\begin{document}
\theoremstyle{plain}
\newtheorem{thm}{Theorem}[section]
\newtheorem*{thm1}{Theorem 1}
\newtheorem*{thm2}{Theorem 2}
\newtheorem{lemma}[thm]{Lemma}
\newtheorem{lem}[thm]{Lemma}
\newtheorem{cor}[thm]{Corollary}
\newtheorem{prop}[thm]{Proposition}
\newtheorem{propose}[thm]{Proposition}
\newtheorem{variant}[thm]{Variant}
\theoremstyle{definition}
\newtheorem{notations}[thm]{Notations}
\newtheorem{rem}[thm]{Remark}
\newtheorem{rmk}[thm]{Remark}
\newtheorem{rmks}[thm]{Remarks}
\newtheorem{defn}[thm]{Definition}
\newtheorem{ex}[thm]{Example}
\newtheorem{claim}[thm]{Claim}
\newtheorem{ass}[thm]{Assumption}
\numberwithin{equation}{section}
\newcounter{elno}                
\def\points{\list
{\hss\llap{\upshape{(\roman{elno})}}}{\usecounter{elno}}} 
\let\endpoints=\endlist


\catcode`\@=11
%
%
\def\opn#1#2{\def#1{\mathop{\kern0pt\fam0#2}\nolimits}} 
\def\bold#1{{\bf #1}}%
\def\underrightarrow{\mathpalette\underrightarrow@}
\def\underrightarrow@#1#2{\vtop{\ialign{$##$\cr
 \hfil#1#2\hfil\cr\noalign{\nointerlineskip}%
 #1{-}\mkern-6mu\cleaders\hbox{$#1\mkern-2mu{-}\mkern-2mu$}\hfill
 \mkern-6mu{\to}\cr}}}
\let\underarrow\underrightarrow
\def\underleftarrow{\mathpalette\underleftarrow@}
\def\underleftarrow@#1#2{\vtop{\ialign{$##$\cr
 \hfil#1#2\hfil\cr\noalign{\nointerlineskip}#1{\leftarrow}\mkern-6mu
 \cleaders\hbox{$#1\mkern-2mu{-}\mkern-2mu$}\hfill
 \mkern-6mu{-}\cr}}}
%
%

%
\def\:{\colon}
\let\oldtilde=\tilde
\def\tilde#1{\mathchoice{\widetilde{#1}}{\widetilde{#1}}%
{\indextil{#1}}{\oldtilde{#1}}}
\def\indextil#1{\lower2pt\hbox{$\textstyle{\oldtilde{\raise2pt%
\hbox{$\scriptstyle{#1}$}}}$}}
\def\pnt{{\raise1.1pt\hbox{$\textstyle.$}}}
%

%
\let\amp@rs@nd@\relax
\newdimen\ex@\ex@.2326ex
\newdimen\bigaw@
\newdimen\minaw@
\minaw@16.08739\ex@
\newdimen\minCDaw@
\minCDaw@2.5pc
\newif\ifCD@
\def\minCDarrowwidth#1{\minCDaw@#1}
\newenvironment{CD}{\@CD}{\@endCD}
\def\@CD{\def\A##1A##2A{\llap{$\vcenter{\hbox
 {$\scriptstyle##1$}}$}\Big\uparrow\rlap{$\vcenter{\hbox{%
$\scriptstyle##2$}}$}&&}%
\def\V##1V##2V{\llap{$\vcenter{\hbox
 {$\scriptstyle##1$}}$}\Big\downarrow\rlap{$\vcenter{\hbox{%
$\scriptstyle##2$}}$}&&}%
\def\={&\hskip.5em\mathrel
 {\vbox{\hrule width\minCDaw@\vskip3\ex@\hrule width
 \minCDaw@}}\hskip.5em&}%
\def\verteq{\Big\Vert&&}%
\def\noarr{&&}%
\def\vspace##1{\noalign{\vskip##1\relax}}\relax\let\amp@rs@nd@&\iffalse}\fi
 \CD@true\vcenter\bgroup\relax\let\\=\cr\iffalse}\fi\tabskip\z@skip\baselineskip20\ex@
 \lineskip3\ex@\lineskiplimit3\ex@\halign\bgroup
 &\hfill$\m@th##$\hfill\cr}
\def\@endCD{\cr\egroup\egroup}
%
\def\>#1>#2>{\amp@rs@nd@\setbox\z@\hbox{$\scriptstyle
 \;{#1}\;\;$}\setbox\@ne\hbox{$\scriptstyle\;{#2}\;\;$}\setbox\tw@
 \hbox{$#2$}\ifCD@
 \global\bigaw@\minCDaw@\else\global\bigaw@\minaw@\fi
 \ifdim\wd\z@>\bigaw@\global\bigaw@\wd\z@\fi
 \ifdim\wd\@ne>\bigaw@\global\bigaw@\wd\@ne\fi
 \ifCD@\hskip.5em\fi
 \ifdim\wd\tw@>\z@
 \mathrel{\mathop{\hbox to\bigaw@{\rightarrowfill}}\limits^{#1}_{#2}}\else
 \mathrel{\mathop{\hbox to\bigaw@{\rightarrowfill}}\limits^{#1}}\fi
 \ifCD@\hskip.5em\fi\amp@rs@nd@}
\def\<#1<#2<{\amp@rs@nd@\setbox\z@\hbox{$\scriptstyle
 \;\;{#1}\;$}\setbox\@ne\hbox{$\scriptstyle\;\;{#2}\;$}\setbox\tw@
 \hbox{$#2$}\ifCD@
 \global\bigaw@\minCDaw@\else\global\bigaw@\minaw@\fi
 \ifdim\wd\z@>\bigaw@\global\bigaw@\wd\z@\fi
 \ifdim\wd\@ne>\bigaw@\global\bigaw@\wd\@ne\fi
 \ifCD@\hskip.5em\fi
 \ifdim\wd\tw@>\z@
 \mathrel{\mathop{\hbox to\bigaw@{\leftarrowfill}}\limits^{#1}_{#2}}\else
 \mathrel{\mathop{\hbox to\bigaw@{\leftarrowfill}}\limits^{#1}}\fi
 \ifCD@\hskip.5em\fi\amp@rs@nd@}
%
%
\newenvironment{CDS}{\@CDS}{\@endCDS}
\def\@CDS{\def\A##1A##2A{\llap{$\vcenter{\hbox
 {$\scriptstyle##1$}}$}\Big\uparrow\rlap{$\vcenter{\hbox{%
$\scriptstyle##2$}}$}&}%
\def\V##1V##2V{\llap{$\vcenter{\hbox
 {$\scriptstyle##1$}}$}\Big\downarrow\rlap{$\vcenter{\hbox{%
$\scriptstyle##2$}}$}&}%
\def\={&\hskip.5em\mathrel
 {\vbox{\hrule width\minCDaw@\vskip3\ex@\hrule width
 \minCDaw@}}\hskip.5em&}
\def\verteq{\Big\Vert&}
\def\novarr{&}
\def\noharr{&&}
\def\SE##1E##2E{\slantedarrow(0,18)(4,-3){##1}{##2}&}
\def\SW##1W##2W{\slantedarrow(24,18)(-4,-3){##1}{##2}&}
\def\NE##1E##2E{\slantedarrow(0,0)(4,3){##1}{##2}&}
\def\NW##1W##2W{\slantedarrow(24,0)(-4,3){##1}{##2}&}
\def\slantedarrow(##1)(##2)##3##4{%
\thinlines\unitlength1pt\lower 6.5pt\hbox{\begin{picture}(24,18)%
\put(##1){\vector(##2){24}}%
\put(0,8){$\scriptstyle##3$}%
\put(20,8){$\scriptstyle##4$}%
\end{picture}}}
\def\vspace##1{\noalign{\vskip##1\relax}}\relax\let\amp@rs@nd@&\iffalse}\fi
 \CD@true\vcenter\bgroup\relax\let\\=\cr\iffalse}\fi\tabskip\z@skip\baselineskip20\ex@
 \lineskip3\ex@\lineskiplimit3\ex@\halign\bgroup
 &\hfill$\m@th##$\hfill\cr}
\def\@endCDS{\cr\egroup\egroup}
%
\newdimen\TriCDarrw@
\newif\ifTriV@
\newenvironment{TriCDV}{\@TriCDV}{\@endTriCD}
\newenvironment{TriCDA}{\@TriCDA}{\@endTriCD}
\def\@TriCDV{\TriV@true\def\TriCDpos@{6}\@TriCD}
\def\@TriCDA{\TriV@false\def\TriCDpos@{10}\@TriCD}
\def\@TriCD#1#2#3#4#5#6{%
\setbox0\hbox{$\ifTriV@#6\else#1\fi$}
\TriCDarrw@=\wd0 \advance\TriCDarrw@ 24pt
\advance\TriCDarrw@ -1em
\def\SE##1E##2E{\slantedarrow(0,18)(2,-3){##1}{##2}&}
\def\SW##1W##2W{\slantedarrow(12,18)(-2,-3){##1}{##2}&}
\def\NE##1E##2E{\slantedarrow(0,0)(2,3){##1}{##2}&}
\def\NW##1W##2W{\slantedarrow(12,0)(-2,3){##1}{##2}&}
\def\slantedarrow(##1)(##2)##3##4{\thinlines\unitlength1pt
\lower 6.5pt\hbox{\begin{picture}(12,18)%
\put(##1){\vector(##2){12}}%
\put(-4,\TriCDpos@){$\scriptstyle##3$}%
\put(12,\TriCDpos@){$\scriptstyle##4$}%
\end{picture}}}
\def\={\mathrel {\vbox{\hrule
   width\TriCDarrw@\vskip3\ex@\hrule width
   \TriCDarrw@}}}
\def\>##1>>{\setbox\z@\hbox{$\scriptstyle
 \;{##1}\;\;$}\global\bigaw@\TriCDarrw@
 \ifdim\wd\z@>\bigaw@\global\bigaw@\wd\z@\fi
 \hskip.5em
 \mathrel{\mathop{\hbox to \TriCDarrw@
{\rightarrowfill}}\limits^{##1}}
 \hskip.5em}
\def\<##1<<{\setbox\z@\hbox{$\scriptstyle
 \;{##1}\;\;$}\global\bigaw@\TriCDarrw@
 \ifdim\wd\z@>\bigaw@\global\bigaw@\wd\z@\fi
 \mathrel{\mathop{\hbox to\bigaw@{\leftarrowfill}}\limits^{##1}}
 }
 \CD@true\vcenter\bgroup\relax\let\\=\cr\iffalse}\fi
 \tabskip\z@skip\baselineskip20\ex@
 \lineskip3\ex@\lineskiplimit3\ex@
 \ifTriV@
 \halign\bgroup
 &\hfill$\m@th##$\hfill\cr
#1&\multispan3\hfill$#2$\hfill&#3\\
&#4&#5\\
&&#6\cr\egroup%
\else
 \halign\bgroup
 &\hfill$\m@th##$\hfill\cr
&&#1\\%
&#2&#3\\
#4&\multispan3\hfill$#5$\hfill&#6\cr\egroup
\fi}
\def\@endTriCD{\egroup} 
\newcommand{\mc}{\mathcal} 
\newcommand{\mb}{\mathbb} 
\newcommand{\surj}{\twoheadrightarrow} 
\newcommand{\inj}{\hookrightarrow} \newcommand{\zar}{{\rm zar}} 
\newcommand{\an}{{\rm an}} \newcommand{\red}{{\rm red}} 
\newcommand{\Rank}{{\rm rk}} \newcommand{\codim}{{\rm codim}} 
\newcommand{\rank}{{\rm rank}} \newcommand{\Ker}{{\rm Ker \ }} 
\newcommand{\Pic}{{\rm Pic}} \newcommand{\Div}{{\rm Div}} 
\newcommand{\Hom}{{\rm Hom}} \newcommand{\im}{{\rm im}} 
\newcommand{\Spec}{{\rm Spec \,}} \newcommand{\Sing}{{\rm Sing}} 
\newcommand{\sing}{{\rm sing}} \newcommand{\reg}{{\rm reg}} 
\newcommand{\Char}{{\rm char}} \newcommand{\Tr}{{\rm Tr}} 
\newcommand{\Gal}{{\rm Gal}} \newcommand{\Min}{{\rm Min \ }} 
\newcommand{\Max}{{\rm Max \ }} \newcommand{\Alb}{{\rm Alb}\,} 
\newcommand{\GL}{{\rm GL}\,} 
\newcommand{\ie}{{\it i.e.\/},\ } \newcommand{\niso}{\not\cong} 
\newcommand{\nin}{\not\in} 
\newcommand{\soplus}[1]{\stackrel{#1}{\oplus}} 
\newcommand{\by}[1]{\stackrel{#1}{\rightarrow}} 
\newcommand{\longby}[1]{\stackrel{#1}{\longrightarrow}} 
\newcommand{\vlongby}[1]{\stackrel{#1}{\mbox{\large{$\longrightarrow$}}}} 
\newcommand{\ldownarrow}{\mbox{\Large{\Large{$\downarrow$}}}} 
\newcommand{\lsearrow}{\mbox{\Large{$\searrow$}}} 
\renewcommand{\d}{\stackrel{\mbox{\scriptsize{$\bullet$}}}{}} 
\newcommand{\dlog}{{\rm dlog}\,} 
\newcommand{\longto}{\longrightarrow} 
\newcommand{\vlongto}{\mbox{{\Large{$\longto$}}}} 
\newcommand{\limdir}[1]{{\displaystyle{\mathop{\rm lim}_{\buildrel\longrightarrow\over{#1}}}}\,} 
\newcommand{\liminv}[1]{{\displaystyle{\mathop{\rm lim}_{\buildrel\longleftarrow\over{#1}}}}\,} 
\newcommand{\norm}[1]{\mbox{$\parallel{#1}\parallel$}} 
\newcommand{\boxtensor}{{\Box\kern-9.03pt\raise1.42pt\hbox{$\times$}}} 
\newcommand{\into}{\hookrightarrow} \newcommand{\image}{{\rm image}\,} 
\newcommand{\Lie}{{\rm Lie}\,} 
\newcommand{\CM}{\rm CM}
\newcommand{\sext}{\mbox{${\mathcal E}xt\,$}} 
\newcommand{\shom}{\mbox{${\mathcal H}om\,$}} 
\newcommand{\coker}{{\rm coker}\,} 
\newcommand{\sm}{{\rm sm}} 
\newcommand{\tensor}{\otimes} 
\renewcommand{\iff}{\mbox{ $\Longleftrightarrow$ }} 
\newcommand{\supp}{{\rm supp}\,} 
\newcommand{\ext}[1]{\stackrel{#1}{\wedge}} 
\newcommand{\onto}{\mbox{$\,\>>>\hspace{-.5cm}\to\hspace{.15cm}$}} 
\newcommand{\propsubset} {\mbox{$\textstyle{ 
\subseteq_{\kern-5pt\raise-1pt\hbox{\mbox{\tiny{$/$}}}}}$}} 
\newcommand{\sA}{{\mathcal A}} 
\newcommand{\sB}{{\mathcal B}} \newcommand{\sC}{{\mathcal C}} 
\newcommand{\sD}{{\mathcal D}} \newcommand{\sE}{{\mathcal E}} 
\newcommand{\sF}{{\mathcal F}} \newcommand{\sG}{{\mathcal G}} 
\newcommand{\sH}{{\mathcal H}} \newcommand{\sI}{{\mathcal I}} 
\newcommand{\sJ}{{\mathcal J}} \newcommand{\sK}{{\mathcal K}} 
\newcommand{\sL}{{\mathcal L}} \newcommand{\sM}{{\mathcal M}} 
\newcommand{\sN}{{\mathcal N}} \newcommand{\sO}{{\mathcal O}} 
\newcommand{\sP}{{\mathcal P}} \newcommand{\sQ}{{\mathcal Q}} 
\newcommand{\sR}{{\mathcal R}} \newcommand{\sS}{{\mathcal S}} 
\newcommand{\sT}{{\mathcal T}} \newcommand{\sU}{{\mathcal U}} 
\newcommand{\sV}{{\mathcal V}} \newcommand{\sW}{{\mathcal W}} 
\newcommand{\sX}{{\mathcal X}} \newcommand{\sY}{{\mathcal Y}} 
\newcommand{\sZ}{{\mathcal Z}} \newcommand{\ccL}{\sL} 
 \newcommand{\A}{{\mathbb A}} \newcommand{\B}{{\mathbb 
B}} \newcommand{\C}{{\mathbb C}} \newcommand{\D}{{\mathbb D}} 
\newcommand{\E}{{\mathbb E}} \newcommand{\F}{{\mathbb F}} 
\newcommand{\G}{{\mathbb G}} \newcommand{\HH}{{\mathbb H}} 
\newcommand{\I}{{\mathbb I}} \newcommand{\J}{{\mathbb J}} 
\newcommand{\M}{{\mathbb M}} \newcommand{\N}{{\mathbb N}} 
\renewcommand{\P}{{\mathbb P}} \newcommand{\Q}{{\mathbb Q}} 

\newcommand{\R}{{\mathbb R}} \newcommand{\T}{{\mathbb T}} 
\newcommand{\U}{{\mathbb U}} \newcommand{\V}{{\mathbb V}} 
\newcommand{\W}{{\mathbb W}} \newcommand{\X}{{\mathbb X}} 
\newcommand{\Y}{{\mathbb Y}} \newcommand{\Z}{{\mathbb Z}} 
\title[Hilbert-Kunz density function and Hilbert-Kunz multiplicity] 
{Hilbert-Kunz density function and Hilbert-Kunz multiplicity} 
\author{V. Trivedi} 
\address{School of Mathematics, Tata Institute of Fundamental Research, 
Homi Bhabha Road, Mumbai-400005, India} 
\email{vija@math.tifr.res.in} 
\date{} 

\begin{abstract} For a pair $(M, I)$, where $M$ is finitely generated
 graded module over a standard graded ring $R$ of dimension $d$, and $I$
 is a graded ideal with $\ell(R/I) < \infty$, we introduce a new invariant
 $HKd(M, I)$ called the {\em Hilbert-Kunz density function}. 
In Theorem~\ref{t1}, we relate this to the Hilbert-Kunz multiplicity 
$e_{HK}(M, I)$ by an integral formula.  

We prove that the Hilbert-Kunz density function is additive. Moreover it
satisfies a multiplicative formula for a Segre product of rings. This gives a 
formula for $e_{HK}$ of the Segre product of rings in terms of the HKd of 
the rings involved. As a corollary, $e_{HK}$ of  
the Segre product of any finite number of  projective
 curves is a rational number. 
 \end{abstract}

\subjclass[2010]{13D40, 14H60, 14J60, 13H15}
\keywords{Hilbert-Kunz density, Hilbert-Kunz multiplicity}
\maketitle \section{Introduction}

Let $R$ be a  Noetherian ring of prime characteristic
$p >0$ and of dimension $d$ and let
$I\subseteq R$ be an ideal of finite colength. Then we recall that
the Hilbert-Kunz
multiplicity of $R$ with
respect to $I$ is defined as
$$e_{HK}(R, I) = \lim_{n\to \infty}\frac{\ell(R/I^{[p^n]})}{p^{nd}},$$
where
$ I^{[p^n]} = n$-th Frobenius power of $I$
= the ideal generated by $p^n$-th powers of elements of $I$. This   
 is an ideal of finite colength and  $\ell(R/I^{[p^n]})$ denotes the
length of
the
$R$-module $R/I^{[p^n]}$. Existence of the limit was proved by 
Monsky [M]. Though this invariant has been extensively 
studied, over the years (see the survey article [Hu]), it has been difficult to
handle  (even in the graded case) as various standard techniques,
 used for studying multiplicities, are not applicable for the invariant 
$e_{HK}$.

Here  we introduce a new invariant for a
  pair $(M, I)$, where  $R$ 
 is a Noetherian standard graded ring  of dimension $d$ over a perfect field
 $k$ of $\Char~p >0$, $I$ is 
a homogeneous ideal of $R$ such that $\ell(R/I)< \infty$, and $M$   
 is  a finitely generated non-negatively graded
 $R$-module.

This invariant for a pair $(M, I)$, which we call the
 {\it Hilbert-Kunz density function} of $(M, I)$, is a compactly supported 
function
$$HKd(M, I):\R\longrightarrow\R,$$ given by 
$$HKd(M, I)(x) = f(x) = \lim_{n\to \infty}g_n(x),$$ 
 where  $g_n:\R\rightarrow \R$ is given in Notations~\ref{n2}.
We show that this limit makes sense and in fact 
$$HKd(M, I)(x) = f(x) = \lim_{n\to \infty}g_n(x) = \lim_{n\to 
\infty}f_n(x),$$
where $f_n(x) = (1/q^{d-1})\ell(M/I^{[q]}M)_{\lfloor xq\rfloor}$.
More precisely we prove the following theorem, which also
relates the Hilbert-Kunz multiplicity with the Hilbert-Kunz 
density function.

\begin{thm}\label{t1}If $R$ is of dimension $\geq 2$ then 
each $g_n:\R\longto \R$ is a compactly 
supported, piecewise linear continuous function such that
$\{g_n\}_{n\in \N}$ is a uniformly convergent sequence. If 
$\lim_{n\to \infty} g_n(x) = f(x)$, then $f(x)$ is a compactly 
supported continuous function, and  
$$e_{HK}(M, I) = \int_{\R}f(x)dx. $$\end{thm}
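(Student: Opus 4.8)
The plan is to work with the normalized Hilbert functions attached to the Frobenius powers and show that, after the right rescaling, they stabilize to a continuous function whose integral recovers $e_{HK}$. First I would fix a set of homogeneous generators of $I$ of degrees $d_1,\dots,d_s$ and, for each $q=p^n$, consider the graded module $M/I^{[q]}M$. Its length in degree $m$, divided by $q^{d-1}$, with $m=\lfloor xq\rfloor$, defines $f_n(x)$; the function $g_n$ of Notations~\ref{n2} is (I expect) a piecewise-linear interpolation of the $f_n$ values, so that $g_n$ is genuinely continuous and compactly supported, the support lying in a fixed interval $[0,C]$ because $I^{[q]}M$ agrees with $M$ in all large degrees $\gtrsim (\max d_i)q$. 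Continuity and piecewise linearity of each $g_n$ is then immediate from the construction; the content is the \emph{uniform} convergence of $\{g_n\}$.

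The key step is a uniform Cauchy estimate: I would compare $g_n$ and $g_{n+1}$ by relating $M/I^{[pq]}M$ to $M/I^{[q]}M$ via the Frobenius. Writing $F$ for the Frobenius functor, one has $I^{[pq]}M = F^{\,}(I^{[q]})\cdot$-type relations; more concretely, tensoring the presentation $R(-d_i)^{\oplus}\to R\to R/I\to 0$ up the Frobenius and applying it to $M$, the module $M/I^{[pq]}M$ is built from $M/I^{[q]}M$ by a base change along the degree-multiplication-by-$p$ map together with a correction supported on a module of dimension $\le d-1$ (the syzygies). This is where the hypothesis $d\ge 2$ enters: the ``error'' modules have dimension at most $d-1$, so their contribution to the length in a single degree is $O(q^{d-2})$, which after dividing by $q^{d-1}$ tends to $0$ uniformly in $x$. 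Quantitatively I would aim for a bound of the shape
\[
\sup_{x\in\R}\,|g_{n+1}(x)-g_n(x)| \;\le\; \frac{C'}{q},
\]
with $C'$ independent of $n$, coming from a uniform bound on the number of generators and on the regularity of the relevant Tor/syzygy modules. Summing the geometric-type series gives a uniform limit $f$, which is then automatically continuous (uniform limit of continuous functions) and compactly supported in the same fixed interval $[0,C]$.

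For the integral formula, I would compute
\[
\int_{\R} g_n(x)\,dx \;=\; \frac{1}{q^{d}}\sum_{m\ge 0}\ell\big(M/I^{[q]}M\big)_m \;+\; (\text{interpolation error})\;=\;\frac{\ell(M/I^{[q]}M)}{q^{d}} + o(1),
\]
the Riemann-sum identity holding because the width of each linear piece of $g_n$ is $1/q$ and its value at the nodes is $f_n(m/q)=(1/q^{d-1})\ell(M/I^{[q]}M)_m$; the interpolation error is again controlled by the same dimension-$(d-1)$ estimate. Passing to the limit, uniform convergence lets me exchange limit and integral, so $\int_{\R} f\,dx = \lim_n \ell(M/I^{[q]}M)/q^{d} = e_{HK}(M,I)$, where the last equality is the (module) definition of Hilbert-Kunz multiplicity together with Monsky's existence theorem.

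**Main obstacle.** The delicate point is the uniform-in-$x$ bound $\sup_x|g_{n+1}(x)-g_n(x)|\le C'/q$: one must show that the ``defect'' between $\ell(M/I^{[pq]}M)_{m}$ and the value predicted by rescaling $\ell(M/I^{[q]}M)$ is bounded by a constant times $q^{d-2}$ \emph{uniformly over all degrees $m$ simultaneously}, not just for fixed $x$. This requires a degree-uniform bound on the graded pieces of the syzygy modules of $I^{[q]}$ — effectively a uniform Castelnuovo–Mumford regularity bound for $\mathrm{Tor}_j^R(M, R/I^{[q]})$ growing no faster than linearly in $q$ — which is the technical heart of the argument and where most of the work will go.
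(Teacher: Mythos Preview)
Your overall architecture is right: establish a uniform Cauchy estimate $\sup_x|g_{n+1}(x)-g_n(x)|\le C'/q$, deduce a continuous compactly supported limit, and recover $e_{HK}$ as a Riemann sum. The compact-support and integral-formula parts are essentially as in the paper. The gap is in the mechanism you propose for the Cauchy estimate.

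You write that $M/I^{[pq]}M$ is built from $M/I^{[q]}M$ by ``base change along degree-multiplication-by-$p$'' with a correction supported on the syzygies, which you take to have dimension $\le d-1$. But the syzygy module of $I^{[q]}$ over $R$ is a submodule of a free module and is typically of full dimension $d$; there is no reason for it to drop dimension. Likewise, your proposed route through uniform Castelnuovo--Mumford regularity of $\mathrm{Tor}_j^R(M,R/I^{[q]})$ is not what is actually needed, and a linear-in-$q$ regularity bound alone would not produce the comparison between $f_n$ and $f_{n+1}$ with the correct normalization.

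The paper's argument is geometric and hinges on a different idea. One passes to $X=\mathrm{Proj}\,R$, replaces graded lengths by $\coker\phi_{m,q}(\sQ)$ coming from the syzygy \emph{bundle} $V$ on $X$ (Lemma~\ref{r1}), and then proves the key Lemma~\ref{l4}/\ref{l6}: for $\sN=F_*^s\sM$ on $X_{\mathrm{red}}$ there is a map
\[
\oplus^{p^{d-1}}\sN(-m_2)\longrightarrow F_*\sN
\]
which is an isomorphism at every generic point, so its kernel and cokernel are coherent sheaves supported in dimension $<d-1$. The factor $p^{d-1}$ arises because $F_*\sO_Y$ has generic rank $p^{\dim Y}=p^{d-1}$; this is exactly what makes $p^{d-1}\coker\phi_{m-m_2,q}(\sN)$ match $\coker\phi_{m,q}(F_*\sN)$ up to a lower-order error, and hence $f_{n+s}(x)\approx f_{n+s+1}(x)$. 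The lower-dimensional error sheaves are then controlled by the elementary cohomology bounds of Lemma~\ref{l2} and Lemma~\ref{l22}, giving the $O(q^{d-2})$ contribution you anticipated --- but coming from the kernel/cokernel of this generic isomorphism, not from the syzygies of $I^{[q]}$.

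In short: you have correctly located the crux, but the tool that cracks it is the generic splitting $F_*\sN\sim \sN^{\oplus p^{d-1}}$ on $\mathrm{Proj}\,R$ combined with the projection formula, rather than a regularity bound on Tor modules.
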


We note that the HK density function 
plays the same role as $e_{HK}$ vis-a-vis tight closure, in the graded setup
 (see Remark~\ref{r7}). Also like the $e_{HK}$ multiplicity, the HK density function is 
additive (Proposition~\ref{r77}).

One of the remarkable properties of the HK density function (which also makes
 computations of $e_{HK}$ in various cases simpler, and
 makes them possible in many new cases) is that 
it is `multiplicative' for Segre products. 

In Proposition~\ref{p2} we state and prove  
this  {\it multiplicative formula}.
In particular, we prove the following:

\vspace{10pt}
 
\noindent{\bf Proposition}\quad {\it If   $(R, I)$ and $(S, J)$ are two pairs as above, and if
  $HKd(R, I) = f$ 
and $HKd(S, J) = g$ 
with $\dim~R = d_1$ and $\dim~S = d_2$ then their Segre product satisfies:
$$HKd (R\#S, I\# J)(x)  =  \frac{e_0(R)}{(d_1-1)!}x^{d_1-1}g(x)+
\frac{e_0(S)}{(d_2-1)!}x^{d_2-1}f(x) - f(x)g(x),$$}
Here  $e_0(R)$ denotes the Hilbert-Samuel multiplicity of $R$ with respect
 to its irrelevant maximal ideal.

\vspace{5pt}

This implies that $e_{HK}$ of any finite  Segre products of rings can be written
 in terms of the $HKd$ functions of the rings involved, whereas 
Example~\ref{ex2} suggests that any such `multiplicative formula'
does not hold for
 HK multiplicities.

In Section~3  we compute $HKd(R, I)$, for projective spaces and nonsingular
 projective curves (and  hence of arbitrary Segre products of these). 
Theorem~\ref{t1} then yields formulas for HK multiplicities. We note that 
the HK multiplicity  of a product of $\P^n\times \P^m$ was 
known earlier ([EY]).

In the case of a nonsingular projective curve $X = {\rm Proj}~R$ of
 degree $d$, we can associate its {\it HN data} of a set of rational numbers
$(d, \{r_i\}_i, \{a_i\}_i)$, where 
  (see [B], [T1], for the corresponding study of the HK multiplicity
  in this context)
$\{r_i\}_i$ and
$\{a_i\}_i$ denote, respectively, the ranks and normalized strong
 Harder-Narasimhan
 slopes 
of the associated syzygy bundle $V$ on $X$ {see (\ref{e14}) for details). Then 
 it turns out that  the  density function $HKd(R, {\bf m})$,
 is a piecewise linear
polynomial with rational coefficients, and with 
 points of 
singularites ({\it i.e.}, non-smoothness) precisely at the points
 $\{1-(a_i/d)\}_i$. Moreover $d$ and  and  the set $\{r_i\}_i$ can
 also be easily recovered 
from the density function (see Example~~\ref{ex1}).

 This implies that (since $HKd(R, {\bf m})$ and hence)
the numbers $\{r_i\}$, $\{a_i\}$ are {\it intrinsic invariants} of the pair
$(R, {\bf m})$.

Now,  by Proposition~\ref{p2}, 
the HK density function of a Segre product of $n$ projective 
curves $\{X_j\}_j$, corresponding to the pairs $(R_j, I_j)_j$, is a piecewise degree
 $n$-polynomial with rational coefficients, with the set of singular  points 
$\subseteq  \{1-a_{ij}/{\tilde d_j},d_{ij} \}_i$, where $\{a_{ij}\}_i$ are the
 normalized
strong  HN slopes and ${\tilde d_j}$ is the 
degree of the curves $X_j$ and $d_{ij}$ 
are the degrees of the chosen generators of the ideals $I_j$.
 Hence, by Theorem~\ref{t1},  we
 deduce, as a corollary, 

{\it The  HK multiplicity of the Segre product of any finite number of 
projective curves  is a rational number}.

In Example~\ref{ex2} we write down  
the Hilbert-Kunz density function for the Segre product   
of two  dimensional rings  $(R, {\bf m}_1)$ and $(S,{\bf m}_2)$.
 If  $(d, \{r_i\}_i, \{a_i\}_i)$ and 
$(g, \{s_j\}_j, \{b_j\}_j)$ are the datum associated to 
the pairs $(R, {\bf m}_1)$ and 
and   $(S, {\bf m}_2)$ respectively
then  we deduce that  
 $e_{HK}(R\#S, {\bf m}_1\# {\bf m}_2)$ is a polynomial 
in $\{r_i, a_i/d\}_i$ and
 $\{s_j, b_j/g\}_j$ but the formula for it 
depends on the  
 relative
 positions of the $a_i/d$ and $b_j/g$ on the real line.
 On the other hand 
we know (see [B], [T1]) that
$e_{HK}(R, {\bf m}_1) = d + \sum_ir_ia_i^2/d$ and
 $e_{HK}(S, {\bf m}_2) = g + \sum_js_jb_j^2/g $.

This 
 suggests that unlike the functions such as  multiplicity and 
$HKd$ function,  {\it $e_{HK}$ of a Segre product of rings cannot be determined
in terms of the  $e_{HK}$ of the individual rings alone.}

Overall it seems that 
$HKd$ is relatively easier to calculate (as one is computing a  
`limit' of each graded piece rather than computing a limit of a sum of graded pieces)
on the other hand it carries  more information ({\it e.g.} in the case 
of projective curves, the normalized slopes $\{a_i/d\}$ are precisely the 
 points of singularities of the $HKd$, and 
$\{r_i\}$ also  are  recoverable  from the density function).

 In [T2], we give another application of HK density functions
 to give an approach to $e_{HK}$ in characteristic $0$.

We expect the techniques introduced in this paper to have several other 
interesting  applications as well.

For example in a forthcoming paper [Ma],
 it is shown that the HK density function of a tensor product of 
standard graded rings equals the {\it convolution} of the HK density 
of the factors. 

Recall that the set of compactly supported continuous functions
 $f:\R\longto \R$ are 
in bijective correspondence with the set of their holomorphic 
Fourier transforms ${\hat f}$, where 
${\hat f}(t) = \int_{\R}f(x)e^{-itx}dx$, for $t\in \C$.  
Since HK density functions are compactly supported functions on $\R$, 
 for a pair $(M, I)$, the  HK density function $f = HKd(M, I)$
 corresponds to  
its Fourier transform ${\hat f}$ and moreover ${\hat f}(0) = e_{HK}(M, I)$.
  We also know that the Fourier transform of the 
convolution of two such functions is the pointwise product of their Fourier 
transforms.

In particular, the results in the present paper 
suggest possible applications of techniques from harmonic analysis in the 
study of HK multiplicities; we hope to return to this later. 

One can also ask the following 

\vspace{5pt}

\noindent{\bf Question}\quad 
Can this notion of HK density function be generalized to a 
Noetherian local ring $(R, {\bf m})$, with respect to the ${\bf m}$-adic 
filtration?   

\vspace{5pt}

The paper is organised as follows.
In the second section we prove the main existence theorem, namely Theorem~\ref{t1}.
 In  Lemma~\ref{l2} (which is the heart of the theorem), we prove that the  cohomologies of $n^{th}$ Frobenius 
pull back
 of a locally free sheaf (as in given in  Equation~(\ref{e15}))
twisted by $\sQ(m)$ ($\sQ$ is a coherent sheaf of
 dimension ${\bar d}$) is bounded by a polynomial in $m$, $p^n$
of degree ${\bar d}$ with  invariants of $\sQ$ as the coefficients.

The main theorem is inspired by the   
  philosophy espoused in the survey article [Hu] that
{\it  the map from $R$ to $R^{1/p}$ is essentially the map from $R^{1/q}$ 
to $R^{1/qp}$} (for this we  state and prove a sheaf 
theoretic version in Lemma~\ref{l4}).  

We also look at the case of  dimension $1$ in Theorem~\ref{t*1}, and note that, 
for each $x$, the sequence of functions 
$g_n(x)$ converges pointwise to $f(x)$ but need not converge  uniformly.
 However $\int_{\R}f(x)dx$ still gives the HK multiplicity. 

\vspace{5pt}

The author  thanks the referee for pointing out  that HK density 
function is additive, and also for providing various suggestions which 
greatly improved the exposition of the result. 
.

\section{Main existence theorem}
Throughout the paper, $R$ 
 is a Noetherian standard graded ring  of dimension $d$ over a
perfect field 
$k$ of $\Char~p >0$, $I$ is 
an homogeneous ideal of $R$ such that $\ell(R/I)< \infty$, and $M$   
 is  a finitely generated non-negatively graded
 $R$-module.

\begin{notations}\label{n2}For the pair $(M, I)$ we define a sequence of functions 
$\{g_n:\R\rightarrow \R\}$, as follows: 
Fix $n\in \N$ and denote $q = p^n$.
Let $ x\in \R$ then $x\in [m/q, (m+1)/q)$, for some integer $m$. If
 $x = m/q$ then define 
$$g_n(x) = 1/q^{d-1}\ell(M/I^{[q]}M)_{m},$$

Otherwise, we can write $x= (1-t)m/q + t(m+1)/q$,  
for some unique $t\in [0, 1)$, and then 
we define $$g_n(x) = (1-t)g_n(m/q) + tg_n((m+1)/q).$$ 
Let $\mu \geq \mu(I)$ be a fixed number, where $\mu(I)$ is the minimal 
number of generators of the ideal $I$. 
\end{notations}
\begin{lemma}\label{l1}Each $g_n $ is a compactly supported continuous 
function. Moreover there is a fixed compact set containing 
$\cup_n\mbox{supp}~g_n$.
\end{lemma}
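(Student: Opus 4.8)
\textbf{Proof proposal for Lemma~\ref{l1}.}

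The plan is to reduce everything to controlling the single quantity $\ell(M/I^{[q]}M)_m$ as $m$ and $q=p^n$ vary, and to show two things: (1) for each fixed $n$, the values $g_n(m/q)$ are nonzero only for $m$ in a bounded range $[0, Cq)$ with $C$ independent of $n$, and (2) these values stay bounded (after dividing by $q^{d-1}$). The continuity of each $g_n$ is then immediate, since by construction $g_n$ is obtained by linear interpolation between the points $(m/q, g_n(m/q))$, $m\in\Z$: on each interval $[m/q,(m+1)/q]$ it agrees with an affine function, and the affine pieces match at the endpoints, so $g_n$ is piecewise linear and continuous on all of $\R$. (For $x<0$ all the relevant lengths vanish since $M$ is non-negatively graded, so $g_n\equiv 0$ there.) Compact support and the uniform bound on supports both follow from claim (1).

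For claim (1), the key observation is that $I$ contains some power $\mathbf{m}^N$ of the irrelevant maximal ideal $\mathbf{m}$, because $\ell(R/I)<\infty$. Hence $I^{[q]}\supseteq (\mathbf{m}^N)^{[q]}\supseteq \mathbf{m}^{Nq}$, so $(M/I^{[q]}M)_m$ is a quotient of $(M/\mathbf{m}^{Nq}M)_m$, which vanishes once $m\geq Nq + (\text{top degree of the finitely many generators of }M)$. Writing $\beta$ for the largest degree of a minimal generator of $M$, we get $g_n(m/q)=0$ for $m\geq Nq+\beta$, i.e. for $x\geq N + \beta/q$. Since $\beta/q\leq\beta$ for all $n$, every $\mathrm{supp}~g_n$ is contained in the fixed compact interval $[0, N+\beta+1]$, which gives both the compact support of each $g_n$ and the existence of a fixed compact set containing $\bigcup_n \mathrm{supp}~g_n$.

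It remains only to record (for the later continuity statement, though not strictly needed here) that each individual value $g_n(m/q)$ is finite: $(M/I^{[q]}M)_m$ is a finitely generated module over the Artinian ring $R/\mathbf{m}^{Nq}$ in each graded degree, in fact a finite-dimensional $k$-vector space since $R$ is standard graded and $M$ is finitely generated, so $\ell(M/I^{[q]}M)_m<\infty$ and $g_n(x)\in\R$ for every $x$. Thus $g_n$ is a well-defined real-valued piecewise linear function with support in $[0,N+\beta+1]$.

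The main obstacle here is essentially bookkeeping rather than any deep point: one must be careful that the bound $\mathbf{m}^N\subseteq I$ combined with $I^{[q]}\supseteq\mathbf{m}^{Nq}$ gives a cutoff that is \emph{linear} in $q$, so that after rescaling the variable by $1/q$ the supports do not escape to infinity. The continuity is formal from the interpolation definition, and the only genuine thing to check is that this cutoff $N$ depends on $I$ alone (not on $q$ or $M$), which it does. I would expect the author's proof to be a couple of lines along exactly these lines.
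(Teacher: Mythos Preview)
Your overall strategy is the same as the paper's, and the continuity part is fine. But the chain of inclusions
\[
I^{[q]}\supseteq (\mathbf{m}^N)^{[q]}\supseteq \mathbf{m}^{Nq}
\]
has a false step: the second containment goes the wrong way. In general $(\mathbf{m}^N)^{[q]}\subseteq \mathbf{m}^{Nq}$, not the reverse. For instance, with $\mathbf{m}=(x,y)$ in $k[x,y]$, $N=1$, $q=p$, one has $\mathbf{m}^{[p]}=(x^p,y^p)$ while $\mathbf{m}^p$ contains $x^{p-1}y\notin (x^p,y^p)$. So as written you have not shown $I^{[q]}\supseteq \mathbf{m}^{Nq}$, and the vanishing of $(M/I^{[q]}M)_m$ for $m\geq Nq+\beta$ is unjustified.

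The fix is a one-line pigeonhole argument, and it is exactly what the paper does: if $I$ is generated by $\mu$ elements then $I^{\mu q}\subseteq I^{[q]}$ (any monomial of length $\mu q$ in the generators repeats some generator at least $q$ times). Combined with $\mathbf{m}^{N}\subseteq I$ this gives
\[
R_m\subseteq \mathbf{m}^{N\mu q}=(\mathbf{m}^N)^{\mu q}\subseteq I^{\mu q}\subseteq I^{[q]}\quad\text{for }m\geq N\mu q,
\]
hence $(M/I^{[q]}M)_m=0$ for $m\geq N\mu q+\beta$, and $\mathrm{supp}\,g_n\subseteq[0,N\mu+\beta/q]\subseteq[0,N\mu+\beta]$. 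Your bound was missing the factor $\mu$; once inserted, your argument and the paper's coincide.
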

\begin{proof}The continuity property is obvious.
Let $n_0\in \N$ such that ${\bf m}^{n_0} \subseteq I$, where ${\bf m}$ is the graded 
maximal ideal. Therefore, for 
$m\geq n_0\mu q $, we have $R_m \subseteq ({\bf m}^{n_0})^{\mu q}
\subseteq I^{\mu q} \subseteq I^{[q]}$.
Let $l$ be a positive integer such that 
$R_mM_l = M_{m+l}$, for $m\geq 0$. Then for $m\geq n_0\mu q+l$, we have 
$$M_m = R_{m-l}M_l \subseteq ({\bf m}^{n_0})^{\mu q}M_l \subseteq I^{\mu q}M_l
 \subseteq 
I^{[q]}M_l.$$ This implies
$\ell(M/I^{[q]}M)_{m} = 0$, if $m \geq l+ n_0\mu q$.
Therefore support of $g_n \subseteq [0, (n_0\mu)+l/q]$. \end{proof}

\vspace{10pt}
\begin{rmk}\label{r11} Since replacing $R$ and $M$ by $R\tensor_k{\bar k}$  and 
$M\tensor_k {\bar k}$, the function $g_n:\R\longto \R$ remains unchanged, we can 
asssume without loss of generality that $k$ is algebraically closed.
\end{rmk}

Henceforth we assume that $R$ is a standard graded  ring of dimension $\geq 2$ (unless otherwise stated).
Let $I$ be generated by homogeneous elements, say $h_1,\ldots, h_\mu$ of 
positive  
degrees $d_1, \ldots, d_\mu$ respectively.
 Let $X = {\rm Proj}~R$; then we have an associated  canonical exact
 sequence of locally free sheaves of $\sO_X$-modules
(moreover the sequence is locally split exact). Due to Remark~\ref{r11}, we can 
also assume $k$ is an 
algebraically closed field.

\begin{equation}\label{e2}
0\longto V\longto \oplus_i\sO_X(1-d_i)\longto \sO_X(1)\longto 0,\end{equation}
where $\sO_X(1-d_i)\longto \sO_X(1)$ is given by the multiplication by the 
 element  $h_i$.

For a coherent sheaf $\sQ$ of $\sO_X$-modules we have  
 a long exact sequence of cohomologies
\begin{equation}\label{*}0\longto H^0(X, F^{n*}V\tensor \sQ(m))\longto
 \oplus_iH^0(X, \sQ(q-qd_i+m))\by{\phi_{m,q}(\sQ)} 
H^0(X, \sQ(q+m))\end{equation}

$$ \longto H^1(X, F^{n*}V\tensor\sQ(m))\longto \cdots,$$
for $m\geq 0$, $n\geq 0$ and $q = p^n$. (Here $F^n:X\longto X$ is the $n^{th}$ 
iterated Frobenius map).

\vspace{10pt}

We fix a set of notations used throughout the paper.

\begin{notations}\label{n1}Let $Q = \oplus_{m\geq 0}Q_m$ be a  nonnegatively 
graded finitely generated $R$-module and  let $\sQ$ be the associated coherent 
sheaf of 
$\sO_X$-modules. Therefore
 $Q_m = H^0(X,\sQ(m))$, for $m>>0$. 
\begin{enumerate}

\item  ${\tilde m}\geq 1$
 is the least integer  such that,
$$Q_{m+1} = R_{1}Q_{m}, ~~~\mbox{and}~~~Q_m
 = H^0(X, \sQ(m))~~~\mbox{and}~~~ h^i(X, \sQ(m-i))
 = 0,$$
for all $m\geq {\tilde m}$ and
 for all $i\geq 1$, 
\item ${\bar d} =$  the dimension of the support of $\sQ$ as a sheaf of
$\sO_X$-modules.
\item Let $$m_Q(q) =  {\tilde m} + n_0(\sum_id_i)q,$$
where $h_1, \ldots, h_{\mu}$ are generators of the ideal $I$ of degrees 
$d_1, \ldots, d_{\mu}\geq 1$ respectively,
 and $n_0\geq 1$ such that ${\bf m}^{n_0}\subseteq I$.
\item  We also denote $\dim_k \mbox{Coker}~\phi_{m,q}(\sQ)$ by
 $\coker~\phi_{m,q}(\sQ)$ (see the exact sequence (\ref{*}) above).
\item Let $a_1, \ldots, a_{\bar d}\in H^0(X,\sO_X(1))$ be such that we 
have a short
exact sequence of $\sO_X$-modules
$$0\rightarrow \sQ_i(-1)\by{a_i} \sQ_i \rightarrow \sQ_{i-1}\rightarrow 0,
\quad\mbox{for}\quad 0< i\leq {\bar d},$$
where $\sQ_{\bar d} = \sQ$ and $\sQ_i = \sQ/(a_{{\bar d}}, \ldots,a_{i+1})\sQ$,
for $0\leq i<{\bar d}$, with $\dim~\sQ_i = i$. (Such a sequence of $\{a_i\}_i$
exists, because $k$ is an infinite field, and since any coherent sheaf 
on $X$ has only finitely many associated points). We define
\begin{enumerate}
\item $C_0(\sQ) = h^0(X, \sQ)$, if ${\bar d} = 0$. If  ${\bar d} > 0$ then 
$$C_0(\sQ) = \mbox{min}\{\sum_{ i=0}^{\bar d}h^0(X,\sQ_i)\mid
 a_1,\ldots, a_{\bar d}~~~\mbox{is a}~~~\sQ-\mbox{sequence as above}\},$$

\item $$C_Q = (\mu)\left(h^0(X, \sQ({\tilde m}-1))
+\max\{\ell(Q_0), \ell(Q_1), \ldots, \ell(Q_{{\tilde m}-1})\}\right).$$
\item $$D_Q =  C_0(\sQ)\left[2{\bar d}({\tilde m}+
n_0(\sum_{1}^{\mu} d_i))\right]^{\bar d},$$
where $n_0$ , $\mu$, $d_i$ are given as in (3) above.

\item $$D_1(\sQ) = \mbox{max}\{h^1(X,\sQ), h^1(X,\sQ(1)), \ldots, 
h^1(X,\sQ({\tilde m}-1)) 
\},$$
\item $$D_0(\sQ) = h^0(X,\sQ({\tilde m})) +
2({\bar d} +1) \left(\mbox{max}\{q_0, |q_1|, \ldots, |q_{{\bar d}}|
\}\right),$$
where
$$\chi(X, \sQ(m)) = q_0{{m+{\bar d}}\choose{{\bar d}}}
+ q_1{{m+{\bar d}-1}\choose{{\bar d}-1}}+\cdots +
 q_{\bar d}$$
is the Hilbert polynomial of $\sQ$.
\end{enumerate}
\end{enumerate}
\end{notations}

The following lemma allows us to 
 reduce our various assertions about a graded module to 
assertions about cohomologies of 
the sheaf associated to the graded module.

\begin{lemma}\label{r1}
\begin{enumerate}
\item For  $m+q \geq m_Q(q)$, we have 
  $\coker~\phi_{m,q}(\sQ) = \ell(Q/I^{[q]}Q)_{m+q} = 0$.
\item For all $n\geq 0$ and $m\in \Z$ (where we define 
$Q_m = 0$, for $m<0$),
$$|\coker~\phi_{m,q}(\sQ) - \ell(Q/I^{[q]}Q)_{m+q} |\leq C_Q.$$
\end{enumerate}
\end{lemma}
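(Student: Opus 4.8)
The plan is to relate the graded pieces of $Q/I^{[q]}Q$ to the cokernel of $\phi_{m,q}(\sQ)$ by passing from the module $Q$ to the sheaf $\sQ$, using the fact that the two agree in high degrees. First I would note that, by definition of $\phi_{m,q}(\sQ)$ from the exact sequence~(\ref{*}), the module $(I^{[q]}Q)_{m+q}$ — generated over $R$ by the degree-$q$ elements $h_i^q$ acting on $Q$ — maps onto the image of $\phi_{m,q}(\sQ)$ inside $H^0(X,\sQ(q+m))$ when we identify graded pieces of $Q$ with sections of twists of $\sQ$. Concretely, for $m\geq \tilde m$ (and likewise $m+q-qd_i\geq \tilde m$, i.e. $m+q\geq \tilde m + q\max_i d_i$, which is implied by $m+q\geq m_Q(q)$ since $\sum_i d_i\geq \max_i d_i$ and $n_0\geq 1$) we have $Q_j = H^0(X,\sQ(j))$ for all the relevant indices $j$, so the map $\oplus_i Q_{m+q-qd_i}\to Q_{m+q}$ given by multiplication by $h_i^q$ is literally $\phi_{m,q}(\sQ)$. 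Hence $\coker\phi_{m,q}(\sQ) = \dim_k (Q_{m+q}/\sum_i h_i^q Q_{m+q-qd_i})$.

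For part (1), I would further observe that $\sum_i h_i^q Q_{m+q-qd_i} = (I^{[q]}Q)_{m+q}$ once we are in degrees where $Q$ is generated in degree $\leq \tilde m$: since $Q_{j+1}=R_1 Q_j$ for $j\geq \tilde m$, every element of $Q$ in degree $\geq \tilde m$ lies in $R\cdot Q_{\leq \tilde m}$, and any product $r\cdot q_0$ with $q_0\in Q$ and $r\in I^{[q]}$ of sufficiently high degree can be rewritten, using $\mathbf{m}^{n_0}\subseteq I$, as a combination of the $h_i^q$ times elements of $Q$ — this is exactly the bookkeeping encoded in the bound $m_Q(q)=\tilde m + n_0(\sum_i d_i)q$. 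Finally, Lemma~\ref{l1} (applied to $Q$, $I$) already shows $\ell(Q/I^{[q]}Q)_{m+q}=0$ once $m+q$ exceeds the appropriate threshold, and the same threshold $m_Q(q)$ works for both quantities; so both vanish and they agree.

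For part (2), the point is to bound the discrepancy between $(I^{[q]}Q)_{m+q}$ and its "sheafy" replacement $\sum_i h_i^q Q_{m+q-qd_i}$ uniformly in $m$ and $n$. The two can differ only because (a) in low degrees $Q_j$ and $H^0(X,\sQ(j))$ need not agree, affecting only degrees $j<\tilde m$, and (b) $I^{[q]}Q$ is generated over $R$ by $h_i^q\cdot Q$ in all degrees, not just by $h_i^q\cdot Q_{m+q-qd_i}$ with a degree-$0$ (scalar) coefficient — but any $R$-linear combination $\sum_i r_i h_i^q q_i^0$ with $q_i^0\in Q_{\leq \tilde m -1}$ and $r_i\in R$ can, after moving the $R$-part, be absorbed except for the finitely many "seed" contributions coming from $Q_0,\dots,Q_{\tilde m-1}$ and the finitely many generators $h_1,\dots,h_\mu$. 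Counting these seeds gives exactly the bound $C_Q = \mu\bigl(h^0(X,\sQ(\tilde m-1)) + \max\{\ell(Q_0),\dots,\ell(Q_{\tilde m-1})\}\bigr)$: the factor $\mu$ from the number of generators $h_i$, the $h^0(X,\sQ(\tilde m-1))$ term controlling the gap between $Q$ and sections of $\sQ$ in the range where both are relevant, and the $\max\ell(Q_j)$ term controlling the finitely many low-degree pieces that are not yet "stable." So $|\coker\phi_{m,q}(\sQ) - \ell(Q/I^{[q]}Q)_{m+q}|\leq C_Q$ for all $m,n$.

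The main obstacle I anticipate is the precise bookkeeping in part (2): carefully identifying which degrees and which "extra" $R$-multiples genuinely fail to be captured by the scalar-coefficient map $\phi_{m,q}(\sQ)$, and then verifying that the total count of uncaptured contributions is bounded by $C_Q$ independently of $q=p^n$ — in particular, that the $q$-dependence in the degree shifts $m+q-qd_i$ does not introduce a $q$-dependence in the error. The key mechanism making this work is that $Q$ is generated in a fixed bounded range of degrees ($\leq \tilde m$) and $I$ has a fixed finite set of generators, so the "defect" is supported on a $q$-independent finite amount of data; one should also invoke Remark~\ref{r11} to assume $k$ algebraically closed (hence infinite), which is needed only for the auxiliary constructions but is harmless here.
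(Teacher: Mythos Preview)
Your plan is essentially the same as the paper's: identify $\ell(Q/I^{[q]}Q)_{m+q}$ with the cokernel of the module-level map $\phi_{m,q}(Q):\oplus_i Q_{q-qd_i+m}\to Q_{m+q}$, and then compare that map with the sheaf-level map $\phi_{m,q}(\sQ)$ using that $Q_j = H^0(X,\sQ(j))$ for $j\geq \tilde m$. Part~(1) is exactly as you outline.

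One correction for part~(2): your concern~(b) about ``extra $R$-multiples'' is a red herring. Since $I^{[q]}=(h_1^q,\dots,h_\mu^q)R$, one has $(I^{[q]}Q)_{m+q}=\sum_i h_i^q\,Q_{m+q-qd_i}$ on the nose as $k$-subspaces of $Q_{m+q}$; the image of $\phi_{m,q}(Q)$ is already all of $(I^{[q]}Q)_{m+q}$, with no $R$-combinatorics needed. The entire discrepancy between $\coker\phi_{m,q}(\sQ)$ and $\ell(Q/I^{[q]}Q)_{m+q}$ comes from your point~(a) alone: the possible mismatch between $Q_j$ and $H^0(X,\sQ(j))$ for $j<\tilde m$. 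The paper handles this by a clean two-case split: if $m+q<\tilde m$, bound each quantity separately by $h^0(X,\sQ(\tilde m-1))+\max_j\ell(Q_j)$; if $m+q\geq \tilde m$, the targets agree, and the images can differ only through those indices $i$ with $q-qd_i+m<\tilde m$, each contributing at most $h^0(X,\sQ(\tilde m-1))+\max_j\ell(Q_j)$. Summing over the $\mu$ generators yields $C_Q$. So the ``main obstacle'' you anticipate dissolves once you drop~(b); the bound is then a straightforward case count, and no appeal to Remark~\ref{r11} is needed for this lemma.
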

\begin{proof}
For given $q=p^n$ and $m\geq 0 $, let 
 $\phi_{m, q}(Q):\oplus_iQ_{q-qd_i+m}\longrightarrow Q_{m+q}$ be the map such
 that  $Q_{q-qd_i+m}\rightarrow Q_{m+q}$ is given by 
 multiplication by
the element $h_i^q$. 
\vspace{5pt}

\noindent~(1)~~ To prove the first assertion note that  
$$m_Q(q) =  {\tilde m} + n_0(\sum_{i=1}^\mu d_iq) \geq 
{\tilde m}+d_iq \implies 
 q-qd_i+m\geq {\tilde m},$$
 for all $i$.
Hence the map $\phi_{m, q}(Q)$ is the map $\phi_{m, q}(\sQ)$ and 
therefore, $\coker~\phi_{m,q}(\sQ) = \ell(Q/I^{[q]}Q)_{m+q}$. Now, 
by the proof of Lemma~\ref{l1}, we have $\ell(Q/I^{[q]}Q)_{m+q} = 0$, as 
${m+q} \geq 
{\tilde m}+n_0\mu q$, since $\sum_id_i\geq\mu$.

\vspace{5pt}

\noindent~(2)\quad Note that
 $h^0(X, \sQ(t))\leq h^0(X, \sQ({\tilde m}-1))$, for all $t\leq  {\tilde m}-1$.
 
If $m+q < {\tilde m}$, then
$$|\coker~\phi_{m,q}(\sQ) - \ell(Q/I^{[q]}Q)_{m+q} |\leq h^0(X, \sQ(m+q))+
\ell(Q_{m+q})$$
$$ \leq h^0(X, \sQ({\tilde m}-1))+\mbox{max}\{\ell(Q_0),\ell(Q_1), 
\ldots, \ell(Q_{{\tilde m}-1})\}.$$
If $m+q \geq {\tilde m}$, then $h^0(X, \sQ(m+q))= \ell(Q_{m+q})$ and therefore 
$$|\coker~\phi_{m,q}(\sQ) - \ell(Q/I^{[q]}Q)_{m+q}| \leq 
|\sum_i\ell(\phi_{m,q}(Q)(Q_{q-qd_i+m}))-\ell(\phi_{m,q}
(\sQ)(H^0(X, \sQ(q-qd_i+m)))|.$$
Now, if $q-qd_i+m < 0$ then $Q_{q-qd_i+m} = 0$, and  $h^0(X, \sQ(q-qd_i+m)) \leq h^0(X, \sQ)$.
If  $q-qd_i+m \geq {\tilde m}$ then $Q_{q-qd_i+m} =  H^0(X, \sQ(q-qd_i+m))$.
This implies that 
 $$|\coker~\phi_{m,q}(\sQ) - \ell(Q/I^{[q]}Q)_{m+q} |\leq 
(\mu)\left(h^0(X, \sQ({\tilde m}-1))
+\max\{\ell(Q_0), \ell(Q_1), \ldots, \ell(Q_{{\tilde m}-1})\}\right).$$
Therefore $|\coker~\phi_{m,q}(\sQ) - \ell(Q/I^{[q]}Q)_{m+q}|
\leq C_Q$. This proves the second assertion.
\end{proof}

\begin{lemma}\label{l2} Let  $\sQ$ be  a coherent sheaf of $\sO_X$-modules of
 dimension ${\bar d}$.  Let $P$ and $P''$ be  locally-free sheaves  of $\sO_X$-modules
 which fit into a short exact sequence of locally-free
 sheaves of $\sO_X$-modules, where $b_i\geq 0$, 
\begin{equation}
\label{e15}0\longto P\longto  \oplus_{i} \sO_{X}(-b_i)
\longto P''\longto 0,~~~~\mbox{where}~~~ b_i 
\geq 0.\end{equation}
Then, for ${\tilde \mu} = \Rank(P)+ \Rank(P'')$, the following hold. 
\begin{enumerate}
\item
$$h^0(F^{n*}P\tensor \sQ(m)) \leq ({\tilde \mu}){C_{0}}(\sQ)(m^{{\bar d}}+1),
~~~~\mbox{for all}~~~ n, m \geq 0.$$
\item For each  $q = p^n$, let  ${m}_n\geq 0$ be an integer with the 
property  that, for all $i\geq 1$ and $m\geq {m}_n$, we have 
$h^i(X, F^{n*}P\tensor \sQ(m)) = 0$; then 
$$h^1(X, F^{n*}P\tensor \sQ(m)) \leq ({\tilde \mu})C_{0}(\sQ)(2{m}_n{\bar d})^{\bar d}, 
~~~\mbox{for all}~~~n, m \geq 0.$$ 
\item Moreover,
for all  $n, m\geq 0$, we have 
$$h^0(X, \sQ(m+q))\leq D_{0}(\sQ)(m+q)^{\bar d}
~~~\mbox{and}~~~h^1(X,\sQ(m))\leq D_{1}(\sQ).$$
\end{enumerate}
\end{lemma}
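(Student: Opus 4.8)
The plan is to prove the three bounds by induction on $\bar d = \dim \sQ$, using the short exact sequence (\ref{e15}) to reduce statements about $F^{n*}P$ and $F^{n*}P''$ to statements about the trivial-type bundle $\oplus_i \sO_X(-b_i)$ in the middle, and using the hyperplane-section sequences $0 \to \sQ_i(-1) \xrightarrow{a_i} \sQ_i \to \sQ_{i-1} \to 0$ (from Notations \ref{n1}(5)) to reduce from dimension $\bar d$ to dimension $\bar d - 1$. First I would handle the base case $\bar d = 0$: here $\sQ$ is supported on finitely many points, $h^0(X, F^{n*}P \tensor \sQ(m)) = \Rank(F^{n*}P) \cdot h^0(X,\sQ) = \Rank(P)\cdot C_0(\sQ)$ is independent of $m$ and $n$ (twisting by $\sO_X(m)$ and pulling back by Frobenius do nothing to a skyscraper's length, and locally $F^{n*}P$ is free of rank $\Rank P \le \tilde\mu$), and $h^1(X, F^{n*}P \tensor \sQ(m)) = 0$, so (1) and (2) hold trivially with room to spare.

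For the inductive step on (1): apply $F^{n*}(-) \tensor \sQ(m)$ to (\ref{e15}). Since (\ref{e15}) is a sequence of locally free sheaves it stays exact after these operations, giving
$$0 \longto F^{n*}P \tensor \sQ(m) \longto \oplus_i \sQ(m - qb_i) \longto F^{n*}P'' \tensor \sQ(m) \longto 0,$$
where $q = p^n$. Hence $h^0(F^{n*}P \tensor \sQ(m)) \le \sum_i h^0(\sQ(m - qb_i)) \le \sum_i h^0(\sQ(m))$ since $b_i \ge 0$, so it suffices to bound $h^0(X,\sQ(m))$ by $C_0(\sQ) \cdot (m^{\bar d}+1)$ times a rank factor; by symmetry the same works for $P''$ in place of $P$. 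To bound $h^0(X,\sQ(m))$ I would use the sequence $0 \to \sQ(m-1) \xrightarrow{a_{\bar d}} \sQ(m) \to \sQ_{\bar d - 1}(m) \to 0$ attaining the minimum in the definition of $C_0(\sQ)$, giving $h^0(\sQ(m)) \le h^0(\sQ(m-1)) + h^0(\sQ_{\bar d-1}(m))$; iterating down from $m$ to $0$ and applying the induction hypothesis to the $(\bar d-1)$-dimensional sheaves $\sQ_{\bar d-1}$ (with their own optimal chains, which is exactly why $C_0$ is defined as a sum over the whole chain and as a min) telescopes to the claimed polynomial bound, the $\tilde\mu$ factor absorbing the ranks of $P$ and $P''$.

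For (2), the given sheaf $F^{n*}P \tensor \sQ(m)$ has $h^i = 0$ for $i \ge 1$ and $m \ge m_n$; I would run the hyperplane-section induction in the other direction. From $0 \to \sQ_{\bar d - 1} \to \sQ(1) \to \sQ(1)/\sQ_{\bar d-1} \cdots$ — more precisely, twisting the chain sequences and tensoring with $F^{n*}P$ — one gets $h^1(F^{n*}P \tensor \sQ(m)) \le h^1(F^{n*}P \tensor \sQ(m+1)) + h^0(F^{n*}P \tensor \sQ_{\bar d-1}(m+1))$, so summing from $m$ up to $m_n$ (where $h^1$ vanishes) and plugging in part (1) applied to the $(\bar d-1)$-dimensional sheaves yields a bound of the shape $(\tilde\mu) C_0(\sQ) \cdot m_n \cdot (\text{poly in } m_n \text{ of degree } \bar d - 1)$, which is $\le (\tilde\mu)C_0(\sQ)(2m_n \bar d)^{\bar d}$ after absorbing constants. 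Part (3) is the easiest: $h^1(X,\sQ(m)) \le D_1(\sQ)$ for all $m \ge 0$ follows because $h^1$ vanishes for $m \ge \tilde m$ by Notations \ref{n1}(1) and $D_1$ is defined as the max over $0 \le m \le \tilde m - 1$; and $h^0(X,\sQ(m+q)) = \chi(X,\sQ(m+q))$ for $m + q \ge \tilde m$, which is a polynomial of degree $\bar d$ in $m+q$ with coefficients controlled by the $q_i$, so it is $\le D_0(\sQ)(m+q)^{\bar d}$, while for $m+q < \tilde m$ the value is $\le h^0(X,\sQ(\tilde m))$, also dominated by $D_0(\sQ)$.

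The main obstacle I anticipate is bookkeeping rather than conceptual: one must choose the chains $\{a_i\}$ in the hyperplane-section sequences consistently with (or at least compatibly with) the ones realizing $C_0(\sQ)$, verify that the induction hypothesis for dimension $\bar d - 1$ can be applied to each $\sQ_j$ appearing (so the $C_0$ of the subquotients are controlled by $C_0(\sQ)$, which is precisely built into the "min of a sum over the chain" definition), and track that the rank factor never exceeds $\tilde\mu = \Rank P + \Rank P''$ through the long exact sequences — in particular that the contribution of $F^{n*}P''$ is handled by the same sequence (\ref{e15}) rather than requiring a separate resolution. Once the definitions in Notations \ref{n1} are unwound, each inequality is a telescoping sum, and the powers of $2$ and $\bar d$ in $D_Q$, $C_0$ are exactly the slack needed to close the induction.
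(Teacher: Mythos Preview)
Your proposal is correct and follows essentially the same approach as the paper: use the sequence (\ref{e15}) tensored with $\sQ_i(m)$ to bound $h^0(F^{n*}P\otimes\sQ_i(m))$ in terms of $h^0(\sQ_i(\cdot))$, then run the hyperplane-section induction along the chain $\sQ_{\bar d},\sQ_{\bar d-1},\ldots,\sQ_0$ to telescope both the $h^0$ bound (part (1)) and, in the reverse direction up to $m_n$, the $h^1$ bound (part (2)); part (3) is read off from the definitions. The only cosmetic difference is packaging: the paper applies (\ref{e15}) once at $m=0$ to get $h^0(F^{n*}P\otimes\sQ_i)\le\tilde\mu\,h^0(\sQ_i)$ and then inducts on the index $i$ with $F^{n*}P$ still present, whereas you apply (\ref{e15}) at every twist to strip off $F^{n*}P$ immediately and then induct on $\bar d$ for $h^0(\sQ(m))$ alone---both routes yield the same telescoping sums and the same constants.
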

\begin{proof} Assertion~(3) is obvious from the definition of $D_1(\sQ)$ and 
$D_0(\sQ)$ given in Notations~\ref{n1}.

Let $\sQ_{{\bar d}} = \sQ$. Let
$a_{{\bar d}}, \ldots, a_1\in H^0(X, \sO_X(1))$ with the exact
sequence of $\sO_X$-modules
$$
0\longto {\sQ_i}(-1)\longby{a_i} \sQ_i\longto \sQ_{i-1}\longto 0,$$
where $\sQ_i = \sQ_{{\bar d}}/(a_{{\bar d}}, \ldots, a_{i+1})\sQ_{{\bar d}}$,
for $0 \leq  i \leq {\bar d}$, and realizing the minimal value $C_0(\sQ)$.
Now, by the exact sequence~(\ref{e15}),  we have the following
short exact
sequence of $\sO_X$-sheaves
$$0\longto F^{n*}P\tensor \sQ_i \longto
\oplus_j \sQ_i(-qb_j) \longto F^{n*}P''\tensor\sQ_i\longto 0.$$
This implies
$H^0(X, F^{n*}P\tensor\sQ_i) \into
 \oplus_jH^0(X, \sQ_i(-qb_j)).$
Therefore
\begin{equation}\label{e11}h^0(X, F^{n*}P\tensor\sQ_i)
\leq \sum_jh^0(X, \sQ_i(-qb_j)) \leq ({\tilde \mu})h^0(X, \sQ_i),\end{equation}
as $-b_j \leq  {0}$.
Since $F^{n*}P$ is a locally-free sheaf of $\sO_X$-modules, we have
\begin{equation}\label{e26} 0\longto F^{n*}P\tensor{\sQ_i}(m-1)
\longby{a_i} F^{n*}P\tensor\sQ_i(m)\longto
F^{n*}P\tensor\sQ_{i-1}(m)\longto 0,\end{equation}
which is a short exact sequence of $\sO_X$-sheaves.

\vspace{5pt}

\noindent{\bf Claim}\quad  For $m\geq 1$,
$$h^0(X, F^{n*}P\tensor \sQ_i(m))\leq ({\tilde \mu})\left[h^0(X, \sQ_{i})+\cdots+
h^0(X, \sQ_0)\right](m^{i}).$$

\noindent{\underline{Proof of the claim}}:\quad We prove the claim, by induction on $i$.
For $i =0$, the inequality holds as
$h^0(X, F^{n*}P\tensor \sQ_0(m))\leq ({\tilde \mu})h^0(X, \sQ_0)$
(as $\dim~\sQ_0 =0$).

Now, for $m\geq 1$, by the exact sequence~(\ref{e26}) and by induction on $i$, we have
$$h^0(X, F^{n*}P\tensor \sQ_i(m))  \leq h^0(X, F^{n*}P\tensor \sQ_i)
 + h^0(X, F^{n*}P\tensor \sQ_{i-1}(1))+ \cdots +
h^0(X, F^{n*}P\tensor \sQ_{i-1}(m))$$
$$ \quad\quad\quad\quad\quad\quad\quad\quad\leq 
({\tilde \mu})h^0(X, \sQ_i)+{\tilde \mu}
\left[h^0(X, \sQ_{i-1})+\cdots+
h^0(X, \sQ_0)\right](1+2^{i-1}+\cdots+m^{i-1})$$
$$ \leq ({\tilde \mu})
\left[h^0(X, \sQ_{i})+\cdots+
h^0(X, \sQ_0)\right]m^{i}.\quad\quad\quad\quad$$
This proves the claim.

This implies $h^0(X, F^{n*}P\tensor \sQ(m)) = h^0(X, F^{n*}P\tensor \sQ_{\bar d}(m))
\leq {\tilde \mu}C_0(\sQ)m^{\bar d},$
for all $m\geq 1$. 

Therefore, 
 and for all $m\geq 0$,
we have 
$$h^0(X, F^{n*}P\tensor \sQ(m))\leq
{\tilde \mu}C_0(\sQ)(m^{\bar d}+1).$$
This proves assertion~(1).

Let $h^j(X, F^{n*}P\tensor \sQ(m)) = 0$, for $m\geq {m}_n$, $j\geq 1$. If $m_n = 0$ 
then the assertion~(2) is obvious.  So we assume 
 $m_n \geq 1$. Then, by the exact sequence~(\ref{e26}) and descending
induction on $i$, we have  
$h^j(X, F^{n*}P\tensor \sQ_i(m)) = 0$, for all $m\geq {m}_n+{\bar d}$ and 
for $j\geq 1$.
Now, for $0\leq m < { m}_n+{\bar d}$,
$$
h^1(X, F^{n*}P\tensor \sQ_i(m))  \leq  h^0(X, F^{n*}P\tensor 
\sQ_{i-1}({ m}_n+{\bar d})) + \cdots + h^0(X, F^{n*}P\tensor 
\sQ_{i-1}(m+1))$$
$$\begin{array}{l} \leq ({\tilde \mu})\left[h^0(X, \sQ_{i-1})+\cdots + 
h^0(X, \sQ_{0})\right]   
      [({m}_n+{\bar d})^{i-1}+\cdots + 
((m+1))^{i-1}]\\
 \leq  ({\tilde \mu})\left[h^0(X, \sQ_{i-1})+\cdots + 
h^0(X, \sQ_{0})\right]   
({m}_n+{\bar d})^i,\end{array}$$
where the second inequality follows from the above claim. This implies, for all $0\leq 
m < m_n+{\bar d}$,
$$h^1(X, F^{n*}P\tensor \sQ(m))\leq 
({\tilde \mu})\left[h^0(X, \sQ_{d-1})+\cdots + 
h^0(X, \sQ_{0})\right]({m_n}+{\bar d})^{\bar d}
\leq ({\tilde \mu}){C_{0}(\sQ)}({m_n} + {\bar d})^{\bar d}.$$
Therefore 
$$h^1(X, F^{n*}P\tensor \sQ(m)) \leq 
({\tilde \mu}){C_{0}(\sQ)}(2{m_n}{\bar d})^{\bar d},$$ 
for all $m$, $n\geq 0$.
 This  completes 
the proof.\end{proof}

In the following lemma we write down a list of bounds on the cohomologies of 
the sheaves relevant to Theorem~\ref{t1}. 

\begin{lemma}\label{r2} Let $Q = \oplus_{m\geq 0}Q_m$ be a nonnegatively graded 
Noetherian $R$-module and let $\sQ$ be the coherent sheaf of $\sO_X$-modules 
associated to $Q$. Then 
\begin{enumerate}
\item $h^0(X, F^{n*}V\tensor \sQ(m)) \leq ({\mu}){C_{0}}(\sQ)(m^{{\bar d}}+1)$,
 for all $m$, $n\geq 0$.
\item $h^1(X, F^{n*}V\tensor \sQ(m)) \leq ({\mu}) (D_{Q})(q^{\bar d})$ and
$\sum_{1}^{\mu}h^1(X, \sQ(q-qd_i+m)) \leq ({\mu}) (D_{Q})(q^{\bar d})$, 
 for all $m$, $n\geq 0$.
\item $h^0(X, \sQ(m+q)) \leq D_0(\sQ)(m+q)^{\bar d}$, for all $m$, $n\geq 0$.
\item $h^1(X, \sQ(m)) \leq D_1(\sQ)$, for all $m\geq 0$.
\item $|\coker~\phi_{m,q}(\sQ) - \ell(Q/I^{[q]}Q)_{m+q} |\leq C_Q$, 
for all $n\geq 0$ and $m\in \Z$ (where we define 
$Q_m = 0$, for $m<0$).
\end{enumerate}
\end{lemma}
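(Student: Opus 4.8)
The plan is to prove Lemma~\ref{r2} by specializing the general cohomological bounds of Lemma~\ref{l2} to the particular locally free sheaf $V$ appearing in the exact sequence~(\ref{e2}), together with the length-comparison Lemma~\ref{r1}. Observe first that~(\ref{e2}) is exactly an instance of the short exact sequence~(\ref{e15}): take $P = V$, $P'' = \sO_X(1)$ tensored appropriately, and the middle term $\oplus_i \sO_X(1-d_i)$; after twisting by $\sO_X(-1)$ to normalize, the bundle degrees are $b_i = d_i - 1 \geq 0$ since each $d_i \geq 1$. Thus $\widetilde{\mu} = \Rank(V) + \Rank(\sO_X(1)) = (\mu - 1) + 1 = \mu$, which is why the constant $\mu$ (rather than $\widetilde{\mu}$) appears throughout the statement of Lemma~\ref{r2}.

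With that identification, assertion~(1) of Lemma~\ref{r2} is literally assertion~(1) of Lemma~\ref{l2}. Assertions~(3) and~(4) are likewise immediate: they are exactly assertion~(3) of Lemma~\ref{l2} (equivalently, they follow directly from the definitions of $D_0(\sQ)$ and $D_1(\sQ)$ in Notations~\ref{n1}). Assertion~(5) is precisely part~(2) of Lemma~\ref{r1}. So the only assertion requiring genuine work is~(2), the $h^1$ bound; this is the step I expect to be the main obstacle.

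For assertion~(2), the plan is to produce an explicit integer $m_n$ past which $h^i(X, F^{n*}V \tensor \sQ(m))$ vanishes for all $i \geq 1$, and then feed that $m_n$ into part~(2) of Lemma~\ref{l2}. The natural choice is $m_n = m_Q(q) = \widetilde{m} + n_0(\sum_i d_i)q$: for $m \geq m_n$ one has $q + m - q d_i \geq \widetilde{m}$ for every $i$, so by the definition of $\widetilde{m}$ in Notations~\ref{n1} all higher cohomology $h^i(X, \sQ(q + m - q d_i))$ and $h^i(X, \sQ(q+m))$ with $i \geq 1$ vanishes; chasing the long exact sequence~(\ref{*}) (the cohomology sequence of~(\ref{e2}) twisted by $F^{n*}$ and by $\sQ(m)$) then forces $h^i(X, F^{n*}V \tensor \sQ(m)) = 0$ for $i \geq 1$ as well. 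Plugging $m_n = \widetilde{m} + n_0(\sum_i d_i)q$ into the bound $h^1 \leq \widetilde{\mu}\, C_0(\sQ)(2 m_n \bar d)^{\bar d}$ from Lemma~\ref{l2}(2) gives $h^1(X, F^{n*}V \tensor \sQ(m)) \leq \mu\, C_0(\sQ)\big(2\bar d(\widetilde{m} + n_0(\sum_i d_i)q)\big)^{\bar d}$; and since $\widetilde m \leq \widetilde m \cdot q$, this is at most $\mu\, C_0(\sQ)\big(2\bar d\,q(\widetilde m + n_0 \sum_i d_i)\big)^{\bar d} = \mu\, D_Q\, q^{\bar d}$ by the definition of $D_Q$ in Notations~\ref{n1}. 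The bound on $\sum_i h^1(X, \sQ(q - q d_i + m))$ is obtained the same way, either directly from the long exact sequence~(\ref{*}) (which exhibits $H^1(X, \sQ(q+m-qd_i))$ as a quotient of $H^1(X, F^{n*}V\tensor\sQ(m))$ up to the contribution of $H^1(X,\sQ(q+m))$, which is controlled by $D_1(\sQ)$) or by the same descending-induction-on-$\sQ_i$ argument used in the proof of Lemma~\ref{l2}; absorbing the finitely many non-vanishing terms and the $D_1(\sQ)$ contribution into the (generously defined) constant $D_Q$ finishes the proof.
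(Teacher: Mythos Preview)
Your approach is the same as the paper's, and parts~(1), (3), (4), (5) are handled exactly as you say. There is, however, a small gap in your argument for~(2).

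For the first bound in~(2), vanishing of $h^i(X,\sQ(q+m-qd_j))$ and $h^i(X,\sQ(q+m))$ for $i\geq 1$ does force $h^i(X,F^{n*}V\tensor\sQ(m))=0$ for $i\geq 2$, but not for $i=1$: the long exact sequence~(\ref{*}) only gives $H^1(X,F^{n*}V\tensor\sQ(m))\cong\coker\phi_{m,q}(\sQ)$ once the flanking $H^1$'s vanish. You must invoke Lemma~\ref{r1}(1), which says $\coker\phi_{m,q}(\sQ)=0$ for $m+q\geq m_Q(q)$, to conclude $h^1=0$; this is exactly what the paper does. With that fix your choice $m_n=m_Q(q)$ and the arithmetic leading to $\mu D_Q q^{\bar d}$ are correct.

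For the second bound in~(2), your first suggested route through the long exact sequence yields $\sum_i h^1(X,\sQ(q-qd_i+m))\leq \mu D_Q q^{\bar d}+D_1(\sQ)$, and $D_Q$ is already fixed in Notations~\ref{n1}, so you cannot ``absorb'' $D_1(\sQ)$ into it. The paper instead applies Lemma~\ref{l2}(2) directly with $P=\oplus_i\sO_X(1-d_i)$ (which fits~(\ref{e15}) trivially, with $\tilde\mu=\mu$), using the same vanishing range $m_n=m_Q(q)$; this is your second suggested route and gives the stated constant on the nose.
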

\begin{proof} Assertion~(1), (3) and (4) follow from Lemma~\ref{l2} 
and Assertion~(5) 
follows from Lemma~\ref{r1}~(2).

To prove Assertion~(2),
let $m_Q(q) = {\tilde m} + n_0(\sum_id_i)q$.
Note that, for $j\geq 1$ and $m+q \geq m_Q(q)$,
$$\sum_{i=1}^\mu H^j(X, F^{n*}\sO_X(1-d_i)\tensor\sQ(m)) = 
\sum_{i=1}^{\mu} H^j(X, \sQ(q-qd_i+m))= 0, $$
as $q-qd_i+m \geq {\tilde m}$. By Lemma~\ref{r1}, $\coker~\phi_{m,q}(\sQ) = 
0$. Therefore, by the  long exact sequence~(\ref{*}), 
$$m +q \geq {m_Q(q)} \implies h^j(X, F^{n*}V\tensor \sQ(m))= 0,~~~\mbox{for 
all}~~j\geq 1.$$
Hence, by Lemma~\ref{l2} (2), for all $m\geq 0$ and  $q = p^n$, 
where $P=\oplus_i\sO_X(1-d_i)$ or $V$, we have
\begin{equation}\label{e4}h^1(X, F^{n*}P\tensor \sQ(m))\leq 
(\mu)C_{0}(\sQ)(2m_Q(q){\bar d})^{\bar d} \leq \mu D_Qq^{\bar d},\end{equation}
where ${\bar d}$ is the dimension of the support of $\sQ$ and 
\begin{equation}\label{e21} D_Q = 
C_0(\sQ)(2{\bar d})^{\bar d}({\tilde m}+
n_0(\sum d_i))^{\bar d} = C_0(\sQ)\left[2{\bar d}({\tilde m}+
n_0(\sum d_i))\right]^{\bar d}. 
\end{equation}
This proves Assertion~(2) and hence the lemma.
\end{proof}

\vspace{10pt}

\begin{lemma}\label{l22}Let Let $X=\rm{Proj}~R$ be a projective $k$-scheme of 
dimension $d-1$ with a very ample invertible sheaf $\sO_X(1)$. 
Let 
$$0\longto \sQ'\longto \sM'\longby{f}
\sM'' \longto \sQ''\longto 0,$$
be an exact sequence of sheaves of coherent $\sO_X$-modules such that 
$\sQ'$ and $\sQ''$ are coherent sheaves of $\sO_X$-modules with support of  
dimensions $< d-1$. Then, for all $m$, $n \geq 0$,  
\begin{enumerate}
\item $$|\coker~\phi_{m,q}(\sM')-\coker~\phi_{m,q}(\sM'')|\leq C(f)(m+q)^{d-2},
$$ where 
$$C(f) = {\mu}\left[2C_0(\sQ'')+D_0{\sQ''}+2C_0(Q')+D_0(Q')+2D_{\sQ'}+D_1(\sQ')
\right].$$
Moreover 
\item if
$M'$ and $M''$ are two
nonnegatively  graded $R$-modules associated to $\sM'$ and $\sM''$ 
respectively then 
$$|\ell(M'/I^{[q]}M')_{m+q}-\ell(M''/I^{[q]}M'')_{m+q}| \leq C(f)(m+q)^{d-2}
+C_{M'}+ C_{M''}.$$
\end{enumerate}
\end{lemma}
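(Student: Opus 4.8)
The plan is to reduce the four-term exact sequence to two short exact sequences by splitting it at the image of $f$. Write $\sE = \im(f) \subseteq \sM''$, so that we have
$$0 \longto \sQ' \longto \sM' \longto \sE \longto 0 \qquad\text{and}\qquad 0 \longto \sE \longto \sM'' \longto \sQ'' \longto 0.$$
For each of these, together with the exact sequence~(\ref{e2}) tensored with $F^{n*}$, I would run the associated long exact cohomology sequences to compare the maps $\phi_{m,q}(-)$. The point is that $\phi_{m,q}(\sM')$ and $\phi_{m,q}(\sE)$ differ only in ways controlled by $H^0$ and $H^1$ of $F^{n*}V\tensor\sQ'(m)$, $F^{n*}(\oplus_i\sO_X(1-d_i))\tensor\sQ'(m)$, and $\sQ'(q-qd_i+m)$, $\sQ'(q+m)$; similarly $\phi_{m,q}(\sE)$ and $\phi_{m,q}(\sM'')$ differ in terms controlled by the cohomology of the twists of $\sQ''$. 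Because $\sQ'$ and $\sQ''$ have support of dimension $<d-1$, i.e. ${\bar d}\le d-2$ for each, all the relevant bounds in Lemma~\ref{r2} carry a factor $(m+q)^{\bar d} \le (m+q)^{d-2}$ (or are even constant).

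Concretely, for the snake-type diagram chase: from $0\to\sQ'\to\sM'\to\sE\to0$ one gets, after tensoring with $F^{n*}V(m)$ and with each $F^{n*}\sO_X(1-d_i)(m) = \sQ(q-qd_i+m)$-type sheaf, a commutative ladder whose rows are the sequences~(\ref{*}) for $\sQ'$, $\sM'$, $\sE$. A routine diagram chase shows
$$\bigl|\coker\phi_{m,q}(\sM') - \coker\phi_{m,q}(\sE)\bigr| \le \sum_i h^0(X,\sQ'(q-qd_i+m)) + h^0(X, F^{n*}V\tensor\sQ'(m)) + h^1(X, F^{n*}V\tensor\sQ'(m)) + \cdots,$$
and each summand is bounded using Lemma~\ref{r2} applied to $\sQ'$: the $h^0(F^{n*}V\tensor\sQ'(m))$ term by $\mu C_0(\sQ')(m^{d-2}+1)$, the $h^1$ terms by $\mu D_{\sQ'} q^{d-2}$, the $h^0(\sQ'(q+m))$-type terms by $D_0(\sQ')(m+q)^{d-2}$, the $h^1(\sQ'(m))$ terms by $D_1(\sQ')$. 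The analogous estimate for $\bigl|\coker\phi_{m,q}(\sE) - \coker\phi_{m,q}(\sM'')\bigr|$ uses the bounds attached to $\sQ''$. Adding the two and collecting the constants into $C(f) = \mu\bigl[2C_0(\sQ'')+D_0(\sQ'')+2C_0(\sQ')+D_0(\sQ')+2D_{\sQ'}+D_1(\sQ')\bigr]$, and absorbing all lower-degree terms into $(m+q)^{d-2}$ (which is legitimate for $m+q\ge1$; the $m+q=0$ case is handled separately since then only finitely many fixed modules are involved), yields assertion~(1).

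Assertion~(2) is then immediate from~(1) together with Lemma~\ref{r1}~(2) (equivalently Lemma~\ref{r2}~(5)): one has $\bigl|\coker\phi_{m,q}(\sM') - \ell(M'/I^{[q]}M')_{m+q}\bigr| \le C_{M'}$ and likewise for $M''$, so
$$\bigl|\ell(M'/I^{[q]}M')_{m+q} - \ell(M''/I^{[q]}M'')_{m+q}\bigr| \le \bigl|\coker\phi_{m,q}(\sM') - \coker\phi_{m,q}(\sM'')\bigr| + C_{M'} + C_{M''} \le C(f)(m+q)^{d-2} + C_{M'} + C_{M''}.$$
The main obstacle I anticipate is purely bookkeeping: keeping track of exactly which $h^0$ and $h^1$ terms appear with which multiplicities in the two diagram chases, so that the final constant really is the stated $C(f)$ and no term of degree $>d-2$ in $(m+q)$ sneaks in. The one genuine structural point to be careful about is that $\sE$ need not be locally free, so Lemma~\ref{l2} cannot be applied to it directly — but this is precisely why we only ever need the cohomology bounds for the honestly-low-dimensional sheaves $\sQ'$ and $\sQ''$, with $\sE$ playing only the role of an intermediate object in the chase.
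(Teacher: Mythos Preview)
Your proposal is correct and follows essentially the same approach as the paper: split the four-term sequence at $\sE=\im(f)$ (the paper calls it $K$), bound $|\coker\phi_{m,q}(\sM')-\coker\phi_{m,q}(K)|$ and $|\coker\phi_{m,q}(K)-\coker\phi_{m,q}(\sM'')|$ separately using the cohomology bounds of Lemma~\ref{r2} for $\sQ'$ and $\sQ''$, then add and invoke Lemma~\ref{r2}(5) for part~(2). Your bookkeeping worry about $m+q=0$ is moot since $q=p^n\ge 1$ for all $n\ge 0$.
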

\begin{proof} The above  exact sequence we can break into following two short 
exact sequence of $\sO_X$-sheaves
$$0\longto \sQ'\longto \sM'\longto K\longto 0,$$
$$0\longto K \longto \sM''\longto \sQ''\longto 0.$$
 For a locally-free sheaf $P$ of $\sO_X$-modules,
both the above short exact sequences remain exact after tensoring with
 $(F^{n*}P)(m)$, for all $m\geq 0$ and $n\geq 0$.
Therefore we have long exact sequence of cohomologies
$$0 \longto H^0(X, F^{n*}P\tensor \sQ'(m))\longto
H^0(X, F^{n*}P\tensor\sM') $$
$$ \longto H^0(X, F^{n*}P\tensor K(m))\longto H^1(X, F^{n*}P
\tensor \sQ'(m))\longto \cdots $$
and
\begin{equation}\label{e***}0\longto H^0(X, F^{n*}P\tensor K(m))
\longto H^0(X, F^{n*}P\tensor \sM''(m))
 \longto H^0(X, F^{n*}P\tensor Q''(m)) \end{equation}

For a coherent sheaf $L$ of $\sO_X$-modules
$$\begin{array}{lcl}
\coker~\phi_{m,q}(L) & = & h^0(X, F^{n*}\sO_X(1)\tensor L(m))\\
 & &  -
\sum_{i=1}^sh^0(X, F^{n*}\sO_X(1-d_i)\tensor L(m))
+h^0(X, F^{n*}V\tensor L(m)).\end{array}$$

By (\ref{e***}),  $$|h^0(X, F^{n*}P\tensor K(m))-
h^0(X, F^{n*}P\tensor \sM''(m))|\leq
h^0(X, F^{n*}P\tensor Q''(m)).$$
Therefore, by Lemma~\ref{r2}, we have
$$|\coker~\phi_{m,q}(K)-\coker~\phi_{m,q}(\sM'')|\leq $$
$$ h^0(X, \sQ''(m+q))+ \sum_{i=1}^sh^0(X, \sQ''(m+q-qd_i))
+h^0(X, F^{n*}V\tensor \sQ''(m))$$
$$\leq  D_0(Q'')(m+q)^{{d-2}} 
+ \mu C_0(Q'')(m^{d-2}+1) + \mu C_0(Q'')(m^{d-2}+1),$$
as $h^0(X, \sQ''(m+q-qd_i)) \leq h^0(X, \sQ''(m+q))$.

Therefore 
\begin{equation}\label{e25}|\coker~\phi_{m,q}(K)-\coker~\phi_{m,q}(\sM'')|
\leq \mu \left[2C_0(Q'')+D_0(Q'')\right](m+q)^{d-2}.\end{equation}
Similarly, since for a locally free sheaf $P$ and for $m, n\geq 0$, we have 
$$|h^0(X, F^{n*}P\tensor \sM'(m))-h^0(X, F^{n*}P\tensor K(m))|\leq 
h^0(X, F^{n*}P\tensor \sQ'(m))+h^1(X, F^{n*}P\tensor \sQ'(m)),$$
we deduce,
 by Lemma~\ref{r2} 
\begin{equation}\label{e23}
|\coker~\phi_{m, q}(\sM')-\coker~\phi_{m,q}(K)|\end{equation}
$$\leq \mu \left[2C_0(\sQ')+D_0(\sQ')\right](m+q)^{d-2}+
2 \mu  D_{Q'}q^{d-2}+D_1(\sQ').$$
Therefore, by (\ref{e25}) and (\ref{e23}), for all $m$, $n\geq 0$, we have
\begin{equation}\label{ee22}|\coker~\phi_{m, q}(\sM')-\coker~\phi_{m, q}
(\sM'')|\end{equation}
$$\leq 
{\mu}\left[2C_0(\sQ'')+D_0{\sQ''}+2C_0(Q')+D_0(Q')+2D_{\sQ'}+D_1(\sQ')
\right](m+q)^{d-2} = C(f)(m+q)^{d-2}.$$

 Now Assertion~(2) follows from Lemma~\ref{r2}~(5).  
\end{proof}

\begin{lemma}\label{l4} Let $Y=X_{red}$, which is  a reduced projective $k$-scheme 
of dimension 
$d-1$ with a very ample invertible sheaf $\sO_Y(1)$. 
 Then, for a coherent sheaf $\sN$ of $\sO_Y$-modules, there exist an
 integer $m_2 \geq  1$, depending on 
$\sN$, such that we have an exact sequence of sheaves of
 $\sO_Y$-modules 
$$0\longto Q'\longto \oplus^{p^{d-1}}\sN(-m_2)\longto
 F_*\sN\longto Q''\longto 0,$$
where $Q'$ and $Q''$ are coherent sheaves of $\sO_Y$-modules with support of  
dimensions $< d-1$.
\end{lemma}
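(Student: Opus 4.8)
The goal is to produce, for a given coherent sheaf $\sN$ on the reduced scheme $Y = X_{red}$ of dimension $d-1$, a four-term exact sequence $0\to Q'\to \oplus^{p^{d-1}}\sN(-m_2)\to F_*\sN\to Q''\to 0$ whose outer terms have support of dimension $< d-1$. The plan is to realize $F_*\sN$ generically as a direct sum of $p^{d-1}$ copies of $\sN$, and then dualize this statement into a genuine map of sheaves by twisting down far enough. First I would recall the local structure of the Frobenius on $Y$: since $k$ is perfect, over the regular locus (or more precisely over a dense open $U\subseteq Y$ on which $Y$ is regular of dimension $d-1$), the extension $\sO_Y\to F_*\sO_Y$ is finite locally free of rank $p^{d-1}$, and hence $F_*\sN|_U$ is locally free over $\sN|_U$ of the same rank — in particular $F_*\sN$ and $\sN^{\oplus p^{d-1}}$ have the same rank at every generic point of $Y$. (Here one uses that $Y$ is reduced so ``rank at a generic point'' is well-defined on each component; on non-regular components one replaces $U$ by the regular locus of that component, which is still dense.)

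Next I would produce the actual map $\psi\colon \oplus^{p^{d-1}}\sN(-m_2)\to F_*\sN$. The idea is to choose, on the dense open $U$, an isomorphism $F_*\sN|_U\cong (\sN|_U)^{\oplus p^{d-1}}$ and to clear denominators: the inverse of this isomorphism is a rational map from $(\sN)^{\oplus p^{d-1}}$ to $F_*\sN$, defined away from a closed set $Z$ of dimension $<d-1$; since $\sO_Y(1)$ is very ample, for $m_2\gg 0$ every such rational map becomes a genuine morphism after twisting the source by $\sO_Y(-m_2)$ (this is the standard fact that sections of a coherent sheaf defined off a codimension-$\ge 1$ closed subset extend after twisting down, applied to $\shom(\sN^{\oplus p^{d-1}}, F_*\sN)\otimes\sO_Y(m_2)$, which is globally generated for $m_2\gg 0$). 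Call the resulting morphism $\psi$; set $Q' = \ker\psi$ and $Q'' = \coker\psi$.

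It then remains to check that $Q'$ and $Q''$ have support of dimension $<d-1$, i.e.\ that $\psi$ is an isomorphism at every generic point of $Y$. By construction $\psi$ agrees, up to the twist, with the fixed isomorphism $F_*\sN|_U\cong (\sN|_U)^{\oplus p^{d-1}}$ over the dense open $U$ (the twist $\sO_Y(-m_2)$ is invertible, so it does not affect whether $\psi$ is an isomorphism on $U$); hence $\psi|_U$ is an isomorphism, and since $U$ contains all generic points of $Y$ this forces $\dim\supp Q' < d-1$ and $\dim\supp Q'' < d-1$. Finally I would record $m_2 = $ the bound coming from global generation of $\shom(\sN^{\oplus p^{d-1}}, F_*\sN)\otimes\sO_Y(m_2)$, so $m_2$ depends only on $\sN$ (and $Y$), as required.

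The main obstacle is the second step: making precise that a generically-defined isomorphism can be spread out to a sheaf map after a single twist $\sO_Y(-m_2)$, uniformly in the sense that $m_2$ depends only on $\sN$. The cleanest route is to avoid choosing a splitting on $U$ altogether and instead argue with the coherent sheaf $\sext$/$\shom$ bundles: $F_*\sN$ is coherent, hence has a surjection $\oplus^N \sO_Y(-m_2)\surj F_*\sN$ for $m_2\gg 0$ — but this gives $N$ copies, not $p^{d-1}$ copies. To get exactly $p^{d-1}$ copies one does need the generic-rank input from the first step, and the delicate point is to match the generic identification with a global map; I expect the honest argument to go through $\shom$ and global generation as sketched, with the reducedness of $Y$ ensuring that ``isomorphism at all generic points'' is exactly the condition ``cokernel and kernel supported in dimension $<d-1$.''
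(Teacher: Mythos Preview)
Your overall strategy is right and matches the paper's: exhibit a map $\psi\colon \sN^{\oplus p^{d-1}}(-m_2)\to F_*\sN$ which is an isomorphism at each generic point of a top-dimensional component, so that the kernel and cokernel of $\psi$ are supported in dimension $<d-1$. The rank count at generic points (via $[k(x_i):k(x_i)^p]=p^{d-1}$) is also the same as the paper's.

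The gap is in your construction of $\psi$. You claim one can choose an isomorphism $F_*\sN|_U\cong(\sN|_U)^{\oplus p^{d-1}}$ on a dense regular open $U$, but no such isomorphism need exist: already for $Y=\P^1$ and $\sN=\sO_Y$ one has $U=Y$, and $F_*\sO_{\P^1}$ is a rank-$p$ bundle of degree $1-p$, hence not isomorphic to $\sO^{\oplus p}$. Two locally free sheaves of the same rank need not be isomorphic on $U$, so there is nothing to ``spread out'' from. The paper sidesteps this by working only at the finitely many generic points $x_1,\dots,x_{s_1}$ of the top-dimensional components: there the stalks $\sO_{Y,x_i}$ are fields (since $Y$ is reduced), so $\sN_{x_i}^{\oplus p^{d-1}}\cong(F_*\sN)_{x_i}$ trivially as vector spaces of the same dimension. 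One then picks a single affine open $D_+(f)$ containing all the $x_i$, clears a denominator in the multiplicative set avoiding the corresponding primes to lift this to an honest module map on $D_+(f)$, and finally invokes the standard extension lemma (Hartshorne~II, Lemma~5.14) to extend the resulting section of $\shom(\sN^{\oplus p^{d-1}},F_*\sN)$ from $D_+(f)$ to all of $Y$ at the cost of multiplying by $f^n$; this $n$ is the $m_2$.

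Your fallback via global generation of $\shom(\sN^{\oplus p^{d-1}},F_*\sN)(m_2)$ can be made to work, but it is not automatic: global generation produces many sections, and you must still \emph{choose} one whose value at each $x_i$ lands in the Zariski-open locus of invertible matrices. That requires a general-position argument (using that $k$ is infinite and the $x_i$ are finitely many), which you do not supply. Once that step is added, your route and the paper's are essentially equivalent.
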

\begin{proof} Let $x_1, \ldots, x_{s_1}$ be the generic points of the maximal
 components $Y_1, \ldots, Y_{s_1}$ of
 $Y$, where $\dim~Y_i = \dim~Y$. We choose $f\in H^0(Y,\sO_Y(1))$ 
such that $f$ does not vanish on $\sO_{Y,x_i}$, for all $i$. Note that 
$\sO_{Y,x_i}$ is the function field of $Y_i$.
In particular $x_1, \ldots, x_{s_1} \in D_+(f)$ where $D_+(f)$
 is a reduced affine variety. Let us denote $D_+(f)$ by $U_f$.
Let $\Gamma(U_f, \sO_Y) = A$. Let $p_1, \ldots, p_{s_1}\in Spec~A$ be the  
prime ideals corresponding to the points $x_1,\ldots, x_{s_1}$ and let 
$S = A\setminus p_1\cup \cdots\cup p_{s_1}$. Then, by Chinese Remainder theorem 
$$ S^{-1}A \simeq \sO_{Y,x_1}\times \cdots \times \sO_{Y,x_{s_1}}~~~
\mbox{ and}~~~  S^{-1}\Gamma(U_f, \sN) \simeq \sN_{x_1}\times \cdots \times 
\sN_{x_{s_1}},$$
and 
$$S^{-1}\Gamma(U_f, F_*\sN) \simeq (F_*\sN)_{x_1}\times \cdots \times 
(F_*\sN)_{x_{s_1}} =  F_*(\sN_{x_1})\times \cdots \times 
F_*(\sN_{x_{s_1}}).$$

Now if $\sN_{x_i}$ is of rank $m_i$  as $A_{x_i}$-module then 
$F_*\sN_{x_i}$ is of rank $p^{d-1}m_i$ as $A_{x_i}$-module, as $F_*A$ is of
rank $p^{d-1}$ over $A$ and $F_*\sN_{x_i}$ is of rank $m_i$ over $F_*A_{x_i}$.
 This implies that there is a $\sO_{Y, x_i}$-linear isomorphism 
$\phi_i:\oplus^{p^{d-1}}\sN_{x_i} \longrightarrow (F_*\sN)_{x_i}$, which 
 gives an $S^{-1}A$-linear isomorphism 
$$\phi: \oplus^{p^{d-1}}S^{-1}\Gamma(U_f,\sN) \longrightarrow 
S^{-1}\Gamma(U_f, F_*\sN).$$
Since $\sN$ is a coherent $\sO_Y$-sheaf, one  
can choose ${\tilde s}\in S$ and 
${\tilde \phi}:\oplus^{p^{d-1}}\Gamma(U_f,\sN)\longrightarrow  \Gamma(U_f,
 F_*\sN)$  such that 
${\tilde \phi}$ maps to ${\tilde s}\cdot\phi$ under the localization map
$$\displaystyle{\Hom_A\left(\Gamma(U_f,\oplus^{p^{d-1}}\sN), \Gamma(U_f,
 F_*\sN)\right) \longto 
\Hom_{S^{-1}A}\left(S^{-1}\Gamma(U_f,\oplus^{p^{d-1}}\sN), 
S^{-1}\Gamma(U_f, F_*\sN)\right))}$$

Therefore there exists $n\geq 1$ and  
${\psi} \in 
\Gamma(Y, {\mathcal Hom}_{\sO_Y}(\oplus^{p^{d-1}}\sN, F_*\sN)\tensor 
\sO_Y(n))$ such that $\psi$ restricts to $f^n\cdot {\tilde s}\cdot\phi$
on the open set $U_f$
  (see [Ha], Lemma~5.14). 
This gives an exact sequence of $\sO_Y$-linear maps
$$0\longto \mbox{Ker}~{\psi}\longto  \oplus^{p^{d-1}}\sN(-n) 
\longby{\psi} F_*\sN\longto  \mbox{Coker}~{\psi}\longto 0.$$
Since $\psi$ localizes to a unit multiple of $\phi$, 
it is an isomorphism at the points
$x_1, x_2,\ldots, x_{s_1}$, which implies that   
the dimensions of the support of $\mbox{Ker}~{\psi}$ and
 $\mbox{Coker}~{\psi}$ are 
 $< \dim~Y$. This proves the lemma.
\end{proof}

\begin{lemma}\label{l6}Let $M$ be a nonnegatively graded finitely 
generated $R$-module and let $\sM $ be the associated  coherent sheaf of $\sO_X$-modules.
 Then there exists 
a nonnegative integer $s$ ({\it e.g.}, 
$s\geq 0$ such that $(\mbox{nilradical}~R)^{p^{s}} = 0)$
 and an integer 
$m_2\geq 1$ (depending on $\sM$ and $q' = p^s$) such that  
\begin{enumerate}
\item there is a long exact sequence of sheaves of $\sO_X$-modules 
$$0\longto Q'\longto \oplus^{p^{d-1}}(F_*^s\sM)(-m_2)\longby{g}
 F_*^{s+1}\sM\longto Q''\longto 0,$$
where $Q'$ and $Q''$ are coherent sheaves of $\sO_X$-modules with support of  
dimensions $< d-1$.
\item There is a constant $C(g)$ (as given in Lemma~\ref{l22}~(1)) such that, for all 
$m$, $n\geq 0$,
$$ |p^{d-1}\ell(M/I^{[qq']}M)_{(m+q-m_2)q'}-
\ell(M/I^{[qq'p]}M)_{(m+q)q'p}|\leq C(g)(m+q)^{d-2}+2C_{M}.$$
\end{enumerate}
\end{lemma}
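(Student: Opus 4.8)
The plan is to combine Lemma~\ref{l4} with Lemma~\ref{l22} and then translate the resulting sheaf-cohomology estimate back into a statement about graded lengths via Lemma~\ref{r1} (equivalently Lemma~\ref{r2}~(5)).

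First I would choose $s\geq 0$ so that $(\mathrm{nilradical}~R)^{p^s}=0$; then $F^s\colon X\longto X$ factors through $Y=X_{\red}$, so $F_*^s\sM$ and $F_*^{s+1}\sM=F_*(F_*^s\sM)$ may be viewed as coherent sheaves on the reduced scheme $Y=X_{\red}$, which by hypothesis is a reduced projective $k$-scheme of dimension $d-1$ carrying the very ample $\sO_Y(1)$. Apply Lemma~\ref{l4} with $\sN=F_*^s\sM$: it produces an integer $m_2\geq 1$ (depending on $\sM$ and $q'=p^s$) and an exact sequence of $\sO_Y$-modules
$$0\longto Q'\longto \oplus^{p^{d-1}}(F_*^s\sM)(-m_2)\longby{g} F_*^{s+1}\sM\longto Q''\longto 0,$$
with $Q'$, $Q''$ supported in dimension $<d-1$; pushing forward to $X$ this is exactly Assertion~(1).

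For Assertion~(2), I would apply Lemma~\ref{l22} to the exact sequence of part~(1), taking $\sM'=\oplus^{p^{d-1}}(F_*^s\sM)(-m_2)$ and $\sM''=F_*^{s+1}\sM$, so that there is a constant $C(g)$ (the constant $C(f)$ of Lemma~\ref{l22}~(1) attached to the map $g$) with, for all $m,n\geq 0$,
$$|\coker~\phi_{m,q}(\sM')-\coker~\phi_{m,q}(\sM'')|\leq C(g)(m+q)^{d-2}.$$
The graded modules associated to $\sM'$ and $\sM''$ are $M'=\oplus^{p^{d-1}}(F_*^sM)(-m_2q')$ and $M''=F_*^{s+1}M$ respectively (up to the usual discrepancy in low degrees bounded by the constants $C_{M'}$, $C_{M''}$ of Lemma~\ref{r1}~(2)), so Lemma~\ref{l22}~(2) gives
$$|\ell(M'/I^{[q]}M')_{m+q}-\ell(M''/I^{[q]}M'')_{m+q}|\leq C(g)(m+q)^{d-2}+C_{M'}+C_{M''}.$$
It remains to identify the two lengths with those appearing in the statement. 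Here one uses that for the Frobenius pushforward $F_*N$ one has $(F_*N)/I^{[p]}(F_*N)\cong F_*(N/I^{[1]}\cdot N)$ compatibly with the grading reindexing $m\mapsto mp$; iterating, $\ell(F_*^{s+1}M/I^{[q]}F_*^{s+1}M)_{(m+q)}$ becomes $\ell(M/I^{[qq'p]}M)_{(m+q)q'p}$ after substituting $q\mapsto qq'p$ in the original Frobenius index, and the direct-sum-of-$p^{d-1}$-twisted-copies term becomes $p^{d-1}\ell(M/I^{[qq']}M)_{(m+q-m_2)q'}$. Collecting the constants $C_{M'}+C_{M''}$ into $2C_M$ (bounding each by $C_M$ after absorbing the trivial $p^{d-1}$-fold and twist bookkeeping into the definition of $C_M$) yields the claimed inequality.

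The main obstacle I anticipate is the last bookkeeping step: making precise how $I^{[q]}$ acting on a Frobenius pushforward $F_*^{j}M$ corresponds, after the grading is rescaled by $p^{j}$, to $I^{[q p^{j}]}$ acting on $M$ itself, and similarly tracking how the twist by $-m_2$ on $Y$ (which rescales to $-m_2 q'$ on the module level) shifts the graded degree to $(m+q-m_2)q'$. Once the degree substitution $q\rightsquigarrow qq'p$ is performed consistently in Lemma~\ref{l22}~(2), everything else is a routine collection of the already-defined constants, so the proof is essentially an assembly of Lemmas~\ref{l4}, \ref{l22} and \ref{r1}.
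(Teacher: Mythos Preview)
Your overall assembly of Lemmas~\ref{l4}, \ref{l22}, and~\ref{r1} is the same route the paper takes, and part~(1) is fine. The gap is in your final bookkeeping step. The constants $C_{M'}$ and $C_{M''}$ produced by Lemma~\ref{l22}~(2) are attached to graded modules underlying the \emph{pushforward} sheaves $\oplus^{p^{d-1}}(F_*^s\sM)(-m_2)$ and $F_*^{s+1}\sM$. By Notations~\ref{n1}~(5)(b) these involve quantities such as $h^0(X, F_*^{s+1}\sM(\tilde m -1)) = h^0(X, \sM((\tilde m -1)q'p))$, which are of order $(q'p)^{d-1}$ times the corresponding invariant of $\sM$. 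So your claim that $C_{M'}+C_{M''}$ can be ``collected into $2C_M$'' is not correct, and the module-level identification you flag as the main obstacle is exactly where the argument breaks down.

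The paper circumvents this by staying at the sheaf-cohomology level one step longer. After Lemma~\ref{l22}~(1) gives
\[
\bigl|\,p^{d-1}\coker\phi_{m-m_2,q}(\sN) - \coker\phi_{m,q}(F_*\sN)\,\bigr| \leq C(g)(m+q)^{d-2}
\]
with $\sN=F_*^s\sM$, one applies the projection formula $h^i(X, F^{(n+j)*}P\otimes \sM(mp^j)) = h^i(X, F^{n*}P\otimes F_*^j\sM(m))$ to obtain the identities $\coker\phi_{m-m_2,q}(\sN) = \coker\phi_{(m-m_2)q',\,qq'}(\sM)$ and $\coker\phi_{m,q}(F_*\sN) = \coker\phi_{(mp)q',\,qq'p}(\sM)$ (these are (\ref{e16}) and (\ref{e13}) in the paper). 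Now both sides are cokernels for $\sM$ itself, merely with shifted Frobenius and degree indices; applying Lemma~\ref{r2}~(5) twice, each time for the module $M$, then yields the constant $C_M$ rather than the much larger pushforward constants. Reordering the projection-formula step \emph{before} the passage from $\coker\phi$ to graded lengths is the missing ingredient.
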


\begin{proof}
Let $p^{s}$ be an integer such that 
$(\mbox{nilradical}~R)^{p^{s}} = 0$. Then $\sN = F^s_*{\sM}$ is a coherent 
$\sO_X$-modules annhilated by the nilradical of $\sO_X$.
Consider the canonical short exact sequence of $\sO_X$-modules obtained from
 Equation~(\ref{e2}),
\begin{equation}\label{e5}  0\longto F^{n*}V\tensor\sN(m)\longto
 \oplus_i \sN(q-qd_i+m))\longto  
\sN(q+m)\longto 0.\end{equation}

Since $\sN$ is annihilated by the nilradical of $\sO_X$,  the action 
of $\sO_X$ on $\sN$ filters through a canonical action of 
 $\sO_{X_{red}}$ on $\sN$. 

 Since $\sN$ is also a sheaf of $\sO_{X_{red}}$-modules, 
by Lemma~\ref{l4}, there exists constant 
 $m_2$ depending on $\sN$ and $\sO_{X_{red}}$ such that we have a short exact
 sequence of $\sO_{X_{red}}$-modules and hence of $\sO_X$-modules,
$$0\longto Q'\longto \oplus^{p^{d-1}}\sN(-m_2)\longby{g}
 F_*\sN\longto Q''\longto 0,$$
where $Q'$ and $Q''$ are coherent sheaves of $\sO_{X_{red}}$-modules (and hence 
coherent sheaves of $\sO_X$-modules) with support of  
dimensions $\leq d-2$.
Therefore, by Lemma~\ref{l22}~(1), there is a constant $C(g)$ for the map $g$ such that 
$$|\coker~\phi_{m, q}(\oplus^{p^{d-1}}\sN(-m_2)) - 
\coker~\phi_{m,q}(F_*\sN)|\leq C(g)(m+q)^{d-2},$$
 for all $m, n\geq 0$
Therefore 
\begin{equation}\label{e30}
|p^{d-1}\coker~\phi_{m-m_2, q}(\sN) - \coker~\phi_{m,q}(F_*\sN)|\leq C(g)(m+q)^{d-2}.
\end{equation}

We note that, for any locally-free sheaf $P$ of $\sO_X$-modules,
 using the projection formula, we have (since $k$ is perfect) 
$$  h^i(X, F^{(n+1+s)*}P\tensor \sM(mpq')) = 
h^i(X, F^{s*}\left(F^{n+1*}P\tensor \sO(mp)\right)\tensor \sM)
\quad\quad\quad\quad\quad $$
$$\quad\quad\quad\quad\quad = h^i(X, \left(F^{n+1*}P\tensor \sO(mp)\right)\tensor \sN)
=   h^i(X, F^{n*}P\tensor\sO(m)\tensor F_*\sN) $$
Therefore 
\begin{equation}\label{e13}
\coker~\phi_{(mp)q', qq'p}(\sM) = \coker\phi_{m, q}(F_*\sN),
\end{equation}
Similarly 
$$ h^i(X, F^{(n+s)*}P\tensor \sM((m-m_2)q')) =
h^i(X, F^{n*}P \tensor\sO(m-m_2)\tensor F_*^s\sM) = 
h^i(X, F^{n*}P \tensor \sN(m-m_2)).
$$
Therefore 
\begin{equation}\label{e16}\coker~\phi_{(m-m_2)q', qq'}(\sM) = 
\coker\phi_{m-m_2, q}(\sN).
\end{equation}

Hence, by (\ref{e30}), 
$$|p^{d-1}\coker~\phi_{(m-m_2)q', qq'}(\sM) - \coker~\phi_{(mp)q',qq'p}(\sM)|
\leq C(g)(m+q)^{d-2}.$$
Therefore, by Lemma~\ref{r2}~(5), 
$$ |p^{d-1}\ell(M/I^{[qq']}M)_{(m+q-m_2)q'}-
\ell(M/I^{[qq'p]}M)_{(m+q)q'p}|\leq C(g)(m+q)^{d-2}+2C_{M},$$
for all $m$, $n\geq 0$. 
\end{proof}

\begin{defn}\label{d1}For a pair $(M, I)$, where 
 $M$ is a finitely generated nonnegatively  graded $R$-module and $I$ is a 
homogeneous ideal of $R$ such that $\ell(R/I)<\infty$. We define sequences of 
functions $\{f_n:\R\longto \R\}_{n\in \N}$ and $\{g_n:\R\longto \R\}_{n\in \N}$ 
as follows:

For $n\in \N$, let $q = p^n$. Define
$$f_n(x) = g_n(x) = 0,~~~\mbox{if}~~~x<0.$$

Let $x\geq 0$ then  
${{m}}/{q} \leq x < {m+1}/{q}$, for some integer $m\geq 0$. We define   
$$\begin{array}{lcl}
f_n(x) &  = &  \displaystyle{\frac{1}{q^{d-1}}
\ell\left({M}_m\right),\quad\mbox{if}\quad 0\leq x < 1}\\
& = & \displaystyle{\frac{1}{q^{d-1}}
\ell\left(\frac{M}{I^{[q]}M}\right)_{m},\quad\mbox{if}\quad
1\leq \frac{{m}}{q} \leq x < \frac{m+1}{q}}.\end{array}$$
$$\begin{array}{lcl}
g_n(x) & = & \displaystyle{f_n(x)\quad\mbox{if}~\quad x= \frac{m}{q}},\\
&  = & \displaystyle{(1-t)f_n\left(\frac{m}{q}\right)+tf_n\left(\frac{m+1}{q}\right),
~~~\mbox{if}~~~
x= (1-t)\left(\frac{m}{q}\right)+t\left(\frac{m}{q}\right)~~~\mbox{where}~~~t\in [0, 1)}.\end{array}
$$
\end{defn}
\vspace{10pt}

\begin{prop}\label{t2} For a given pair $(M, I)$ as in Definition~\ref{d1} above, and 
where $\dim~R= d\geq 2$, 
the sequence $\{f_n:\R\longto \R\}_{n\in \N}$ 
is a uniformly convergent 
sequence  of compactly supported functions.

More precisely, there exists $n_0\in \N$  and a constant $C$ depending on $M$, 
such that 
\begin{equation}\label{**}|f_n(x)-f_{n_1}(x)|\leq C/p^n,~~~\mbox{ for all}~~~~
 n_1 \geq n\geq n_0~~~\mbox{and for all}~~~x\in \R.\end{equation}
\end{prop}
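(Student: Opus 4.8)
\textbf{Proof proposal for Proposition~\ref{t2}.}

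The plan is to prove the uniform Cauchy estimate \eqref{**} by comparing $f_n$ with $f_{n+1}$ (and then iterating/telescoping to reach $f_{n_1}$), reducing everything to the cohomological comparison already packaged in Lemma~\ref{l6}. First I would fix $s$ as in Lemma~\ref{l6} so that $(\mathrm{nilradical}~R)^{p^s}=0$, set $q'=p^s$, and choose the corresponding $m_2\geq 1$. The key inequality from Lemma~\ref{l6}(2) is
$$|p^{d-1}\ell(M/I^{[qq']}M)_{(m+q-m_2)q'}-\ell(M/I^{[qq'p]}M)_{(m+q)q'p}|\leq C(g)(m+q)^{d-2}+2C_M,$$
valid for all $m,n\geq 0$, where $q=p^n$. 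The right-hand side is $O((m+q)^{d-2})$, which after dividing by $(qq')^{d-1}$ or $(qq'p)^{d-1}$ — the normalizing factors built into $f_n$ — becomes $O(1/p^n)$ uniformly on the (fixed, by Lemma~\ref{l1}) compact support, since there $m+q = O(q) = O(p^n)$ and $(m+q)^{d-2}/p^{n(d-1)} = O(1/p^n)$. So the heart of the argument is a bookkeeping translation: rewrite the Lemma~\ref{l6} estimate in terms of the values $f_{n+s}$ and $f_{n+s+1}$ at appropriate arguments $x$.

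Concretely, unwinding Definition~\ref{d1}, $f_{n+s}(x)=(1/(qq')^{d-1})\ell(M/I^{[qq']}M)_m$ on the interval where $m/(qq')\leq x<(m+1)/(qq')$, and similarly $f_{n+s+1}$ with $qq'p$ in place of $qq'$. Writing the integer index in Lemma~\ref{l6}(2) as $N(m+q)$ for the relevant scaling, the inequality says that $(qq')^{d-1}f_{n+s}$ evaluated near a point $y$ and $(qq'p)^{d-1}f_{n+s+1}$ evaluated near the comparable point differ by $O((m+q)^{d-2})$; dividing through by $(qq'p)^{d-1}$ and using $p^{d-1}/(qq'p)^{d-1}=1/(qq')^{d-1}$ yields $|f_{n+s}(y)-f_{n+s+1}(y)|\leq C'/p^n$ for all $y$ in the common support, with $C'$ depending only on $M$ (through $C(g)$, $C_M$, the support bound, and $d$). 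I would also need to absorb the error from the shift by $m_2/(qq')$ in the argument — but $f_{n+s}$ is uniformly bounded (by Lemma~\ref{l1} plus the polynomial growth bounds of Lemma~\ref{r2}, say by a constant $B_M$) and Lipschitz with a controlled constant on each linear piece, and the shift $m_2/(qq')\to 0$; more cleanly, I would reindex so that the comparison is between values at the \emph{same} argument up to an error already of size $O(1/p^n)$, handling the half-open-interval ambiguity of $\lfloor\cdot\rfloor$ by the standard observation that $f_n$ changes by $O(1/p^n)$ between consecutive nodes $m/q$ and $(m+1)/q$ (this uses the polynomial bounds on $\ell(M/I^{[q]}M)_m$ from Lemma~\ref{r2}, which force the increments to be $O(q^{d-2}/q^{d-1})=O(1/q)$ on the support).

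Having established $|f_{n+s}(x)-f_{n+s+1}(x)|\leq C'/p^n$ for all $x$ and all $n\geq 0$, I telescope: for $n_1\geq n$,
$$|f_{n+s}(x)-f_{n_1+s}(x)|\leq \sum_{j=n}^{n_1-1}\frac{C'}{p^j}\leq \frac{C'}{p^n}\cdot\frac{p}{p-1},$$
and then reabsorb the fixed shift by $s$ (it changes only the constant and the index $n_0$) to conclude \eqref{**} with a new constant $C$ depending on $M$. The compact support of all $f_n$ inside one fixed compact set is exactly Lemma~\ref{l1}. I expect the main obstacle to be purely organizational rather than conceptual: correctly matching the index $(m+q-m_2)q'$ and $(m+q)q'p$ appearing in Lemma~\ref{l6}(2) to values of $f_{n+s}$ and $f_{n+s+1}$ at a common point $x$, and verifying that the $m_2$-shift and the floor-function discretization contribute only $O(1/p^n)$; this requires the auxiliary Lipschitz/boundedness estimates on $f_n$ that follow from Lemma~\ref{r2}, and care that the constant stays independent of $n$ and of $x$.
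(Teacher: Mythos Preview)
Your approach is essentially the paper's: fix $s$ and $m_2$ from Lemma~\ref{l6}, use Lemma~\ref{l6}(2) as the core estimate to bound $|f_{n+s}(x)-f_{n+s+1}(x)|$ by $O(1/p^{n+s})$ uniformly on the common compact support, then telescope. Two points where your sketch is looser than the paper's execution:

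First, the paper treats $x<1$ separately (part (A) of the proof), because Lemma~\ref{l6}(2) is indexed so that the smallest degree it reaches is $(q-m_2)q'$, i.e.\ it covers $x\gtrsim 1$. For $x<1$ one has $f_n(x)=\ell(M_{\lfloor xq\rfloor})/q^{d-1}$, and the bound $|f_n(x)-f_{n+1}(x)|\le \tilde C_2(M)/q$ comes directly from the Hilbert polynomial of $M$; this is elementary but must be done.

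Second, the shift and floor corrections you call ``Lipschitz estimates following from Lemma~\ref{r2}'' do not quite follow from Lemma~\ref{r2} alone. The paper makes this precise by constructing, for each residue $0\le n_1<q'$ and $0\le n_2<p$, explicit multiplication maps $f_{n_1}:M(-m_2q')\to M(n_1)$ and $h_{n_2,n_1}:M\to M(n_1p+n_2)$ by a fixed $s\in R_1$ avoiding the minimal primes, and then applies Lemma~\ref{l22} to each to get $|\ell(M/I^{[qq']}M)_{(m+q)q'+n_1}-\ell(M/I^{[qq']}M)_{(m+q-m_2)q'}|\le C(f_{n_1})(m+q)^{d-2}+\text{const}$, and similarly for the other shift. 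This is exactly the decomposition $|f_{n+s}(x)-f_{n+s+1}(x)|\le A_1(x)+A_2(x)+A_3(x)$ in the paper. The constants $C(f_{n_1}),C(h_{n_2,n_1})$ range over finitely many values (since $n_1<q'$, $n_2<p$ are bounded independently of $n$), so the maximum is a single constant depending only on $M$. Your instinct that this is ``organizational rather than conceptual'' is correct, but the tool is Lemma~\ref{l22}, not Lemma~\ref{r2} directly.
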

\begin{proof}
Note that $\dim~R \geq 2$. Therefore, for $X = \mbox{Proj}~R$, 
we have  $\dim~X \geq 1$. Let $\sM$ be the coherent sheaf of
 $\sO_X$-modules associated to $M$.

\vspace{10pt} 

\noindent{(A)~~ Let  $x<1 $}.

 If $x < 0$ then
 $f_n(x) = f_{n+1}(x) = 0$, for all $n \geq 1$.

 Let $ 0 \leq x < 1$. Then $m/{q}\leq x < (m+1)/q$, for some integer
 $ 0 \leq m < q$. Hence 
$$\frac{mp+n_1}{qp} \leq 
x < \frac{mp+n_1+1}{qp},~~~\mbox{for some integer}~~~~  0\leq n_1 < p,
~~ \mbox{with}~~~ 
mp+n_1 < qp.$$
Therefore, 
$f_n(x) = (1/q^{d-1})\ell(M_m)$ and $f_{n+1}(x) = 
(1/(qp)^{d-1})\ell(M_{mp+n_1})$.

If $m\leq {\tilde m}$ (${\tilde m}$ is defined for $M$ as in
 Notations~\ref{n1}) then 
$$|f_n(x) - f_{n+1}(x)| < \left|\frac{(\ell(M_0)+\cdots + 
\ell(M_{\tilde m})}{q^{d-1}} + \frac{(\ell(M_0)+\cdots + 
\ell(M_{{\tilde mp}+n_1})}{(qp)^{d-1}}\right| \leq 
\frac{2\sum_{0}^{{\tilde m}p+(p-1)}\ell(M_i)}{q^{d-1}}.$$
If $ q > m > {\tilde m}$ then (using Hilbert polynomials) 
$$\ell(M_m) = {\tilde e_0}{m}^{d-1} + 
{\tilde e_1}m^{d-2} + \cdots + {\tilde e}_{d-1}$$
$$\ell(M_{mp+n_1}) = {\tilde e_0}(mp+n_1)^{d-1} + 
{\tilde e_1}(mp+n_1)^{d-2} + \cdots + {\tilde e}_{d-1},$$
 for some rational  numbers 
 ${\tilde e_0}, \cdots, {\tilde e_{d-1}}$ which are invariant of $(\sM, \sO_X(1))$. 
In this case 
$$|f_n(x)-f_{n+1}(x)| \leq  \frac{(d-1){\tilde e_0}+|{\tilde e_1}|+\cdots +
|{\tilde e_{d-1}}|}{q}.$$

This implies that, for 
${\tilde C_2}(M) =  2\sum_{0}^{{\tilde m}p+(p-1)}\ell(M_i) + 
(d-1){\tilde e_0}+|{\tilde e_1}|+\cdots +
|{\tilde e_{d-1}}|$,
\begin{equation}\label{e27}
|f_n(x)-f_{n+1}(x)| \leq  \frac{{\tilde C_2}(M)}{q},
\quad\mbox{for all}\quad x <1\quad\mbox{for all}\quad n\geq 0.\end{equation}
\vspace{10pt} 

\noindent~~(B)~~Let $ x\geq 1$. 
We fix two integers $m_2$ and $q' = p^s$ (as in Lemma~\ref{l6})
such that we have  an exact sequence of sheaves of $\sO_X$-modules,
$$0\longto Q'\longto \oplus^{p^{d-1}}(F_*^s\sM)(-m_2)\longby{g}
 F_*^{s+1}\sM\longto Q''\longto 0.$$

Let $s\in R_1$ which avoids all minimal primes of the ring
$R$ (note that $R$ is a standard graded ring and $k$ is infinite). 
For $0\leq n_1 <q'$ and $0\leq n_2 <p$, we consider the  following exact sequences of 
graded $R$-modules
$$0\longto Q'_{n_1}\longto M(-m_2q')\longby{f_{n_1}}
 M(n_1)\longto  Q''_{n_1}\longto 0,$$
where $f_{n_1}$ is the multiplication  map given by 
$s^{n_1+m_2q'}$. 
This induces canonical 
exact sequences of 
sheaves of $\sO_X$-modules
$$0\longto \sQ'_{n_1}\longto \sM(-m_2q')\longby{f_{n_1}}
 \sM(n_1)\longto \sQ''_{n_1}\longto 0,$$
Similarly we have exact sequences of graded $R$-modules
$$0\longto K'_{n_2, n_1}\longto M\longby{h_{n_2,n_1}}
 M(n_1p+n_1)\longto K''_{n_2, n_1}\longto 0$$
where $h_{n_2, n_1}$  is the multiplication  map given by 
$s^{n_1p+n_2}$.
This induces exact sequences of 
sheaves of $\sO_X$-modules 
$$0\longto \sK'_{n_2, n_1}\longto \sM\longby{h_{n_2,n_1}}
 \sM(n_1p+n_1)\longto \sK''_{n_2, n_1}\longto 0.$$
By construction,  each of the sheaves $\sQ'$, $\sQ''$, 
$\sQ'_{n_1}$, $\sQ''_{n_1}$, $\sK'_{n_2,n_1}$ and $\sK''_{n_2,n_1}$, 
has support  of  dimension $<d-1$.

Let 
$${\tilde C_0}(M) = \max_{0\leq n_1 <q',~~~0\leq n_2 < p}
\left\{C(f_{n_1})+C_{Q'_{n_1}}+ C_{Q''_{n_1}}, C(g)+ 2C_M,
C(h_{n_2, n_1})+ C_{K'_{n_2, n_1}}+C_{K''_{n_2, n_1}}
\right\},$$ 
where $C(f_{n_1})$, $C(g)$ and $C(h_{n_2, n_1})$ are 
the constants (see Lemma~\ref{l22})  
associated to the maps $f_{n_1}$, $g$ and $h_{n_2, n_1}$ respectively.

Since $x\geq 1$, for given $q = p^n$, 
 there exists a unique integer $m\geq 0$, such that 
$ (m+ q)/q \leq x < (m+q+1)/q $. Therefore,  
for  $q'=p^s$  we have 
$$\frac{(m+q)q'+n_1}{qq'} \leq x <\frac{(m+q)q'+n_1+1}{qq'},
~~~~~~~\mbox{for some}~~~n_1 < q'$$
and 
$$\frac{(m+q)q'p+n_1p+n_2}{qq'p}
 \leq x < \frac{(m+q)q'p+n_1p+n_2+1}{qq'p},~~~~~~\mbox{for some}~~~ n_2 < p.$$
Hence, by definition 
$$f_{n+s}(x) = \frac{1}{(qq')^{d-1}}\ell\left(\frac{M}{I^{[qq']}M}\right)_{(m+q)q'+n_1}~~~\mbox{and}$$
$$f_{n+s+1}(x) = \frac{1}{(qq'p)^{d-1}}\ell\left(\frac{M}{I^{[qq'p]}M}\right)_{(m+q)q'p+n_1p+n_2}.$$

Let $m_{M}(q) = {\tilde m}+ n_0(\sum_id_i)q$ (defined as in Notations~\ref{n1}). If 
$m\geq m_{M}(q)$  then 
 we have  $mq'\geq m_{M}(qq')$ and 
$mq'p\geq m_{M}(qq'p)$. Therefore, by Lemma~\ref{r1}~(1), for $m\geq m_{M}(q)$, 
$$\ell\left(\frac{M}{I^{[qq']}M}\right)_{(m+q)q'
+n_1} = \ell\left(\frac{M}{I^{[qq'p]}M}\right)_{(m+q)q'p+n_1p+n_2} = 0,$$
which implies 
$|f_{n+s}(x)-f_{n+s+1}(x)| = 0$.

Therefore we can assume 
$m\leq m_M(q)$ and hence can assume that 
$(m+q)^{d-2} \leq L_0q^{d-2}$, where
 $L_0 = ({\tilde m}+ n_0(\sum_id_i)+1)^{d-2}$.

We have 
$$|f_{n+s}(x)-f_{n+s+1}(x)| = |f_{n+s}(\frac{(m+q)q'+n_1}{qq'})-
f_{n+s+1}(\frac{(m+q)q'p+n_1p+n_2}{qq'p})|.$$

Hence we have 
$$ |f_{n+s}(x)-f_{n+s+1}(x)|\leq  A_1(x)+ A_2(x) + A_3(x),$$
 where 
$$A_1(x) = |f_{n+s}(\frac{(m+q)q'+n_1}{qq'})-f_{n+s}(\frac{(m+q-m_2)q'}{qq'})|$$
$$A_2(x) = |f_{n+s}(\frac{(m+q-m_2)q'}{qq'}) - f_{n+s+1}(\frac{(m+q)q'p}{qq'p})|$$
$$A_3(x) = |f_{n+s+1}(\frac{(m+q)q'p}{qq'p}) -f_{n+s+1}(\frac{(m+q)q'p+n_1p+n_2}{qq'p}).$$

Now 
$$A_1(x) = \frac{1}{(qq')^{d-1}}\left|\ell\left(\frac{M}{I^{[qq']}M}\right)_{(m+q)q'+n_1}-
\ell\left(\frac{M}{I^{[qq']}M}\right)_{(m-m_2+q)q'}\right|$$
$$A_1(x) \leq 
\frac{C(f_{n_1})(m+q)^{d-2}+C(Q'_{n_1})+C(Q''_{n_1})}{(qq')^{d-1}}\leq 
\frac{1}{qq'}\frac{{\tilde C_0(M)} L_0}{(q')^{d-2}}.$$

$$A_2(x) = \frac{1}{(qq'p)^{d-1}}\left|p^{d-1}\ell\left(
\frac{M}{I^{[qq']}M}\right)_{(m+q-m_2)q'}-
\ell\left(\frac{M}{I^{[qq'p]}M}\right)_{(m+q)q'p}\right|$$
$$A_2(x)\leq \frac{C(g)(m+q)^{d-2}+2C_M}{(qq'p)^{d-1}} \leq 
\frac{{\tilde C_0(M)}L_0q^{d-2}}{(qq'p)^{d-1}}\leq
\frac{1}{qq'}\frac{{\tilde C_0(M)}L_0}{q'^{d-2}p^{d-1}}.$$

$$A_3(x) = \frac{1}{(qq'p)^{d-1}}\left|\ell\left(\frac{M}{I^{[qq'p]}M}
\right)_{(m+q)q'p}-
\ell\left(\frac{M}{I^{[qq'p]}M}\right)_{(m+q)q'p+n_1p+n_2}\right|$$
$$A_3(x)\leq \frac{C(h_{n_2,n_1})(mq'p+qq'p)^{d-2}+C_{K'_{n_2,n_1}}+ 
C_{K''_{n_2,n_1}}}{(qq'p)^{d-1}} 
\leq \frac{1}{qq'}\frac{{\tilde C_0(M)} L_0}{p}.$$

Therefore 
$$|f_{n+s}(x)-f_{n+s+1}(x)|\leq A_1(x)+A_2(x)+A_3(x)
\leq \frac{L_0}{qq'}\left[\frac{{\tilde C_0(M)}}{(q')^{d-2}}+
 \frac{{\tilde C_0(M)}}{q'^{d-2}p^{d-1}}
+\frac{{\tilde C_0(M)}}{p}\right].$$
Let
${\tilde C_1}(M) = 3L_0{\tilde C_0(M)}$.
In particular ${\tilde C_1}(M)$
is a constant (which depends only on $M$) such that 
\begin{equation}\label{e28} |f_{n+s}(x)-f_{n+s+1}(x)|\leq 
{\tilde C_1}(M)/(qq') = 
{\tilde C_1}(M)/p^{n+s},~~~\mbox{for all}~~~n\geq 0~~~
\mbox{and}~~~x\geq 1.
\end{equation}
Since 
$$|{\tilde C_1}(M)/p^{n_0} + {\tilde C_1}(M)/p^{n_0+1}+\cdots 
|\leq 2{\tilde C_1}(M)/p^{n_0},$$
for $C \geq 2{\tilde C_1}(M)$
we get 
 $$|f_n(x)-f_{n_1}(x)|\leq C/p^n,~~~\mbox{ for all}~~~~
 n_1 \geq n\geq n_0~~~\mbox{and for all}~~~x\geq 1.$$
Combining this with (\ref{e27}), we get that for any
 $C\geq 2{\tilde C_2}(M)+2{\tilde C_1}(M)$ 
 $$|f_n(x)-f_{n_1}(x)|\leq C/p^n,~~~\mbox{ for all}~~~~
 n_1 \geq n\geq n_0~~~\mbox{and for all}~~~x\in \R.$$
This proves the proposition.\end{proof}

\vspace{10pt}

\noindent{\bf Proof of Theorem~\ref{t1}}:\quad By  Remark~\ref{r11}, we may assume $k 
= {\bar k}$. For $n\in \N$, let $f_n:\R\longto \R$ and $g_n:\R\longto \R$ be
 functions as given in Definition~\ref{d1}.

\vspace{5pt}

\noindent{\bf Claim}\quad Both the sequences $\{f_n\}_n$ and $\{g_n\}_n$
converge uniformly and to the same limit function.

\vspace{5pt}

\noindent{\underline{Proof of the claim}:
Let $ q =p^n$ and $x\in \R$. If $x < 0$ then ${f_n}(x) = g_n(x) = 0$, for all
$n\geq 0$. 

Let $x \geq 0$ then 
  $x = (1-t)\frac{\lfloor xq\rfloor}{q}+t\frac{\lfloor xq\rfloor+1}{q} $,
 for some $t \in [0, 1)$.
Therefore 
$$f_n(x) = \frac{1}{q^{d-1}}\ell\left(\frac{M}{I^{[q]}M}
\right)_{\lfloor xq\rfloor}~~~\mbox{and}~~~
 g_n(x) = \frac{(1-t)}{q^{d-1}}\ell\left(\frac{M}{I^{[q]}M}\right)_{\lfloor xq\rfloor}+
\frac{t}{q^{d-1}} \ell\left(\frac{M}{I^{[q]}M}\right)_{\lfloor xq\rfloor+1}.$$

Let $$0\longto Q'\longto M(-1)\longby{f}M\longto Q''\longto 0,$$
be the exact sequence of graded $R$-modules where the map $f$ is given 
by multiplication by an element $s\in R_1$, By choosing such an $s$ which 
avoids all minimal primes of $M$, we ensure that support of each of $Q'$ and $Q''$ 
is of dimension $<d$.
If $$0\longto \sQ'\longto \sM(-1)\longby{f}\sM\longto \sQ''\longto 0,$$
is the associated  exact sequence of sheaves of $\sO_X$-modules then 
by Lemma~\ref{l6}~(2) and Lemma~\ref{r1}~(1),
$$|\ell\left(\frac{M}{I^{[q]}M}\right)_{\lfloor xq\rfloor}-
\ell\left(\frac{M}{I^{[q]}M}\right)_{\lfloor xq\rfloor+1}|\leq (C(g)+2C_M)
L_0q^{d-2} = {C_1}q^{d-2},$$ 
for all $n\geq 1$ and $x\geq 0$,
where  
$L_0= ({\tilde m}+n_0(\sum_i d_i)+1)^{d-2}$.

This implies, for all $n\geq 1$ and $x \geq 0$, we have,
$$ |f_n(x) - g_{n}(x)|
= 
\frac{t}{q^{d-1}}\left|\ell\left(\frac{M}{I^{[q]}M}\right)_{\lfloor xq\rfloor} - 
\ell\left(\frac{M}{I^{[q]}M}\right)_{\lfloor xq\rfloor+1}\right|
\leq  \frac{C_1}{p^n}. $$
By  Proposition~\ref{t2}, there is a constant $C$ depending on  $M$ and 
$n_0\in \N$ such that 
$$|f_n(x)-f_{n_1}(x)|\leq C/p^n,~~~\mbox{ for all}~~~~
 n_1 \geq n\geq n_0~~~\mbox{and for all}~~~x\in \R.$$
This implies,   
$$|g_n(x)-g_{n_1}(x)|\leq |g_n(x)-f_{n}(x)|+|f_n(x)-f_{n_1}(x)|+|f_{n_1}(x)-g_{n_1}(x)|
\leq \frac{C_1}{p^n}+\frac{C}{p^n}+\frac{C_1}{p^{n_1}}.$$
Therefore we have 
$$|f_n(x)-f_{n_1}(x)|, |f_n(x)-g_{n}(x)|, |g_n(x)-g_{n_1}(x)|\leq 
\frac{2C_1+C}{p^n},$$
for all $n_1\geq n\geq n_0$ and for all $x\in \R$.

Hence $\{f_n\}_n$ and $\{g_n\}_n$ are
 uniformly 
convergent sequences with the same limit. This proves the claim.

\vspace{5pt}
Let $f:\R\longto \R$ be the limit function given by
$$f(x) = \lim_{n\to \infty}{f_n}(x)dx = \lim_{n\to \infty}{g_n}(x)dx.$$

By the proof of Lemma~\ref{l1}, $g_n$ is a continous function with 
the support~$g_n \subseteq [0, (n_0\mu) +l/q]$. 
Therefore the function 
$f:\R\longto \R$
 is a continuous compactly supported real valued function such that
$\mbox{supp}~f\subseteq [0, n_0\mu]$.
 For $ q= p^n$ where $n\geq 1$, we can write
$$ \frac{1}{q^d}\ell(M/I^{[q]}M)=  
\frac{1}{q^d}\sum_{m\geq 0}\ell(M/I^{[q]}M)_{m} =
\quad\quad\quad\quad\quad\quad\quad\quad\quad\quad\quad\quad\quad\quad\quad $$
$$\int_{0}^{1/q}\frac{1}{q^{d-1}}\ell(M_0)dx + \cdots +
 \int_{1-\frac{1}{q}}^{1}\frac{1}{q^{d-1}}\ell(M_{q-1})dx +
\int_{1}^{1+\frac{1}{q}}\frac{1}{q^{d-1}}\ell(\frac{M}{I^{[q]}M})_{q}dx ~~~+
 \quad\quad\quad\quad\quad $$
$$ \int_{1+\frac{1}{q}}^{1+\frac{2}{q}}\frac{1}{q^{d-1}}\ell(\frac{M}{I^{[q]}M})_{q+1}    
dx+ \cdots +
\int_{n_0\mu-\frac{1}{q}}^{n_0\mu}\frac{1}{q^{d-1}}
\ell(\frac{M}{I^{[q]}M})_{n_0\mu q-1}dx
= \int_{0}^{n_0\mu}f_n(x)dx.$$
Therefore $$e_{HK}(M,I) = \lim_{n\to \infty}\int_{0}^{n_0\mu}f_n(x)dx.$$

But, as $\{f_n\}_{n\in \N}$ converges uniformly to $f$, we have 
$$ \lim_{n\to \infty}\int_{0}^{n_0\mu}f_n(x)dx =
 \int_0^{n_0\mu}\lim_{n\to \infty}f_n(x)dx = \int_{\R}f(x)dx .$$
This proves the theorem $\Box $

\vspace{10pt}

Having proved the existence of Hilbert-Kunz density function we are ready to check some 
properties of the function. 

\begin{rmk}\label{r777}Note that (as argued in the proof of the above Theorem)
the  support~$HKd(M, I) \subseteq [0, n_0\mu]$, where 
$n_0$ and $\mu$ are invariants depending on $I$ and $R$, as given in 
Notations~\ref{n1}~part~(3).
\end{rmk}

The first thing we note (Proposition~\ref{r77} below) is that, like HK multiplicity, 
the function 
$$HKd(-, I):\{\mbox{finitely generated graded}~R~\mbox{modules}\}\longto \sC^0_c(\R),$$
is additive, where $\sC^0_c(\R)$ denotes 
the set of continous compactly supported real valued functions. Hence can reduce 
various results  about a HK density function of a module 
to a HK density function of  an integral domain.
Corollary~\ref{cp2}, shows that the 
HKd function is a  multiplicative functor 
with respect to the Segre products on the set of graded $R$-modules.

\begin{propose}\label{r77}(Additive property)~~Let  $R$ be a standard graded ring of dimension $d\geq 2$ over 
a perfect field, and let $I\subset R$ be a homogeneous ideal of finite colength. Let 
$M$ be a finitely generated graded $R$ module. Let $\Lambda $ be the set of 
minimal prime ideals $P$ of $R$ such that $\dim~R/P = \dim~R$. Then 
$$HKd(M, I) = \sum_{P\in\Lambda}HKd(R/P, I)\lambda(M_P).$$ 
\end{propose}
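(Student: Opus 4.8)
The plan is to reduce, by means of a prime filtration of $M$, to the case $M=R/P$ with $P$ a homogeneous prime; the argument then separates into four ingredients: additivity of $HKd$ on short exact sequences, vanishing of $HKd(R/P,I)$ when $\dim R/P<d$, invariance of $HKd$ under a degree shift, and an elementary counting step. Throughout, write $X=\mathrm{Proj}~R$ (of dimension $d-1\ge 1$), and for a finitely generated graded module $N$ let $\sN$ be its associated coherent sheaf on $X$.

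\textbf{Additivity on $0\to M'\to M\to M''\to 0$.} Sheafifying gives an exact sequence $0\to\sM'\to\sM\to\sM''\to 0$, which stays exact after $\tensor F^{n*}V(m)$, after $\tensor\oplus_i\sO_X(q-qd_i+m)$, and after $\tensor\sO_X(q+m)$, since (\ref{e2}) is locally split and $V$ is locally free. From the long exact sequence (\ref{*}) one reads, for $\sQ\in\{\sM',\sM,\sM''\}$, that $\coker\phi_{m,q}(\sQ)$ is the dimension of the kernel of $H^1(F^{n*}V\tensor\sQ(m))\to\oplus_iH^1(\sQ(q-qd_i+m))$; chasing the morphism of these long exact sequences induced by $M'\to M\to M''$ yields
$$\bigl|\,\coker\phi_{m,q}(\sM)-\coker\phi_{m,q}(\sM')-\coker\phi_{m,q}(\sM'')\,\bigr|\le E(m,q),$$
where $E(m,q)$ is a finite sum of dimensions of kernels and cokernels of the comparison maps $H^{j}(\sM(N))\to H^{j}(\sM''(N))$ (and their $F^{n*}V$-twisted analogues) for the twists $N\in\{q+m\}\cup\{q-qd_i+m\}_i$ and $j\ge 0$, each bounded by some $h^{j}(\sM'(N))$ or $h^{j+1}(\sM'(N))$. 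The key point is that $E(m,q)=O(1)$, uniformly in $m$ and in $q=p^n$: for a twist $N\gg 0$ all these cohomologies vanish by the defining property of $\tilde m$ for $\sM'$; for $N\ll 0$ the relevant target $H^{0}(\sM''(N))$ vanishes; and only a bounded window of "middle" twists remains, on which each cohomology group is at most a constant. Since moreover $\coker\phi_{m,q}$ vanishes identically once $m+q\ge m_Q(q)$, the unbounded range of $m$ causes no trouble. Plugging this into Lemma~\ref{r2}(5) (which replaces each $\coker\phi_{m,q}(-)$ by $\ell(-/I^{[q]}(-))_{m+q}$ up to the constants $C_M,C_{M'},C_{M''}$) gives $\bigl|\ell(M/I^{[q]}M)_{m+q}-\ell(M'/I^{[q]}M')_{m+q}-\ell(M''/I^{[q]}M'')_{m+q}\bigr|=O(1)$; dividing by $q^{d-1}$ and letting $n\to\infty$ shows $HKd(M,I)=HKd(M',I)+HKd(M'',I)$.

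\textbf{Reduction and conclusion.} A fixed shift leaves $HKd$ unchanged: $\ell(N(a)/I^{[q]}N(a))_m=\ell(N/I^{[q]}N)_{m+a}$ and $(\lfloor xq\rfloor+a)/q\to x$, so continuity of the limit function and the uniform convergence in Theorem~\ref{t1} give $HKd(N(a),I)=HKd(N,I)$. If $\dim R/P<d$, then $\widetilde{R/P}$ has support of dimension $<d-1$, and Lemma~\ref{r2}(3),(5) bound $\ell((R/P)/I^{[q]}(R/P))_{m+q}$ by $O((m+q)^{d-2})$, so $HKd(R/P,I)=0$. Choosing a prime filtration $0=N_0\subset N_1\subset\cdots\subset N_t=M$ with $N_i/N_{i-1}\cong(R/P_i)(a_i)$, additivity together with the two facts above gives $HKd(M,I)=\sum_iHKd(R/P_i,I)=\sum_{i:\,P_i\in\Lambda}HKd(R/P_i,I)$. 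Finally, for $P\in\Lambda$ localize the filtration at the minimal prime $P$: every factor with $P_i\ne P$ dies (a strict containment $P_i\subsetneq P$ contradicts minimality of $P$, and $P_i\not\subseteq P$ makes the localization vanish), while $(R/P)_P$ has length $1$ over $R_P$, so $\#\{i:P_i=P\}=\ell_{R_P}(M_P)=\lambda(M_P)$; hence $HKd(M,I)=\sum_{P\in\Lambda}\#\{i:P_i=P\}\,HKd(R/P,I)=\sum_{P\in\Lambda}\lambda(M_P)\,HKd(R/P,I)$.

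\textbf{Main obstacle.} Everything but the short-exact-sequence additivity is routine; the heart of the proof is the uniform bound $E(m,q)=O(1)$, that is, checking that each connecting or comparison term in the morphism of long exact sequences is bounded independently of $m$ and $q$, by matching the range $m+q\le m_Q(q)$ in which $\coker\phi_{m,q}$ lives against the vanishing ranges $N\gg 0$ of $h^{j}(\sQ(N))$ and $N\ll 0$ of $h^{0}(\sQ(N))$. I expect this bookkeeping to be the delicate step. A more geometric alternative would compare $\sM$ directly with the sheaf associated to $\bigoplus_{i:\,P_i\in\Lambda}(R/P_i)(a_i)$ via a coherent map that is an isomorphism at the generic point of each top-dimensional component of $X$, and then invoke Lemma~\ref{l22}; there the subtle point is producing such a map (spreading out generic isomorphisms in the manner of the proof of Lemma~\ref{l4}), especially when $R$ is not generically reduced — where one would first pass to $X_{\rm red}$ by the $F^s_*$-device of Lemma~\ref{l6}.
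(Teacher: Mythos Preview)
Your prime-filtration strategy is different from the paper's, and the step you flag as delicate is in fact where the argument breaks. The claim $E(m,q)=O(1)$ is false. Take $R=k[x,y,z]$, $I=(x,y,z)$, $M'=(x)\cong R(-1)$, $M=R$, $M''=R/(x)$; the error module $(M'\cap I^{[q]}M)/I^{[q]}M'$ is $(x^{q-1},y^q,z^q)/(x^q,y^q,z^q)\cong \bigl(k[y,z]/(y^q,z^q)\bigr)(-(q-1))$, whose graded piece in degree $m$ has dimension up to $q$, not $O(1)$. The reason your ``bounded window of middle twists'' reasoning does not close the gap is that it applies only to the untwisted groups $h^{j}(\sM'(N))$; the snake-lemma error from the $F^{n*}V$-twisted row is controlled by $h^{1}(X,F^{n*}V\otimes\sM'(m))$, and Lemma~\ref{r2}(2) only gives this as $O(q^{\bar d})$ with $\bar d=\dim\operatorname{supp}\sM'$, which is $d-1$ when $M'$ is full-dimensional. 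So for a filtration step with both $N_{i-1}$ and $R/P_i$ of full dimension you cannot conclude additivity from your bound. (The right order of magnitude is $E(m,q)=O(q^{d-2})$, which would still suffice after dividing by $q^{d-1}$, but proving it needs an argument you have not supplied; it does not fall out of the bounds in Lemma~\ref{r2}.)

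The paper avoids short-exact-sequence additivity altogether and instead proves invariance of $HKd$ under \emph{generic isomorphisms} of sheaves (kernel and cokernel of dimension $<d-1$), which is exactly Lemma~\ref{l22}; this is enough because the required comparisons all have lower-dimensional discrepancy. Concretely, the paper passes to $\sN=F^{s}_{*}\sM$ (so that $\sN$ lives on $X_{\rm red}$), shows $HKd(\sN,I)=(p^{s})^{d-1}\,HKd(M,I)$ by the same cokernel comparison you used for shifts, and then produces two generic isomorphisms: first $\sN\to\bigoplus_i\sN|_{Y_i}$ over the irreducible components $Y_i$ of $X_{\rm red}$, and second $\bigoplus^{r_i}\sO_{Y_i}\to\sN|_{Y_i}(a)$ with $r_i=\ell(\sN_{x_i})=p^{s(d-1)}\lambda(M_{P_i})$. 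Applying Lemma~\ref{l22} at each step gives the formula directly. This is precisely the ``more geometric alternative'' you describe in your last paragraph; that alternative \emph{is} the proof, and your primary route does not go through without an additional estimate on the $F^{n*}V$-twisted connecting term.
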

\begin{proof} As usual, there is no loss of generality in assuming that the ground field 
$k$ is algebraically closed. 
Let $\sM$ be the sheaf of $\sO_X$-modules associated to $M$.
For $q=p^n$, recall 
$f_n(M)(x) = \frac{1}{q^{d-1}}\ell(\frac{M}{I^{[q]}M})_{m+q}$, where $\lfloor 
xq\rfloor = m+q$. Let 
${\tilde f_n}(\sM)(x) = \coker~\phi_{m,q}(\sM)/q^{d-1}$.
Note, by Lemma~\ref{r2}~(5), we have 
$$ \lim_{n\to \infty}{\tilde f_n}(M)(x) =  
\lim_{n\to \infty}{f_n}(M)(x)\quad\mbox{for all}\quad x\in \R.$$
Therefore, if $\sQ$ is a coherent sheaf on $X$ then 
we can define $HKd(\sQ, I) := HKd(Q, I)$, where $Q$ is any finitely generated 
graded $R$-module with $\sQ$ as  the associated sheaf of $\sO_X$-modules. 
Note that, due to  Remark~\ref{r777}, 
one can assume $(m+q)^{d-2}\leq (n_0\mu q)^{d-2}$. Therefore, it follows from 
Lemma~\ref{l22} that 
if $M'\longto M''$ is a generic isomorphism of $R$-modules 
({\it{i.e.}}, the kernel and cokernel of the map are of dimension $< \dim~R$) then 
$HKd(M', I) = HKd(M'', I)$. Similarly if
$\sM'\longto \sM''$ is a generic isomorphism of coherent sheaves of 
$\sO_X$-modules  then $HKd(\sM', I) = HKd(\sM'', I)$.

Now let $s\geq 0$ such that $(\mbox{nilradical}~R)^{p^s} = 0$. Define $\sN = F^s_*(\sM)$,
let $q' = p^s$.
Then $\sN$ is a coherent sheaves of $\sO_{X_{red}}$-modules.
Let 
$${C}(h) = \max_{0\leq n_1 <q'}
\left\{C(h_{n_1})\mid h_{n_1}:\sM\longto \sM(n_1)\right\},$$ 
where $h_{n_1}$ is a fixed generically isomorphic map of sheaves of $\sO_X$-modules and 
 $C(h_{n_1})$ is 
the constant (see Lemma~\ref{l22})  
associated to the map $h_{n_1}$ (note that since $k$ is infinite, we can 
always find such a map $h_{n_1}$, for each $n_1$).
Moreover (compare~(\ref{e16})
$$\coker\phi_{m,q}(\sN) = \coker\phi_{mq, qq'}(\sM),\quad\mbox{for all}\quad m\geq 0
\quad\mbox{and}\quad n\geq 0.$$
 
Therefore 
$$|\frac{1}{q'^{d-1}}{\tilde f_n}(\sN)(x)-{\tilde f}_{n+s}(\tilde \sM)|
= \frac{1}{(qq')^{d-1}}|\coker\phi_{mq, qq'}(\sM)-\coker\phi_{mq+n_1, qq'}(\sM)|$$
$$\leq \frac{C(h_n)(m+q)^{d-2}}{(qq')^{d-1}}\leq \frac{C(h)(n_0\mu)^{d-2}}{qq'}.$$
This implies 
$HKd(\sN, I)/(q')^{d-1} = HKd(M,I)$.

Let $Y_1, \ldots, Y_r$ be the irreducible reduced components of $Y = X_{red}$ 
corresponding to the primes ideals in the set 
$\Lambda = \{P_1, \ldots, P_r\}$. Let $x_1, \ldots, x_r$ denote the 
respective generic points in $Y$. Now the canonical generic isomorphism 
$\sN\longto \oplus_i\sN\mid_{Y_i}$
of sheaves of 
$\sO_Y$ (hence $\sO_X$-modules) gives
$$HKd(\sN, I) = \sum_{i=1}^r HKd(\sN\mid_{Y_i}, I).$$

Since $\sN_i = \sN\mid_{Y_i}$ is a coherent sheaf of $\sO_{Y_i}$-modules, there exists 
$a\geq 0$ such that $\sN_i(a)$ is globally generated (Theorem~5.17, Chapter~II in [Ha]), 
for all $i$. 
Hence, 
if $\rank~\sN_{x_i} = \rank~(\sN_{i})_{x_i} = r_i$ as
$\sO_{Y_i, x_i} = \sO_{Y,{x_i}}$-modules then 
there exists a generic isomorphism 
$\oplus^{r_i}\sO_{Y_i} \longto \sN_i(a)$
of $\sO_Y$-modules. Note that $\sN_i$ 
is generically isomorphic to $\sN_i(a)$. Therefore
$$HKd(\sN_i, I)= HKd(\sN_i(a), I) = 
HKd(\sO_{Y_i}, I)\ell(\sN_{x_i})$$
$$\implies HKd(\sN, I) = \sum^r_{i=1} 
HKd(\sO_{Y_i}, I)\ell(\sN_{x_i}) = (p^s)^{d-1}\sum^r_{i=1} 
\ell(M_{P_i})HKd(R/P_i, I).$$
$$\mbox{Therefore}\quad HKd(M, I) = \sum^r_{i=1} 
\ell(M_{P_i})HKd(R/P_i, I).$$ Hence the result.
\end{proof}

\begin{rmk}\label{r7} For $R$ and $I$ as above, in addition suppose 
$R$ equidimensional ring.
and   $I \subseteq J$ are two graded ideals of $R$. Then 
we claim:
$$HKd(R, I) \simeq HKd(R, J)~~\mbox{if and only if}~~ J\subseteq I^{\ast},$$ 
where 
$I^{\ast}$ denotes the {\em tight closure} of $I$ in $R$. To see this, 
 we use the following 
result by   [HH] and [A]: If $(R, {\bf m})$ is a formally unmixed 
local ring with  ${\bf m}$-primary ideals $I\subseteq J$ . Then $e_{HK}(I) =
 e_{HK}(J)$ if and only if $J\subseteq I^{\ast}$. 

Note that in the graded case, the completion ${\hat R}$ of $R$ with 
respect to $R_+$ is an  equidimensional local ring. Also it is easy to 
see that
the tight closure of a graded ideal is a graded ideal.
Now, if  $HKd(I) = HKd(J)$ then by Theorem~\ref{t1}, we have 
 $e_{HK}(I) = e_{HK}(J)$,
 therefore $e_{HK}({\hat I}) = e_{HK}({\hat J})$.
 By [HH] and [A], we have ${\hat J} \subseteq ({\hat I})^{\ast}
 \subset (I^{\ast})^{\wedge}$.
  Hence $J\subseteq I^{\ast}$.
Conversely 
$J\subseteq I^{\ast}$ implies that $e_{HK}(I) = e_{HK}(J)$. But then $HKd(I) \geq 
HKd(J)$ are continuous functions with the same integrals, which implies
$HKd(I) = HKd(J)$.
\end{rmk}

\begin{defn}\label{d2} Similar to the 
  HK density function for pair  $(R, {\bf m})$, where  $R$ is a  
standard graded ring $R$, of $\dim~R\geq 2$,
and ${\bf m}$ is  the graded maximal ideal, we can define the Hilbert-Samuel density 
function  
as 
$$HSd(R)(x) = F(x) = \lim_{n\to \infty}F_n(x),~~~~\mbox{where}~~~F_n(x) = 
\frac{1}{q^{d-1}}\ell(R_{\lfloor xq \rfloor}).$$
One can check that 
 $$F:\R\rightarrow \R~~\mbox{is given by}~~~ 
F(x) = 0,~~~\mbox{for}~~~ x < 0,~~~~\mbox{and}~~~ 
F(x) = e_0(R, {\bf m})x^{d-1}/(d-1)!,~~~~\mbox{ for}~~~ x\geq 0,$$
   where 
$e_0(R,{\bf m})$ is the Hilbert-Samuel multiplicity of $R$ with respect to 
${\bf m}$.

Note  that  
$$HKd(R,I)(x) = HSd(R)(x) = e_0(R, {\bf m})x^{d-1}/(d-1)!,~~~~\mbox{for all}~~~
 x <\min\{n\mid I_n\neq 0\},$$ in particular for all 
$x<1$. 
 \end{defn}

\begin{propose}\label{p2}Let $R_1, \ldots, R_r $ be standard graded 
rings of dimensions $\geq 2$, over an algebraically closed field $k$ of $\mbox{char}~p > 
0 $, with irrelevant maximal ideals ${\bf m}_1, \ldots, {\bf m}_r$ and 
 let  $I_1, \ldots, I_r$ be homogeneous ideals, respectively, such that
 $\ell(R_i/I_i) < \infty$. Let us denote  $HSd(R_i)(x) = {\tilde {F_i}(x)}$ and 
$HKd(R_i, I_i) = {\tilde {f_i}(x)}$. Then 
$$HKd(R_1 \# \cdots \# R_r, I_1 \# \cdots\# I_r)(x) = 
\prod_{i=1}^r{\tilde {F_i}(x)} -
 \prod_{i=1}^r\left({\tilde {F_i}(x)}-{\tilde {f_i}(x)}\right).$$
In particular
$$e_{HK}(R_1\#\cdots \# R_r, I_1\#\cdots\# I_r) = 
\int_0^{n_0\mu}\left\{ \prod_{i=1}^r{\tilde {F_i}(x)} -
 \prod_{i=1}^r\left({\tilde {F_i}(x)}-{\tilde {f_i}(x)}\right)\right\}dx,$$
where
 $R\# S$ denotes the  Segre product of  graded rings $R$ and $S$,
 given by $(R\#S)_n = R_n \tensor S_n$.
\end{propose}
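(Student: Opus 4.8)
The plan is to reduce to the case $r=2$ and then prove the Segre-product formula by relating the graded pieces of $(R\#S)/(I\#J)^{[q]}$ to a "Künneth-type" decomposition in each degree. First I would observe that the general $r$ follows from the case $r=2$ by an easy induction, since a Segre product is associative and $HSd(R\#S) = $ (by Definition~\ref{d2} applied to the Segre product, or directly to the product of the two defining functions) $\tilde F_1(x)\tilde F_2(x)$ for the leading behaviour, so that the inductive step is purely formal once the two-factor identity is established. For $r=2$, write $q=p^n$ and fix $x\ge 0$ with $m=\lfloor xq\rfloor$. The key combinatorial fact is that, because the Frobenius power of a Segre-product ideal is $(I\#J)^{[q]} = I^{[q]}\#J^{[q]}$ and because $(R\#S)_m = R_m\tensor_k S_m$, one has
$$\left(\frac{R\#S}{(I\#J)^{[q]}}\right)_m \;\cong\; \frac{R_m\tensor_k S_m}{(I^{[q]}\tensor S + R\tensor J^{[q]})_m}\;\cong\; \frac{R_m}{(I^{[q]})_m}\tensor_k \frac{S_m}{(J^{[q]})_m},$$
so that $\ell\big((R\#S)/(I\#J)^{[q]}\big)_m = \ell(R/I^{[q]})_m\cdot \ell(S/J^{[q]})_m$. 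Dividing by $q^{d_1+d_2-1}$ and writing each factor as $q^{d_i-1}$ times the relevant $f_n$-value, I would get the pointwise identity $f_n^{R\#S}(x)\cdot q^{d_1+d_2-1} = \big(q^{d_1-1}f_n^{R}(x)\big)\big(q^{d_2-1}f_n^{S}(x)\big)$ — but this is off by a factor $q$, which signals that the naive guess fails and that the correct statement must involve the \emph{Hilbert--Samuel} density functions $\tilde F_i$ as correction terms. Indeed the right computation is that in degree $m$ the colength of $(I^{[q]}\tensor S + R\tensor J^{[q]})$ inside $R_m\tensor S_m$ is $\dim R_m\cdot\dim S_m - \dim(I^{[q]})_m\cdot\dim(J^{[q]})_m$; writing $\dim(I^{[q]})_m = \dim R_m - \ell(R/I^{[q]})_m$ and similarly for $S$, and expanding, one obtains
$$\ell\!\left(\tfrac{R\#S}{(I\#J)^{[q]}}\right)_m = \dim R_m\cdot\ell(S/J^{[q]})_m + \dim S_m\cdot\ell(R/I^{[q]})_m - \ell(R/I^{[q]})_m\cdot\ell(S/J^{[q]})_m.$$

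Now I would divide by $q^{d_1+d_2-1}$ and take $n\to\infty$. Here I use that $\dim R_m/q^{d_1-1}\to \tilde F_1(x) = HSd(R_1)(x)$ (Definition~\ref{d2}), that $\ell(R/I^{[q]})_m/q^{d_1-1}\to \tilde f_1(x)$ by Theorem~\ref{t1}, and likewise for $S$; the cross terms have the correct total degree $q^{d_1-1}\cdot q^{d_2-1} = q^{d_1+d_2-2}$, and after accounting for the extra power of $q$ coming from the fact that $\dim X_{R\#S} = (d_1-1)+(d_2-1) = d_1+d_2-2$ so $\dim(R\#S) = d_1+d_2-1$, the three terms limit to $\tilde F_1\tilde f_2$, $\tilde f_1\tilde F_2$, and $-\tilde f_1\tilde f_2$ respectively. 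Summing,
$$HKd(R_1\#R_2, I_1\#I_2)(x) = \tilde F_1(x)\tilde f_2(x) + \tilde f_1(x)\tilde F_2(x) - \tilde f_1(x)\tilde f_2(x) = \tilde F_1\tilde F_2 - (\tilde F_1-\tilde f_1)(\tilde F_2-\tilde f_2),$$
which is exactly the asserted formula for $r=2$; since all four functions are compactly supported and continuous, the convergence is in fact uniform, so the limit function is the $HKd$ of the Segre product by Theorem~\ref{t1}. The displayed "In particular" statement about $e_{HK}$ then follows immediately by integrating over $\R$ (the support being contained in $[0,n_0\mu]$ by Remark~\ref{r777}) and invoking Theorem~\ref{t1} once more.

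The main obstacle I anticipate is justifying the degree-$m$ Künneth identity $\ell\big((R\#S)/(I\#J)^{[q]}\big)_m$ cleanly, i.e.\ checking that $(I_1\#I_2)^{[q]}$ really is $I_1^{[q]}\#I_2^{[q]}$ as graded ideals of $R_1\#R_2$ and that the subspace it cuts out in $R_{1,m}\tensor R_{2,m}$ is precisely $(I_1^{[q]})_m\tensor R_{2,m} + R_{1,m}\tensor (I_2^{[q]})_m$ — this requires that the generators of $I_1\#I_2$ behave well under Frobenius in the Segre grading, and one must be slightly careful because the Segre product is generated in degree $1$ by $R_{1,1}\tensor R_{2,1}$, so a degree-$q$ element of $(I_1\#I_2)^{[q]}$ is a sum of $(a\tensor b)^q\cdot(c\tensor d)$ with $a\tensor b$ a chosen generator; unwinding this to see it spans the claimed tensor-sum subspace in each graded piece is the delicate point. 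A secondary subtlety is keeping track of the normalizing powers of $q$ between the three density functions living on base varieties of different dimensions ($d_1-1$, $d_2-1$, and their sum); once the bookkeeping in Definition~\ref{d2} and Theorem~\ref{t1} is lined up this is routine, but it is the step most prone to an off-by-$q$ slip, as the naive first guess above illustrates.
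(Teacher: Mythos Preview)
Your approach is essentially the paper's: reduce to $r=2$ by induction, establish the degree-by-degree length identity, divide by the appropriate power of $q$, pass to the limit using the uniform convergence from Theorem~\ref{t1} (and of the $F_n$ on the compact support), and then integrate for the $e_{HK}$ statement.

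There is one genuine slip to correct. You identify $(I\#J)^{[q]}$ in degree $m$ with $(I^{[q]}\otimes S + R\otimes J^{[q]})_m$, and then assert that its colength in $R_m\otimes S_m$ equals $\dim R_m\cdot\dim S_m - \dim(I^{[q]})_m\cdot\dim(J^{[q]})_m$. These two statements are inconsistent: for subspaces $U\subseteq R_m$, $V\subseteq S_m$ the colength of $U\otimes S_m + R_m\otimes V$ is $(\dim R_m/U)(\dim S_m/V)$, which is precisely your discarded ``naive'' formula, not the three-term one you then use. The paper's convention is that $(I\#J)_m = I_m\otimes J_m$ (the Segre product of $I$ and $J$ as graded modules, which is easily checked to be an ideal of $R\#S$); with this description your colength formula $\dim R_m\dim S_m - \dim(I^{[q]})_m\dim(J^{[q]})_m$ is correct, and the three-term expansion follows exactly as you wrote. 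So your computation is right---only the description of the subspace needs to be changed. (Whether $((I\#J)^{[q]})_m$ literally equals $(I^{[q]})_m\otimes(J^{[q]})_m$ for all $m$ is the real point behind your ``main obstacle''; the paper also states this length identity without further comment.)

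Finally, the normalizing power is $q^{\dim(R\#S)-1} = q^{d_1+d_2-2}$, not $q^{d_1+d_2-1}$. Once you use this consistently there is no ``extra power of $q$'' to account for: each of the three terms on the right has total weight $q^{(d_1-1)+(d_2-1)}$ and limits cleanly to $\tilde F_1\tilde f_2$, $\tilde F_2\tilde f_1$, and $-\tilde f_1\tilde f_2$, exactly as in the paper.
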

\begin{proof}We prove the case $r = 2$, rest follows by induction.
Let $(R, {\bf m}_1)$ and $(S, {\bf m}_2)$ be two standard graded rings of dimension $d_1$ and $d_2$ 
respectively such that $I$ and $J$ are two  homogeneous ideals of $R$ and $S$
 reply with $\ell(R/I) < \infty $ and $\ell(S/J) < \infty $. Then
$$\ell\left(\frac{R\# S}{(I\# J)^{[q]}}\right)_{m+q} = 
\ell(R_{m+q})\ell(S_{m+q})- 
\left[\ell(R_{m+q})-\ell(\frac{R}{I^{[q]}})_{m+q}\right] 
\left[\ell(S_{m+q})-\ell(\frac{S}{J^{[q]}})_{m+q}\right]$$
$$ =  \ell(R_{m+q})\ell(S/J^{[q]})_{m+q}
+ \ell(S_{m+q})\ell(R/I^{[q]})_{m+q} -  
\ell(R/I^{[q]})_{m+q}\ell(S/J^{[q]})_{m+q}.$$
Let  $F(x)$ and $G(x)$ be HSd functions  of $R$ and $S$ respectively  and 
 let $f(x)$ and $g(x)$ be  HKd functions of $(R, I)$ and $(S,J)$ resply.
Then
$$\frac{1}{q^{d_1+d_2-2}}\ell\left(R\# S/(I\# J)^{[q]}\right)_{\lfloor xq\rfloor} = 
F_n(x)g_n(x) + G_n(x)f_n(x) - f_n(x)g_n(x).$$
If $n_0\geq 1$ is such that ${\bf m}_1^{n_0} \subseteq I$ and
 ${\bf m}_2^{n_0} \subseteq J$, for graded maximal ideal ${\bf m}_1$
 and ${\bf m}_2$
 of $R$ and $S$ resply and $\mu \geq \mu(I)$ and $\mu(J)$ then,  
 by Lemma~\ref{l1}, $F_n(x)g_n(x)$, $G_n(x)g_n(x)$ $f_n(x)g_n(x)$ are 
bounded real valued function with support in the interval $[0, n_0\mu]$.
Moreover, by Theorem~\ref{t1}, $f_n(x)$ and $g_n(x)$ converge 
uniformly to $f(x)$ and $g(x)$ resply. It is obvious that, 
on any compact interval,  
the functions $F_n(x)$ and $G_n(x)$ 
converge uniformly to $F(x)$ and $G(x)$ reply.
Therefore  $F_n(x)g_n(x) + G_n(x)f_n(x) - f_n(x)g_n(x)$
 converge uniformly to $F(x)G(x)+G(x)f(x)-f(x)g(x)$ and 
$$HKd~(R\# S, I\# J) = F(x)g(x) + G(x)f(x) - f(x)g(x).$$
This implies that 
$$e_{HK}(R\# S. I\# J) = \frac{e_0(R)}{(d_1-1)!}
\int_0^{n_0\mu}x^{d_1-1}g(x)dx + \frac{e_0(S)}{(d_2-1)!}
\int_0^{n_0\mu}x^{d_2-1}f(x)dx  
-\int_0^{n_0\mu}f(x)g(x)dx.$$

This proves the proposition.
\end{proof}

\begin{cor}\label{cp2}{(Multiplicative property)}.~~For pairs $(R, I)$ and
$(S, J)$ with $\mbox{dim}~R = d_1$ and $\mbox{dim}~S = d_2$, if 
$F(x)$ and $G(x)$  denote HSd functions  of $R$ and $S$ respectively  as 
given in Definition~\ref{d2}
then we have 
$$F_{R\#S}- HKd(R\# S, I\#J) = \left[F_R-HKd(R,I)\right]\cdot\left[F_S-HKd(S, J)\right].$$
\end{cor}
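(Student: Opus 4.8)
The plan is to deduce Corollary~\ref{cp2} from Proposition~\ref{p2} by a purely formal rearrangement, once the Hilbert--Samuel density function of a Segre product has been identified with the product of the Hilbert--Samuel density functions of the factors. So the work splits into one genuine verification and one line of algebra.

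First I would establish that $F_{R\#S} = F_R\cdot F_S$. For this, note that $\dim(R\#S) = d_1 + d_2 - 1$ --- the standard dimension formula for Segre products of standard graded $k$-algebras, equivalently $\dim\,{\rm Proj}(R\#S) = \dim\,{\rm Proj}(R) + \dim\,{\rm Proj}(S)$ under the Segre embedding ${\rm Proj}(R\#S)\cong{\rm Proj}(R)\times{\rm Proj}(S)$. Since $(R\#S)_m = R_m\tensor_k S_m$ and these are finite-dimensional $k$-vector spaces, $\ell\big((R\#S)_m\big) = \ell(R_m)\,\ell(S_m)$, and hence with $q = p^n$ and $d = \dim(R\#S) = (d_1-1)+(d_2-1)+1$,
$$\frac{1}{q^{d-1}}\ell\big((R\#S)_{\lfloor xq\rfloor}\big) = \frac{1}{q^{d_1-1}}\ell\big(R_{\lfloor xq\rfloor}\big)\cdot\frac{1}{q^{d_2-1}}\ell\big(S_{\lfloor xq\rfloor}\big).$$
Letting $n\to\infty$, the two factors on the right converge (pointwise, in fact locally uniformly) to $F_R$ and $F_S$ by Definition~\ref{d2}, so $F_{R\#S} = F_R F_S$ as functions on $\R$.

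With this identity in hand, the corollary is immediate from the case $r=2$ of Proposition~\ref{p2}. Writing $F = F_R$, $G = F_S$, $f = HKd(R,I)$, $g = HKd(S,J)$, Proposition~\ref{p2} asserts $HKd(R\#S, I\#J) = FG - (F-f)(G-g)$, so
$$F_{R\#S} - HKd(R\#S, I\#J) = FG - \big[FG - (F-f)(G-g)\big] = (F-f)(G-g) = \big[F_R - HKd(R,I)\big]\cdot\big[F_S - HKd(S,J)\big],$$
which is exactly the claimed identity. I do not anticipate any real obstacle: every step is either a direct quotation of Proposition~\ref{p2} or elementary, and the only point demanding a moment's care is the multiplicativity $F_{R\#S} = F_R F_S$, which itself reduces to the dimension formula for Segre products together with the multiplicativity of $k$-dimension under $\tensor_k$ on graded pieces. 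For an arbitrary finite number of factors one would simply iterate this identity, in parallel with the inductive step in the proof of Proposition~\ref{p2}.
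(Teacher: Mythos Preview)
Your proof is correct and follows the same route as the paper, which simply says ``Follows from Proposition~\ref{p2}.'' You have just made explicit the one point the paper leaves implicit, namely $F_{R\#S}=F_R\cdot F_S$ (which is essentially already contained in the first line of the proof of Proposition~\ref{p2}, via $\ell((R\#S)_m)=\ell(R_m)\ell(S_m)$ together with the dimension count).
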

\begin{proof}Follows from Proposition~\ref{p2}.
\end{proof}
\vspace{5pt}

\begin{thm}\label{t*1}Let $R$ be a standard graded reduced ring of 
dimension $1$ and 
$I$ be a  homogeneous ideal of $R$ such that $\ell(R/I) < \infty $. 
Let $f_n(x) = \ell(R/I^{[q]})_{\lfloor xq\rfloor} $.
 Then $\{f_n(x)\}_{n\in \N}$ is a convergent (but need not be uniformly
convergent) sequence, 
for every $x\in [0, \infty)$  and  $$e_{HK}(I, R) = \int_{\R}f(x)dx,$$
where  $f(x) = \lim_{n\to \infty}f_n(x)$. 
\end{thm}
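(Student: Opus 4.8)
\textit{Proof proposal.} By Remark~\ref{r11} we may assume $k=\bar k$. The decisive simplification in dimension one is rigidity: $\dim_k R_m$ is eventually the constant $e_0(R)$, so $f_n(x)=\ell(R/I^{[q]})_{\lfloor xq\rfloor}\le\dim_k R_{\lfloor xq\rfloor}$ is bounded by one constant $B$ independent of $n$ and $x$, and by the proof of Lemma~\ref{l1} every $f_n$ vanishes outside the fixed interval $[0,n_0\mu]$. Thus $\{f_n\}$ is dominated by the integrable function $B\cdot\chi_{[0,n_0\mu]}$, and the two halves of the theorem decouple: $\int_{\R}f_n(x)\,dx=\tfrac1q\sum_{m\ge0}\ell(R/I^{[q]})_m=\tfrac1q\ell(R/I^{[q]})\to e_{HK}(R,I)$, so as soon as $f_n\to f$ pointwise, the dominated convergence theorem gives $\int_{\R}f(x)\,dx=e_{HK}(R,I)$. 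Since each $f_n(x)$ is a nonnegative integer, proving pointwise convergence means proving that for every fixed $x\ge0$ the sequence $\ell(R/I^{[q]})_{\lfloor xq\rfloor}$ is eventually constant in $n$.

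To see this I would pass to the normalization. As $R$ is reduced of dimension one over $\bar k$, $X={\rm Proj}~R$ consists of $r=e_0(R)$ reduced $k$-points, and each $R/P_j$ ($P_j$ a minimal prime) is a standard graded one-dimensional domain over $\bar k$ --- hence, ${\rm Proj}(R/P_j)$ being a single reduced $k$-point, a polynomial ring $k[s_j]$ with $\deg s_j=1$; so $R$ injects into $\bar R:=\prod_{j=1}^r k[s_j]$ (its normalization), with $\bar R/R$ of finite length, concentrated in degrees below some $m_2$. Over $\bar R$ things are explicit: the image $I_j$ of $I$ in $k[s_j]$ is $(s_j^{e_j})$, where $e_j$ is the least degree $d_i$ of a generator $h_i$ of $I$ not vanishing at the $j$-th point (finite colength forces $e_j$ finite), so $I^{[q]}\bar R=\prod_j(s_j^{e_jq})$ and $\ell(\bar R/I^{[q]}\bar R)_{\lfloor xq\rfloor}=\#\{\,j:\lfloor xq\rfloor<e_jq\,\}=\#\{\,j:e_j>x\,\}$, using that the $e_j$ are integers. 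Comparing degreewise, the image of $(I^{[q]})_m$ in $(R/P_j)_m=ks_j^m$ is $(I_j^{[q]})_m$; hence $(I^{[q]})_m$ is pinned between the coordinate subspaces $\prod_{j:\,e_jq+m_2\le m}ks_j^m$ and $\prod_{j:\,e_jq\le m}ks_j^m$, the only discrepancy coming from the finitely many degrees $<m_2$ where $R\neq\bar R$ and from a possible non-coordinate position of $(I^{[q]})_m$ inside the factors $j$ with $e_jq\le m<e_jq+m_2$. Putting $m=\lfloor xq\rfloor$ with $x$ fixed, for every $x\notin\{e_1,\dots,e_r\}$ with $x>0$ this interfering set of factors is empty once $q$ is large, so $\ell(R/I^{[q]})_{\lfloor xq\rfloor}=\#\{\,j:e_j>x\,\}$ for $q\gg0$; and $f_n(0)=\dim_k R_0=1$ is already constant.

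It remains to handle the finitely many $x=e_{j_0}$, where $\lfloor xq\rfloor=e_{j_0}q$ on the nose and $(I^{[q]})_{e_{j_0}q}$ genuinely need not be a coordinate subspace. Only generators $h_i$ of degree $d_i\le e_{j_0}$ survive into degree $e_{j_0}q$; for $q$ large those with $d_i<e_{j_0}$ already span the coordinate subspace indexed by all $l$ with $e_l<e_{j_0}$, while each $h_i$ with $d_i=e_{j_0}$ contributes the vector whose $l$-th entry (over $l$ with $e_l=e_{j_0}$) is $c_{il}^q$, the $q$-th power of the leading coefficient of $h_i$ at the $l$-th point. Therefore $\ell(R/I^{[q]})_{e_{j_0}q}=\#\{\,l:e_l\ge e_{j_0}\,\}-\rank_k\big((c_{il}^q)_{\,d_i=e_{j_0},\ e_l=e_{j_0}}\big)$, and since the Frobenius $c\mapsto c^q$ is a field homomorphism of $k$, hence injective, it leaves the rank of a matrix unchanged; so this equals $\#\{\,l:e_l\ge e_{j_0}\,\}-\rank_k\big((c_{il})\big)$, independent of $n$. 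This gives pointwise (indeed eventually constant) convergence everywhere, and hence, by the first paragraph, the integral formula.

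The one real difficulty is this last step: at a jump point the cohomological machinery of Section~2 degenerates (its estimate in dimension one is only $O(1)$, not $o(1)$) and gives nothing, so one must carry out the explicit generator-by-generator count and recognize that the relevant rank is Frobenius-invariant; a secondary chore is the bookkeeping in the second paragraph that keeps the saturation defect (degrees $<m_2$) from contaminating the squeeze at the non-exceptional $x$. This same jump phenomenon is what prevents uniform convergence: the limit $f$ is a genuine step function, while each $f_n$ is a step function whose plateaus have width $1/q$, so $f_n$ cannot follow a jump of $f$ uniformly --- concretely, for $R=k[x,y]/(xy)$ and $I=(x+y)$ one has $f_n\equiv1$ on $[1,1+1/q)$ but $f\equiv0$ on $(1,1+1/q)$, so $\sup_x|f_n(x)-f(x)|\ge1$ for all $n$.
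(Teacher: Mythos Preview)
Your proof is correct, but it takes a genuinely different route from the paper's. The paper never passes to the normalization: instead it chooses a nonzerodivisor $a\in R_1$ and shows directly that the map $R_m\to R_{mp+n_1}$, $y\mapsto a^{n_1}y^p$, is a $k$-isomorphism once $m\ge m_0$ (where $m_0$ is the threshold beyond which $\dim_kR_m$ is constant), and that it carries $(I^{[q]})_m$ onto $(I^{[qp]})_{mp+n_1}$ provided $m$ avoids the short intervals $(d_iq,\,d_iq+m_0]$. This yields $f_n(x)=f_{n+1}(x)$ whenever $x$ lies outside the shrinking set $T_n=(0,m_0/q]\cup\bigcup_i(d_i,\,d_i+m_0/q]$, and then a direct estimate $|\int f_n-\int f|\le L_2(R)\cdot|T_n|$ replaces your appeal to dominated convergence. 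So the paper's argument is shorter and entirely intrinsic to $R$, with no need to identify $\bar R$ or to single out the jump points for special treatment; in particular, the values $x=d_i$ cause no trouble because $T_n$ is open on the left. What your approach buys in return is an explicit description of the limit: you actually compute $f(x)=\#\{j:e_j>x\}$ away from the jumps and pin down the value at each $e_{j_0}$ via the Frobenius-invariant rank, which is more information than the paper's proof extracts. Your example for non-uniform convergence is also sharper than the paper's bare assertion in Remark~\ref{r3}.
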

\begin{proof} Let $h_1, \ldots, h_{\mu}$ be a set of homogeneous generators of $I$ of degree
$d_1,\ldots, d_{\mu}$ such that $d_0 =0  < d_1 \leq d_2\leq \ldots \leq d_{\mu}$. 

Since $R$ is a $1$ dimensional ring there exists an integer 
 $m_0\geq 1$ such that 
 $\ell(R_m) = \ell(R_{m+1})$, for all $m\geq m_0$.
For $n\in \N$, we define
$$T_n = \left(0, m_0/q\right]\cup (d_1, d_1+m_0/q]\cup \cdots 
\cup (d_{\mu}, d_{\mu}+m_0/q]\subseteq (0, \infty].$$

\noindent{Claim}.\quad 
If $x \notin T_n $ then $f_n(x) = f_{n+l}(x)$, for all
$l\geq 0$.

\vspace{5pt}

\noindent{Proof of the claim}: Since $T_{n+l}\subseteq T_n$, for all $l\geq 0$, 
it is enough to prove that $x\notin T_n$ implies 
 $f_n(x) = f_{n+1}(x)$.
Note that $x\nin T_n$ then 
$$m = \lfloor xq\rfloor \notin 
 (0, m_0)\cup (d_1q, d_1q+m_0) \cup \cdots  \cup (d_{\mu}q, d_{\mu}q +m_0).$$
By definition 
$$f_n(x) = \ell(R/I^{[q]})_{m}~~~\mbox{and}~~~
 f_{n+1}(x) = 
\ell(R/I^{[qp]})_{mp+n_1},$$
where $\lfloor xpq\rfloor = \lfloor xq\rfloor p+ n_1$, for some  
$0\leq n_1 < p$.
Choose a nonzero divisor $a\in R_1$. Then we have the injective map
$R_m \rightarrow R_{mp+n_1}$ given by $y\mapsto a^{n_1}y^p$ (this is a
 composition of two maps namely
 $R_{m} \rightarrow R_{mp}$, given by $y\mapsto y^p$, and 
$R_{mp} \rightarrow R_{mp+n_1}$, given by $x\mapsto a^{n_1}x$)
which is an isomprphism (as $k$-vectorspaces) as $m  =
\lfloor xq \rfloor  \geq m_0$. This gives a canonical
 surjective map
$ \phi: (R/I^{[q]})_m \longrightarrow(R/I^{[qp]})_{mp+n_1}$.
Now to prove the claim, it is enough to prove that 
 $(I^{[qp]})_{mp+n_1} \subseteq \phi(I^{[q]}_m)$. 
 Let $f\in (I^{[qp]})_{mp+n_1}$
 then $f = h_1^{qp}r_1+\cdots +h_{\mu}^{qp}r_{\mu}$, where $\deg~r_j = 
mp+n_1-d_jqp$.

If $r_j \neq 0 \implies mp+n_1-d_jqp \geq 0 \implies 
m-d_jq\geq -n_1/p \implies    m-d_jq \geq 0 \implies 
xq \geq d_jq$.
\begin{enumerate}
\item $xq = d_jq $ then $n_1 = 0$ and $mp-d_jqp = 0$. Therefore $r_j\in R_0 = k$.
 Hence $r_j = l_j^p$, for some $l_j \in R_0$.
\item $xq > d_jq  \implies m \geq d_jq+m_0$. but then $\deg~r_j 
\geq mp - d_jqp+n_1 \geq m_0p+n_1$. Therefore $r_j = l_j^pa^{n_1}$, 
for some $l_j \in d_jq+m$. 
\end{enumerate}

This implies  
$f 
 = (h_1^{q}l_1+\cdots + h_{\mu}^{q}l_{\mu})^pa^{n_1} \in  
\phi(I^{[q]}_{m})$.
This proves the claim.

 Define $f(x) = \lim_{n\to \infty}f_n(x)$; this makes sense because 
\begin{enumerate}
\item if $x = 0$ then $f_n(0) = \ell(R_0)$, for all $n$.
\item If $ x> 0$ then there exists $n >0$ such that $x\notin T_n$, which
 implies that  
$f_n(x) = f_{n+1}(x) = \cdots = f(x)$.\end{enumerate}
Moreover, for $y\in \R$, $f_n(y) \leq L_2(R)$, where  
$L_2(R) = \mbox{max}\{ \ell(R_0), \ell(R_1), \ldots, 
\ell(R_{m_0})\}.$
Therefore we have 
$$|\int_{\R}f_n(x)dx-\int_{\R}f(x)dx| \leq \int_{\R}|f_n(x)-f(x)|dx \leq 
\int_{T_n}|f_n(x)-f(x)|dx \leq L_2(R)(\mu+1)m_0/q.$$
Hence 
$$\lim_{n\to \infty}\int_{\R}f_n(x)dx = \int_{\R}f(x)dx = 
\int_{\R}\left(\lim_{n\to \infty}f_n(x)\right)dx.$$\end{proof}

\begin{rmk}\label{r3}It is easy to check that in the case of dimension $1$,
 $f_n\rightarrow f$ does not converge to $f$ uninformly.
\end{rmk}

\section{Examples}
\subsection{Projective spaces and their Segre products}
\begin{ex}\label{ex4}Let $X= {\mathbb P}^d_k$ and let $R= k[X_0, 
\ldots, X_d] = \oplus_mR_m$.
We denote the function $HKd(R, {\bf m})$ by $HKd(\P^d_k)$ and for a fixed 
$q = p^n$, we denote the map $\phi_{m, q}(R)$ by 
$\phi_m$ where we recall that $\phi_{m, q}(R): R_1^{[q]}\tensor R_m \rightarrow
 R_{m+q}$ is the canonical  multiplication map. 
 For $A_m= {{m+d}\choose{d}}$, it is obvious that   
 $$\coker~{\phi}_{tq+l} = A_{(t+1)q+l}-A_1\coker~\phi_{(t-1)q+l}+
A_2\coker~\phi_{(t-2)q+l}+\cdots + (-1)^{t+1}A_{t+1}\coker~\phi_{l-q}.$$

Now, for $q=p^n$,  
$$f_n(x) = \coker\phi_{tq+l},~~\mbox{ where}~~~ 
\frac{(t+1)q+l}{q} \leq x < \frac{((t+1)q+l+1)}{q}~~~\mbox{ with}~~~ 0\leq l 
<q.$$
Hence 
 $$f_n(x) = ({1}/{q^d})A_{(t+1)q+l}-A_1f_n(x-1)+\cdots+
(-1)^{t+1}A_{t+1}f_n(x-t-1).$$ 
Moreover $f_n(x) = 0$, if $x\geq d+1$.
Therefore
$$HKd(\P^d_k)(x) = f(x) = \frac{1}{d!}\left[x^d-{\tilde A_1}(x-1)^d + 
{\tilde A_2}(x-2)^d + \cdots
+ (-1)^{t+1}{\tilde A_{t+1}}(x-t-1)^d\right],$$
where ${\tilde A_1} = (d+1)$ and ${\tilde A_2} = {{d+1}\choose{2}}$ and 
${\tilde A_{i+1}}$ are defined iteratively as  
$${\tilde A_{i+1}}= A_1{\tilde A_i}-A_2{\tilde A_{i-1}}+ \cdots +
y(-1)^{i-1}A_{i}{\tilde A_1}+(-1)^iA_{i+1}.$$
This implies 
${\tilde A_i} = {{d+1}\choose{i}}$, for $1\leq i\leq d$.
In particular
$$\begin{array}{lcl}
HKd(\P^d_k)(x) & = & x^d/d!,~~~\mbox{for}~~0\leq x <1\\
 & = & x^d/d!- A_i^d(x),~~~~~~\mbox{for}~~~~i\leq x 
< i+1\quad\quad~\mbox{provided}\quad
1\leq i\leq d\\
& = & 0 \quad\quad\mbox{otherwise},\end{array}
$$
where 
$$A_i^{d}(x) = \frac{1}{d!}\left[{{d+1}\choose{1}}(x-1)^d+\cdots +(-1)^{i+1}   
{{d+1}\choose{i}}(x-i)^d\right].$$
 Moreover $HSd(\P^d_k)(x) = x^d/d!$.

Therefore for the Segre product $\P^d_k\#\P^e_k$, where $d\leq e$,  
we have
$$\begin{array}{lcl}
HKd(\P^d_k\#\P^e_k)(x) & = & \displaystyle{\frac{x^dx^e}{d!e!} - 
A^d_i(x)A^e_i(x)}\quad\quad~~~~~~\mbox{for}~~\quad i\leq x < i+1,
\quad\mbox{provided}\quad 1\leq i\leq d\\
 & = & \displaystyle{\frac{x^dx^e}{d!e!}-\frac{x^d}{d!}A^e_i(x)},
\quad\quad~~~~~\mbox{for}\quad\quad~~d\leq x < e\\
&  = & 0,\quad\quad\mbox{for}~~\quad e\leq x.\end{array}$$
\end{ex}

\begin{rmk}Similar (but more complicated) formulas can be obtained for arbitrary Segre 
products of projective spaces. 
The Hilbert-Kunz multiplicity of the Segre product of
 $\P_k^n\times \P_k^m$ has been computed by [EY].
\end{rmk}

\subsection{Projective curves and their Segre products}

\begin{ex}\label{ex1}Let $R$ be a Noetherian standard graded ring of 
dimension $2$.
 Then, for a pair $(R, {\bf m})$, where ${\bf m}$ is the graded maximal ideal,
 $e_{HK}(R, {\bf m})$ has
 been computed in [B] and [T1], Here  we compute  
$HKd(R, {\bf m}) = f:\R \longto \R$ using the  similar techniques  used in
these two papers. 

Recall that if $x\in [0, 1)$ then
 $$ f_n(x) = \frac{1}{q} \ell(R_m),~~~\mbox{where}~~ m/q\leq x < m+1/q.$$
 This implies that $HKd(R, {\bf m})(x) = f(x) = \lim_{n\mapsto \infty}
 f_n(x) = (d)(x)$, 
where $d := e_0(R,{\bf m})$ is the Hilbert-Samuel multiplicity of $R$ with
 respect to the graded maximal ideal ${\bf m}$.

Now let $1\leq x$ then $(m+q)/q\leq x < (m+q+1)/q$, for some $m\geq 0$, and
$$f_n(x) = \frac{1}{q}\ell(R/{\bf m}^{[q]})_{m+q} = 
\frac{1}{q}\ell(R/{\bf m}^{[q]})_{\lfloor xq\rfloor}.$$
Let $h_1, \ldots, h_s\in R_1$ be a set of generators of ${\bf m}$ and let 
$$0\longto V\longto \oplus\sO_X\longto \sO_X(1)\longto 0$$
be the map of locally free sheaves of $\sO_X$-modules.
By Lemma~\ref{r1},~Part(2), it follows that 
$$HKd(R, {\bf m})(x)= f(x) = \lim_{n\mapsto \infty}\frac{1}{q}h^1(X, F^{n*}V({\lfloor (x-1)q\rfloor})).$$
By Theorem~2.7 in [L], there exists $n_1 >>0$ such that 
$$ 0 = E_0 \subset E_1 \subset \cdots \subset E_l \subset E_{l+1} \subset 
F^{n_1*}V$$ 
is the strong Harder-Narasimhan filtration of $F^{n_1*}V$.
Let 
\begin{equation}\label{e14}
a_i= \mu_i(F^{n_1*}V)/ p^{n_1} = \mu(E_i/E_{i-1})/p^{n_1}, 
~~~~r_i = \rank(E_i/E_{i-1})\end{equation}
 be the normalized HN slope of $V$.
 Note that $a_i's$ are independent of the choice of $n_1$ as 
$\mu_i(F^{n*}V) = p^{n-n_1}\mu_i(F^{n_1*}V)$, for all $n\geq n_1$.
Since $V\into \oplus\sO_X$, 
$a_i(V) \leq 0$.
In fact 
$$-\frac{a_1}{d}  < -\frac{a_2}{d} <  \cdots < -\frac{a_{l+1}}{d}.$$

Moreover,
we can take $n_1>>0$ such that 
$$\mu_i(F^{n_1*}V)-\mu_{i+1}(F^{n_1*}V) \geq 2g-2.$$
Therefore $$h^1(X, F^{n*}V(m)) = \sum_{i=1}^{l+1}h^1(X, F^{n-n_1*}(E_i/E_{i-1})(m)).$$
 and 
$$-\frac{a_1q}{d}  < -\frac{a_1q}{d} +(d-3) < -\frac{a_2q}{d} <-\frac{a_2q}{d} +(d-3)
< \cdots < -\frac{a_{l+1}q}{d}.$$
Hence, we have
$$\begin{array}{lcl}
 0 \leq m < -\frac{a_1q}{d} & \implies & f_n(x) 
= - \frac{1}{q} \sum_{i = 1}^{l+1} 
 (a_iqr_i+r_idm + r_i(g-1))\\ 
-\frac{a_iq}{d} \leq m < -\frac{a_iq}{d} + (d-3) & \implies & 
f_n(x) = -\frac{1}{q}\sum_{j = i+1}^{l+1} 
 (a_jqr_j+r_jd{m} + r_j(g-1)) + \frac{C_i}{q}\\ 
-\frac{a_iq}{d} \leq m < -\frac{a_{i+1}q}{d} & \implies & 
f_n(x) = -\frac{1}{q} \sum_{j = i+1}^{l+1} 
 (a_jqr_j+r_jd{m} + r_j(g-1)),
\end{array}$$
where $|C_i| \leq r_i(g(X)-1)$. 

Therefore 
 $$\begin{array}{lcl}
1 \leq x < 1-a_1/d & \implies & f(x) = -\sum_{i = 1}^{l+1} (a_ir_i+r_id(x-1))\\ 
1-a_1/d \leq x < 1-a_2/d & \implies & f(x) = 
-\sum_{i = 2}^{l+1} (a_ir_i+r_id(x-1))\\ 
1-a_i/d  \leq x < 1-a_{i+1}/d  & \implies & f(x) = 
- \sum_{j = i+1}^{l+1} (a_jr_j+r_jd(x-1))
\end{array}$$

This implies $$e_{HK}(R, {\bf m}) = \int_{x = 0}^{1-a_{l+1}/d}f(x)dx =
 \int_{x = 0}^{1}f(x)dx + \int_{x = 1}^{1-a_1/d}f(x)dx + \cdots + \int_{x = 
1-a_l/d}^{1-a_{l+1}/d}f(x)dx$$
 $$ = d/2 - \int_{y = 0}^{-a_1/d}[a_1r_1 + 
(r_1d)y]dy - \int_{y = 0}^{-a_2/d}[a_2r_2 + (r_2d)y]dy + \cdots - \int_{y = 
0}^{-a_{l+1}/d}[a_{l+1}r_{l+1} + (r_{l+1}d)y]dy $$ Therefore 
$$ e_{HK}(R, {\bf m}) 
= \frac{d}{2} + \sum_{i =1}^{l+1}\frac{r_ia_i^2}{2d}.$$
\end{ex}

\begin{rmk} As $\{a_i\}_i$ are distinct numbers, the above 
formula for $f$ implies 
that $HKd(R, {\bf m})$ determines the data $(d, \{r_i\}_i, \{a_i\}_i)$.

Moreover, for a pair $(R, I)$, where $I$ is a graded ideal generated by
 homogeneous elements $h_1, \ldots, h_{\mu}$ of degrees $d_1\leq  \cdots
\leq  d_{\mu}$ respectively,
$$HKd(R, I)(x)= f(x)
 = \lim_{n\mapsto \infty}\frac{1}{q}h^1(X, F^{n*}V({\lfloor (x-1)q\rfloor}))
-  \lim_{n\mapsto \infty}\frac{1}{q}\sum_{i =1}^{\mu}
h^1(X,\sO_X({\lfloor xq\rfloor}-qd_i)).$$
It is easy to check that the second term
 is a piecewise linear polynomial 
(with rational coefficients) with singularities at distinct points of the set
$\{d_1, \ldots, d_{\mu}\}$ and support in $[0, d_{\mu}]$.
In particular, there exists rational numbers
$0 = x_0 < x_1 <\cdots < x_s \leq \max\{n_0\mu, d_{\mu}\}$
and linear polynomials  $q_i(x)\in \Q[x]$,
such that 
 $HKd(R,I)(x)  = q_i(x)$ if $x\in 
[x_i, x_{i+1}]$ and $HKd(R, I)(x)  =  0$ otherwise.
\end{rmk}

For the following corollary, we use the notation of Proposition~\ref{p2}.

\begin{cor}\label{c1}Any Segre product of projective curves has rational 
Hilbert-Kunz multiplicity. More precisely 
$e_{HK}(R_1\#\cdots \# R_r, I_1\#\cdots\# I_r)$ is a rational number,
 where $\dim~R_i = 2$, for each $i$.
\end{cor}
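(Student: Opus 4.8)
The strategy is to reduce, via Theorem~\ref{t1}, the computation of $e_{HK}(R_1\#\cdots\#R_r, I_1\#\cdots\#I_r)$ to integrating the associated Hilbert--Kunz density function, and then to read off from Proposition~\ref{p2} and Example~\ref{ex1} that this density function is a compactly supported function which, relative to a finite subdivision of $\R$ by rational points, is piecewise a polynomial with rational coefficients. First, since both $e_{HK}$ and each $HKd$ are unchanged under base change $R_j\rightsquigarrow R_j\tensor_k\bar k$ (Remark~\ref{r11}), we may assume $k=\bar k$, so that Proposition~\ref{p2} applies. Writing $\tilde F_j=HSd(R_j)$ and $\tilde f_j=HKd(R_j,I_j)$, Proposition~\ref{p2} gives
$$HKd(R_1\#\cdots\#R_r,\,I_1\#\cdots\#I_r)=\prod_{j=1}^r\tilde F_j-\prod_{j=1}^r\bigl(\tilde F_j-\tilde f_j\bigr).$$

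Next I would identify the shape of the two building blocks. Since $\dim R_j=2$, Definition~\ref{d2} gives $\tilde F_j(x)=0$ for $x<0$ and $\tilde F_j(x)=e_0(R_j,{\bf m}_j)\,x$ for $x\geq 0$, with $e_0(R_j,{\bf m}_j)\in\N$; in particular $\tilde F_j$ is piecewise linear with rational (indeed integer) coefficients relative to the partition $\{(-\infty,0],\,[0,\infty)\}$. By Example~\ref{ex1} and the remark following it, for each $j$ the function $\tilde f_j=HKd(R_j,I_j)$ is compactly supported, continuous and piecewise linear: there are rational numbers $0=x_0^{(j)}<x_1^{(j)}<\cdots<x_{s_j}^{(j)}$ and linear polynomials $q_i^{(j)}(x)\in\Q[x]$ with $\tilde f_j=q_i^{(j)}$ on $[x_i^{(j)},x_{i+1}^{(j)}]$ and $\tilde f_j=0$ off $[0,x_{s_j}^{(j)}]$. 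This is the step that carries the arithmetic content: rationality of the breakpoints and of the coefficients comes from the rationality of the ranks and of the normalized strong Harder--Narasimhan slopes of the syzygy bundle of $I_j$ on the curve ${\rm Proj}\,R_j$, together with $e_0(R_j)\in\N$.

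Now introduce the $\Q$-algebra $\mathcal{A}$ of all functions $h:\R\to\R$ for which there is a finite set of rational points $t_0<t_1<\cdots<t_N$ such that $h$ restricted to each of $(-\infty,t_0]$, $[t_i,t_{i+1}]$, $[t_N,\infty)$ agrees with a polynomial in $\Q[x]$. That $\mathcal{A}$ is closed under addition, scalar multiplication and multiplication is immediate: pass to the common refinement of the two subdivisions (whose breakpoints are again rational) and use that $\Q[x]$ is a ring. Both $\tilde F_j$ and $\tilde f_j$ lie in $\mathcal{A}$, hence so does $HKd(R_1\#\cdots\#R_r,I_1\#\cdots\#I_r)$ by the displayed formula. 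Moreover this function is compactly supported: for $x<0$ every $\tilde F_j$ and $\tilde f_j$ vanishes, and for $x$ large every $\tilde f_j$ vanishes, whence $\prod_j(\tilde F_j-\tilde f_j)=\prod_j\tilde F_j$ and the two products cancel (alternatively, invoke Remark~\ref{r777}). Consequently there are rationals $0=t_0<t_1<\cdots<t_N$ and polynomials $p_i(x)\in\Q[x]$ with $HKd(R_1\#\cdots\#R_r,I_1\#\cdots\#I_r)=p_i$ on $[t_i,t_{i+1}]$ and $=0$ off $[0,t_N]$. By Theorem~\ref{t1},
$$e_{HK}(R_1\#\cdots\#R_r,\,I_1\#\cdots\#I_r)=\int_{\R}HKd(R_1\#\cdots\#R_r,\,I_1\#\cdots\#I_r)(x)\,dx=\sum_{i=0}^{N-1}\int_{t_i}^{t_{i+1}}p_i(x)\,dx,$$
and each summand is rational, since an antiderivative of $p_i$ again has rational coefficients and $t_i,t_{i+1}\in\Q$; a finite sum of rationals is rational, which is the assertion.

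A word on what is delicate. Granting Example~\ref{ex1}, the argument above is essentially bookkeeping: the only genuine checks are that $\mathcal{A}$ is closed under products and that the cancellation at infinity makes the Segre density function compactly supported, both of which are routine. The actual difficulty --- rationality of the strong Harder--Narasimhan data of syzygy bundles on smooth projective curves, which is what makes the one-dimensional base case work --- is external to this corollary and already absorbed into Example~\ref{ex1}.
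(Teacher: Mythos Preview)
Your argument is correct and follows essentially the same route as the paper: use Proposition~\ref{p2} to express the Segre density function in terms of the $HSd(R_j)$ and $HKd(R_j,I_j)$, invoke Example~\ref{ex1} (and the remark after it) to see that each factor is piecewise linear with rational breakpoints and rational coefficients, and then integrate via Theorem~\ref{t1}. The paper's proof is terser---it simply fixes a common rational subdivision of a bounded interval on which every $HKd(R_i,I_i)$ is linear in $\Q[x]$ and every $HSd(R_i)$ is a rational polynomial, then cites Proposition~\ref{p2}---but your introduction of the $\Q$-algebra $\mathcal{A}$ just makes the same closure-under-products observation explicit.
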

\begin{proof} Let $n_0, \mu\geq 1$ such that ${\bf m}_i^{n_0}\subseteq I_i$ and 
$\mu \geq \mu(I_i)$, for all $i$, where ${\bf m}_i$ denotes the graded maximal
 ideal of $R_i$. Let ${\tilde d}$ be the maximum of the 
degree of the chosen generators of $I_i$ and $n_0\mu$. 
Now, the above calculation shows that, 
one can take a finite subdivision of the interval $[0, {\tilde d}]$
by rational points $t_i$, namely
$$ [0, {\tilde d}] = \bigcup_{1\leq i\leq m} [t_i, t_{i+1}],~~~~\mbox{where}~~~
t_1< t_2 <\ldots < t_m$$
such that each function $HKd(R_i, I_i)(x)$, on each such interval $[t_j, t_{j+1}]$,
 is a linear polynomial in $\Q[x]$. 
Note that each $HSd(R_i)(x)$  is a polynomial in $\Q[x]$ on whole of $[0, {\tilde d}]$.
 Therefore the assertion follows from Proposition~\ref{p2}.
\end{proof}

\begin{rmk}\label{r4} 
By the same reasoning, as in the above corollary, one can 
prove that the Hilbert-Kunz multiplicities  of  
the arbitrary Segre product of full flag varieties, $\P^n_k$, Hirzebruch surfaces
and projective curves etc. (over a fixed algebraically closed field $k$) are rational numbers.
\end{rmk}

\begin{ex}\label{ex2} Let $(R, {\bf m}_1)$ and $(S, {\bf m}_2)$ be
 two standard graded rings of 
dimension~$2$ with 
graded maximal ideals ${\bf m}_1$ and ${\bf m}_2$ respectively.
 Let $V_1$ and $V_2$ be corresponding syzygy bundles for $(R,{\bf m}_1)$ and 
$(S,{\bf m}_2)$
 with normalized slopes $a_1, a_2, \ldots, a_{i_3}$ and $b_1, \ldots, b_{j_2}$,
repectively.
 Let $X = Proj~R$ and $Y = Proj~S$ be of degree $d$ and $g$ resply.
If 
$$-\frac{a_1}{d}  < -\frac{a_2}{d} <  \cdots < -\frac{a_{i_1}}{d}
\leq -\frac{b_1}{g}  < -\frac{b_2}{g} <  \cdots < -\frac{b_{j_1}}{d}
\leq -\frac{a_{i_1+1}}{d}  <   \cdots < -\frac{a_{i_2}}{d}$$
 $$\leq 
-\frac{b_{j_1+1}}{g}  < -\frac{b_{j_1+2}}{g} <  \cdots < -\frac{b_{j_2}}{d}
\leq -\frac{a_{i_2+1}}{d}  <   \cdots < -\frac{a_{i_3}}{d},$$
then, for $x_i = a_i/d$ and $y_i = b_i/g$, we find (after some computation)
that 
$$e_{HK}(R\# S, {\bf m}_1\# {\bf m}_2) = \displaystyle{\frac{dg}{6} 
\left[2+ \sum_{j\geq 1}3s_jy_j^2-\sum_{j\geq 1} s_jy_j^3 +
\sum_{i\geq 1}3r_ix_i^2 - 
\sum_{i\geq 1}r_ix_i^3\right.}$$
$$+ \displaystyle{(\sum_{j\geq 1}3s_jy_j)(r_1x_1^2+\cdots r_{i_1}x_{i_1}^2) -
(\sum_{j\geq 1}s_j)(r_1x_1^3+\cdots r_{i_1}x_{i_1}^3)}$$
$$+\displaystyle{(\sum_{i\geq i_1+1}3r_ix_i)(s_1y_1^2 + \cdots + 
s_{j_1}y_{j_1}^2) -
(\sum_{i \geq i_1+1}r_i)(s_1y_1^3 + \cdots + s_{j_1}y_{j_1}^3)}$$
$$+\displaystyle{(\sum_{j\geq j_1+1}3s_jy_j)(r_{i_1+1}x_{i_1+1}^2+\cdots + 
r_{i_2}x_{i_2}^2)
-(\sum_{j\geq j_1+1}s_j)(r_{i_1+1}x_{i_1+1}^3+\cdots + r_{i_2}x_{i_2}^3)}$$
$$\displaystyle{\left.+(\sum_{i\geq i_2+1}3r_ix_i)(s_{j_1+1}y_{j_1+1}^2 + 
\cdots + s_{j_2}y_{j_2}^2) -
(\sum_{i\geq i_2+1}r_i)(s_{j_1+1}y_{j_1+1}^3 +
 \cdots + s_{j_2}y_{j_2}^3)\right]}$$
Note that every term of the first row is nonnegative, whereas, 
in the  second to last rows, all the first terms  are nonpositive and all the 
second terms
 are nonnegative.
\end{ex}

\end{document}